\documentclass{amsart}

\usepackage{cancel}

\usepackage{amssymb}
\usepackage{amsthm}
\usepackage{xcolor}
\usepackage{mathtools}
\usepackage{hyperref}
\usepackage{picture}
\usepackage{mathrsfs}
\usepackage[normalem]{ulem}

\usepackage{float}

\newcommand{\B}{\mathcal{B}}
\newcommand{\E}{\mathcal{FK}}
\newcommand{\Tau}{\mathcal{T}}
\newcommand{\PP}{\mathbb{P}}
\newcommand{\kk}{K}
\newcommand{\Span}{\operatorname{span}}

\newcommand{\Aff}{\operatorname{Aff}}
\newcommand{\charac}{\operatorname{char}}
\newcommand{\rank}{\operatorname{rank}}
\newcommand{\ord}{\operatorname{ord}}
\newcommand{\pr}{\operatorname{pr}}
\newcommand{\wt}{\operatorname{wt}}
\newcommand{\GL}{\operatorname{GL}}

\makeatletter
\numberwithin{equation}{section}
\numberwithin{figure}{section}
\numberwithin{table}{section}

\newtheorem{thm}{Theorem}[section]
\newtheorem{lem}[thm]{Lemma}
\newtheorem{prop}[thm]{Proposition}
\newtheorem{prob}[thm]{Problem}
\newtheorem{cor}[thm]{Corollary}

\newtheorem{defn}[thm]{Definition}

\newtheorem{rem}[thm]{Remark}
\newtheorem{exa}[thm]{Example}

\makeatother

\title{Point modules of Nichols algebras over non-abelian groups}

\author{Be'eri Greenfeld} \address{Department of Mathematics and Statistics, Hunter College and CUNY Graduate Center, 695 Park Avenue, New York NY 10065, USA} \email{beeri.greenfeld@hunter.cuny.edu}

\author{L. Vendramin} \address{Department of Mathematics and Data Science, Vrije Universiteit Brussel, Pleinlaan 2, 1050 Brussel, Belgium}
\email{Leandro.Vendramin@vub.be}

\author{S. Venkatesh} \address{University of Washington, Department of Mathematics, Box 354350, C-138 Padelford Hall
Seattle, WA 98195-4350 USA}
\email{samnsid7@uw.edu}

\begin{document}

\begin{abstract}
Nichols algebras are among the most important and mysterious objects in quantum algebra. We continue the systematic program of studying them through their point modules.

We study truncated point modules of Nichols algebras associated with racks and $2$-cocycles over non-abelian groups. We determine the truncated point spaces for all currently known finite-dimensional Nichols algebras arising from simple Yetter--Drinfeld modules over groups. In particular, for all such algebras the space of $3$-truncated point modules is empty. We further find all truncated point modules for other important families, including affine racks equipped with the constant cocycle $-1$, and for the non-trivial indecomposable rack of size three with the constant cocycle $\zeta_3$.
\end{abstract}

\maketitle 

\section{Introduction}

\subsection{Nichols algebras}
An important part of the recent theory of Hopf algebras relies on the
classification of finite-dimensional pointed Hopf algebras. The successful
approach used to tackle this challenging problem is based on the lifting method
of Andruskiewitsch and Schneider~\cite{zbMATH01247621}. The core of this method is related to
the study of certain braided Hopf algebras known as Nichols algebras.

Let $K$ be a field. A \emph{braided vector space} is a finite-dimensional $K$-vector space $V$ equipped with a braiding operator $c\colon V\otimes_{K} V\rightarrow V\otimes_{K} V$ , namely, an invertible map satisfying $(I\otimes c)(c\otimes I)(I\otimes c)=(c\otimes I)(I\otimes c)(c\otimes I)$. 

To every braided vector space $(V, c)$ we associate a certain (braided Hopf) graded 
algebra 
known as the \emph{Nichols algebra} of $(V, c)$. It is defined as
\[
\mathcal{B}(V,c)=K\oplus V\oplus\bigoplus_{n\geq2}V^{\otimes n}/\ker\mathcal{S}_n,
\]
where the \emph{quantum symmetrizer} $\mathcal{S}_n$ is defined 
as 
\begin{align*}
    &\mathcal{S}_2 = I+c_1,\\
    &\mathcal{S}_3 = I+c_1+c_2+c_1c_2+c_2c_1+c_1c_2c_1,\\
    &\quad\vdots \\
    &\mathcal{S}_{n+1} = (I\otimes\mathcal{S}_n)(I+c_1+c_1c_2+\cdots+c_1c_2\cdots c_n).
\end{align*}
Here $c_i(v_1\otimes \cdots \otimes v_n)=v_1\otimes \cdots \otimes c(v_i \otimes v_{i+1}) \otimes \cdots \otimes v_n$.
Although a Nichols algebra is associated with every braided vector space, the
case of braided vector spaces arising from groups is particularly rich and
interesting. Let $G$ be a finite group and let $\mathcal{C}\subseteq G$ be a union of conjugacy classes. Then for a certain collection of non-zero scalars 
$\{q_{gh}:g,h\in\mathcal{C}\}$, 
the map 
\[        
c(g\otimes h)=q_{gh}(ghg^{-1})\otimes g,
\]
extends to a braiding on $V=K[\mathcal{C}]$. The resulting braided vector spaces can be realized as
Yetter--Drinfeld modules
over $K[G]$ (namely, simultaneously compatible modules and comodules over $K[G]$).

A crucial step in the lifting method of Andruskiewitsch and Schneider is 
the \emph{classification of finite-dimensional Nichols algebras}. This fundamental problem
has attracted significant attention over the past 25 years; see
\cite{zbMATH06797653} and the references therein. While the problem
remains open in full generality, several breakthroughs have been made toward its resolution. For
example, it is completely solved for abelian groups~\cite{zbMATH06224213,zbMATH05027328,zbMATH05376870}, but remains wide open over non-abelian groups. 
When the Yetter--Drinfeld module is semisimple but not simple then, under some
natural and mild assumptions, the classification of finite-dimensional Nichols
algebras is known~\cite{zbMATH06694062,zbMATH06736624}. However, the situation is entirely different for simple Yetter--Drinfeld modules. 
Although there is an explicit description of Nichols algebras using the quantum symmetrizer, it is in general very difficult to determine whether a given Nichols algebra is finitely presented, or to find explicit relations in cases where it is, or even whether it is finite-dimensional or not. This makes it nearly impossible to apply standard methods from combinatorial algebra.
Despite recent progress made for Nichols algebras over solvable groups~\cite{arXiv:2411.02304,zbMATH07837859} or over various special classes of simple groups~\cite{zbMATH07796386,zbMATH05903856,zbMATH05869018}, the 
general problem still seems somewhat out of reach.


In~\cite{zbMATH05942621}, the authors studied Nichols algebras over the symmetric group $\mathbb{S}_4$ and, in the process, provided a precise description of the space of quadratic relations. Similarly, while investigating cubic relations, the authors of \cite{zbMATH06050325} introduced \emph{braided racks}, which turns out to be highly useful in this context.

Currently, we know of only a few examples of finite-dimensional Nichols algebras over groups; see Table~\ref{tab:examples}. Whether this list is complete remains an open question. Identifying patterns among these examples will naturally aid in the classification of finite-dimensional Nichols algebras over groups.


\subsection{Noncommutative projective geometry}
Let us take a noncommutative (projective) geometric approach. Let $A=\bigoplus_{i=0}^{\infty} A_i$ be a connected graded $K$-algebra ($A_0=K$), finitely generated by degree $1$ elements. A point module over $A$ is a graded, cyclic $A$-module generated in degree zero and with one-dimensional homogeneous components, namely, with Hilbert series 
\[
H(t)=(1-t)^{-1}=1+t+t^2+\cdots.
\]
Point modules of noncommutative graded algebras take the place of points on projective algebraic varieties in classical algebraic geometry and, indeed, if $A$ is commutative then its point modules are parametrized by the proj construction.

The parameter space of point modules over $A$, denoted $\Gamma(A)$, can be realized as the inverse limit of a system of projective schemes
$$\Gamma(A)=\varprojlim_n\  \Gamma_n(A),\ \ \ \cdots \rightarrow \Gamma_2(A)\rightarrow \Gamma_1(A) \rightarrow \Gamma_0(A)$$
where $\Gamma_n(A)$ is the moduli scheme of $n$-truncated point modules over $A$: these are graded, cyclic $A$-modules, generated in degree zero and with one-dimensional homogeneous components up to degree $n$; namely, with Hilbert series $H(t)=1+t+\cdots+t^n$. Artin and Zhang \cite{AZ} proved that, for strongly Noetherian algebras, the above system stabilizes and hence $\Gamma(A)$ is a projective scheme. However, in the general non-Noetherian setting, $\Gamma(A)$ need not even be an algebraic stack, see e.g. \cite{BG,Rogalski,Smith}. By a (slight) abuse of notation, we think of $\Gamma_n(A)$ as its set of closed points over a given field, as we are more interested in the existence of (truncated) point modules themselves rather than the scheme structure of the moduli space. Namely, $\Gamma_n(A)$ stands in a bijection with the set of $n$-truncated point modules over $A$.

Point modules and truncated point modules have gained significant importance in noncommutative projective algebraic geometry, as they allow us to associate geometric objects to noncommutative algebraic structures and thus study the latter by geometric means, see \cite{ATV, ATV2, AZ2, AZ, RRZ, RZ, Smith}. Furthermore, this approach turned out to be extremely valuable even for purely ring theoretic purposes. A notable instance is the solution of Sierra and Walton \cite{SW} of the Dean--Small problem on the Noetherianity of the enveloping algebra of the Witt Lie algebra. A systematic extension of the theory of point modules over noncommutative algebras to the setting of line modules can be found in \cite{SheltonVancliff2002a,SheltonVancliff2002b}.

An evident observation is that a graded algebra admitting a point module is infinite-dimensional. Therefore, to prove that a given algebra is infinite-dimensional it suffices to prove that it --- or any of its graded subalgebras --- admit a point module. Motivated by this, we can think of
$$p(A) := \sup \{n\geq 0\ |\ \Gamma_n(A)\neq \emptyset\}$$
as a measure of `largeness' of $A$, and in particular, if $p(A)=\infty$ then $A$ is infinite-dimensional. (Nevertheless, even some very well behaved graded algebras can lack point modules, see \cite{Vancliff2024}.) Even when $A$ is finite-dimensional, it is interesting to study $p(A)$ and the associated spaces of truncated point modules over $A$. With this in mind, a natural approach to understand the structure of Nichols algebras is to study their (truncated) point modules. 

\begin{prob} 
\label{prob:main}
    Compute $\Gamma_n(\B(X,q))$ for finite racks $X$ and $2$-cocycles $q$.
\end{prob}

Problem \ref{prob:main} is especially interesting as a strategy to prove that certain Nichols algebras are infinite-dimensional, or at least, to give non-trivial lower bounds on their Hilbert series (or Hilbert polynomials).

This challenge has been proposed in \cite{An2004} and developed in \cite{ABFF2023}. Motivated by this, in \cite{GMSW}, the invariant $p(-)$ has been computed for Fomin--Kirillov algebras (which appear as an important special case of Nichols algebras over the symmetric groups, and for which 
one does not even know when they are finite-dimensional).

\subsection{Main results}

An important class of racks is the class of \emph{braided} racks. Following \cite{zbMATH06050325}, we say that a rack $X$ is braided if it is a quandle (that is, $x\triangleright x = x$) and for all $x, y\in X$ either $x\triangleright (y\triangleright x)=y$ or $x\triangleright y = y$. These naturally appear in the study of Nichols algebras with few low-degree relations. All our racks are in fact unions of conjugacy classes of a group.

\begin{prop}
    Let $X$ be a braided rack and let $q\equiv -1$. Then $\Gamma_2\left(\B(X,q)\right)=\emptyset$.
\end{prop}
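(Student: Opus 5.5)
The plan is to write a $2$-truncated point module in coordinates and test it against a few explicit quadratic relations of $\B(X,q)$ with $q\equiv -1$.

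\emph{Setting up the equations.} Let $M=M_0\oplus M_1\oplus M_2$ be $2$-truncated, with each $M_i$ one-dimensional. Fix bases $m_0,m_1,m_2$. Each $x\in X$ acts by $x m_0=a_x m_1$ and $x m_1=b_x m_2$. Cyclicity and generation in degree $0$ force $a=(a_x)\neq 0$ and $b=(b_x)\neq 0$. Conversely, such a module exists exactly when every quadratic relation $r=\sum r_{xy}\,xy\in\ker\mathcal S_2$ satisfies
\[
\sum_{x,y} r_{xy}\, b_x a_y=0 .
\]
It therefore suffices to exhibit enough elements of $\ker(I+c)$ to force $a=0$ or $b=0$. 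Each relation only needs to lie in $\ker(I+c)$; we do not need the full space of quadratic relations.

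\emph{The relations.} With $c(x\otimes y)=-(x\triangleright y)\otimes x$, the braiding permutes the basis $\{x\otimes y\}$ up to sign, so $\ker(I+c)$ splits along $c$-orbits.
\begin{itemize}
\item Since $X$ is a quandle, $c(x\otimes x)=-x\otimes x$, which gives the relation $x^2=0$.
\item If $x\triangleright y=y$ with $x\neq y$, the orbit is $\{x\otimes y,\ y\otimes x\}$. Here $y\triangleright x=x$ also holds: it is automatic when $X$ is a conjugacy class, and I would verify it from the rack axioms in general. This gives $xy+yx=0$.
\item If $x\triangleright y\neq y$, the braided condition $x\triangleright(y\triangleright x)=y$ gives a $c$-orbit of length three, $x\otimes y\mapsto z\otimes x\mapsto y\otimes z$ with $z=x\triangleright y$. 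Using $z\triangleright x=y$, one checks directly that $xy+zx+yz$ lies in $\ker(I+c)$.
\end{itemize}

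\emph{The contradiction.} Suppose $a\neq 0$ and $b\neq 0$.
\begin{itemize}
\item From $x^2=0$ we get $b_xa_x=0$ for all $x$, so $\operatorname{supp} a\cap\operatorname{supp} b=\emptyset$.
\item Pick $x$ with $a_x\neq 0$ and $y$ with $b_y\neq0$. Then $x\neq y$, $b_x=0$ and $a_y=0$.
\item If $y\triangleright x=x$, the relation $yx+xy=0$ gives $b_ya_x+b_xa_y=b_ya_x=0$, a contradiction.
\item Otherwise, apply the three-term relation to the pair $(y,x)$ with $w=y\triangleright x$, namely $yx+wy+xw=0$. It gives $b_ya_x+b_wa_y+b_xa_w=0$. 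Since $a_y=b_x=0$, this reduces to $b_ya_x=0$, again a contradiction.
\end{itemize}
Hence $\Gamma_2(\B(X,q))=\emptyset$.

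\emph{Main obstacle.} I expect the main difficulty to be bookkeeping rather than ideas. One has to confirm the orbit structure of $c$ on $X\times X$ for braided racks, in particular that $\triangleright$-commuting is symmetric and that non-commuting pairs give orbits of length exactly $3$. One also has to check that the signs in the three-term relation are all $+$, which uses $c^3=-\mathrm{id}$ on that orbit, and that the index ordering matches the convention $\sum r_{xy}b_xa_y$ for left modules. In the final step only the term $b_ya_x$ survives, so its coefficient being nonzero is all that matters, and the argument is robust to these sign conventions.
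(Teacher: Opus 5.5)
Your proof is correct and is essentially the paper's argument, namely Proposition~\ref{prop:braided} applied via Corollary~\ref{cor:braided}: $x^2=0$ makes $\operatorname{supp}a$ and $\operatorname{supp}b$ disjoint, and the orbit relation of $(y,x)$ with $b_y\neq 0\neq a_x$ (two or three terms, by Proposition~\ref{prop:3}) collapses to the single term $b_ya_x$. One small correction: symmetry of commuting ($x\triangleright y=y\Rightarrow y\triangleright x=x$) does not follow from the rack axioms alone, but it does hold here, both because the paper's racks are of conjugacy type and because in a braided quandle, if $x\triangleright y=y$, the alternative $y\triangleright(x\triangleright y)=x$ would force $y=y\triangleright y=x$.
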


Another important class of racks arising from solvable groups is of \emph{affine racks}, see definition and examples in Section \ref{sec:affine racks}. Some of the known finite-dimensional Nichols algebras arise from affine racks but, in general, these racks can yield infinite-dimensional Nichols algebras.

\begin{thm} \label{thm:affine}
Let $X=\Aff(p,\alpha)$ be an affine rack and let $\B(X,q)$ be the Nichols algebra associated with $X$ and with the cocycle $q\equiv -1$ over a field of characteristic zero. Then $\Gamma_2(\B(X,q))=\emptyset$.
\end{thm}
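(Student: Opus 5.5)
The plan is to reduce $\Gamma_2(\B(X,q))=\emptyset$ to a statement about a system of bilinear equations coming from the quadratic relations, and then to show that this system has no solution. A $2$-truncated point module is determined by a pair of points $([a],[b])\in\PP(V^*)\times\PP(V^*)$, where $a=(a_x)_{x\in X}$ and $b=(b_x)_{x\in X}$ are nonzero. The condition on the pair is that every quadratic relation $r=\sum r_{xy}\,xy\in\ker\mathcal S_2$ satisfies $\sum r_{xy}a_xb_y=0$, up to fixing the order convention for which factor acts first. So the first step is to describe $\ker(I+c)$ explicitly for $X=\Aff(p,\alpha)$ with $q\equiv-1$. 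Here $X=\mathbb F_p$ and $x\triangleright y=(1-\alpha)x+\alpha y$.

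\textbf{Step 1: the quadratic relations.} The braiding is $c(x\otimes y)=-(x\triangleright y)\otimes x$, so $c$ permutes the basis $X\times X$ up to the sign $-1$. Write a $c$-orbit as $w_0,\dots,w_{n-1}$ with $c(w_i)=-w_{i+1}$. Then $(I+c)\sum_i\lambda_iw_i=\sum_i(\lambda_i-\lambda_{i-1})w_i$. Hence $\ker(I+c)$ is spanned by the orbit sums $\sum_{w\in O}w$, one for each orbit $O$. The orbit of $(x,x)$ is a singleton, which gives $x^2=0$. For $x\ne y$, write $d=y-x$. One computes that $(x,y)\mapsto(x+\alpha d,\,x)$, so the difference transforms as $d\mapsto-\alpha d$. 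Thus the orbit of $(x,x+d)$ is $\{(x_k,x_k+(-\alpha)^kd)\}_k$, where $x_{k+1}=x_k+\alpha(-\alpha)^kd$, and its length is the multiplicative order of $-\alpha$, or a multiple of it determined by when the translation part closes up. The conditions on $(a,b)$ are therefore:
\begin{itemize}
\item[(i)] $a_xb_x=0$ for every $x$;
\item[(ii)] $\sum_k a_{x_k}b_{x_k+(-\alpha)^kd}=0$ for every orbit.
\end{itemize}

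\textbf{Step 2: exploit the symmetry.} Suppose $(a,b)$ is a solution. The group $\Aff(\mathbb F_p)\supseteq\mathbb F_p\rtimes\langle\alpha\rangle$ acts on $X$ by rack automorphisms commuting with $c$, so it permutes the orbits and the solution set. I would fix a point $x_0$ with $a_{x_0}\neq0$ and normalise it by a translation. For each $t$, I would sum the relations (ii) over all orbits whose pairs have difference in the coset $t\langle-\alpha\rangle$ and which pass through prescribed first coordinates. The aim is to obtain linear identities of the form $\sum_{u}a_u\,b_{u+s}=(\text{diagonal terms})$ for $s$ ranging over the cosets of $\langle -\alpha\rangle$ in $\mathbb F_p^\times$. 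Condition (i) kills the diagonal terms. Passing to the group algebra $K[\mathbb F_p]$, or to Fourier coefficients, which is legitimate because $\charac K=0$ and so $p$ is invertible and $K$ may be enlarged to contain $p$-th roots of unity, these identities become a statement about the convolution of $\check a$ and $b$. Namely, the convolution restricted to each coset of $\langle-\alpha\rangle$ sums to zero, while its value at $0$ also vanishes.

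\textbf{Step 3: contradiction.} Combining these coset identities with (i), I would show that $a$ and $b$ must be supported so that the orbit relation through some $(x_0,y)$ with $b_y\ne0$ has exactly one nonzero term. This would give $a_{x_0}b_y=0$, a contradiction. The natural first attempt is an extremal choice: take $y$ in the support of $b$ and trace its orbit using the explicit formula for $x_k$. Since $x_{k+1}-x_k$ runs through $\alpha$ times a geometric progression, it should be possible to show that the other first coordinates $x_k$ avoid $\operatorname{supp}a$ or the other second coordinates avoid $\operatorname{supp}b$, possibly after averaging with the Fourier identities of Step 2.

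I expect Step 3 to be the main obstacle. Each individual orbit relation is an underdetermined bilinear equation, and the difficulty is to find a combination of orbit relations that is sign-definite or isolates a single monomial. This is where characteristic zero should be essential, for instance through a positivity or non-cancellation argument of the form ``a sum of $m$ equal nonzero terms is nonzero'' with $m<p$ invertible in $K$. I would first test the argument on the smallest cases $\Aff(3,2)$, $\Aff(5,2)$ and $\Aff(7,3)$ to find the right combination of relations before attempting the general case.
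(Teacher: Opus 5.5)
There is a genuine gap. Step~1 is fine. Step~2, however, sums over the wrong family of orbits, and the mechanism you propose for Step~3 provably cannot work. The analytic input the proof needs is missing.

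\textbf{Step 2 uses the wrong invariant.} Besides $y-x\mapsto-\alpha(y-x)$, the Hurwitz action also preserves $x+\alpha y$, because $(1-\alpha)x+\alpha y+\alpha x=x+\alpha y$. For $\alpha\neq-1$, an off-diagonal orbit is exactly the set of pairs with a fixed value of $x+\alpha y$ and with $y-x$ in a fixed coset of $\langle-\alpha\rangle$. Summing over whole difference cosets, as you propose, discards the first invariant. It leaves only $(p-1)/\ord(-\alpha)$ scalar equations, which together with (i) have many solutions. For example, for $\Aff(7,3)$, $a=\delta_0$ and any nonzero $b$ supported on $\{1,2,4\}$ with $b_1+b_2+b_4=0$ satisfy all of them. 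The pointwise identities $\sum_u a_ub_{u+s}=0$ you aim for cannot be obtained by combining the relations unless $\alpha=-1$, since $\{(u,u+s)\}_u$ is not a union of Hurwitz orbits.

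The paper instead sums over the level sets $\{x+\alpha y=d\}$, obtaining $p$ relations $\sum_g x_{d-\alpha g}x_g=0$. Write $a_g$ for the coefficient of $x_g$ on $e_0$ and $b_h$ for that on $e_1$. These relations read $\sum_g a_g b_{d-\alpha g}=0$ for all $d$, i.e.\ $a*\tilde b=0$ in $K[\mathbb{F}_p]$ with $\tilde b(h)=b_{\alpha h}$. Equivalently, a circulant matrix in the $b$'s annihilates the vector $a$.

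\textbf{Step 3 cannot be done by support bookkeeping, even when all relations are available.} Take $p=7$ and $\alpha=-1$. Here the orbits are the sets $\{(u,u+d)\}_u$, so your Step~2 identities are the complete list of relations. Let $a,b$ have supports $\{0,1,3\}$ and $\{2,4,5,6\}$, in either order. These supports are disjoint, so (i) holds. Since $\{0,1,3\}$ is a perfect difference set mod~$7$, every $d\neq 0$ is realized by exactly two pairs. So every orbit relation has $0$ or $2$ nonzero terms, never exactly one.

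Excluding such configurations requires the actual values. The right tool is Chebotarev's theorem in Tao's form: $|\operatorname{supp} f|+|\operatorname{supp}\hat f|\geq p+1$ for nonzero $f\colon\mathbb{Z}/p\to\mathbb{C}$. From $a*\tilde b=0$ you get $\hat a\,\hat{\tilde b}=0$, so the Fourier supports are disjoint. Hence
\[
|\operatorname{supp} a|+|\operatorname{supp} b|\geq 2(p+1)-p=p+2,
\]
while (i) forces $|\operatorname{supp} a|+|\operatorname{supp} b|\leq p$, a contradiction.

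This is where characteristic zero and the primality of $p$ are used, not through non-cancellation of $m$ equal terms. The paper packages this step as its circulant-matrix lemma. It proves that lemma via the eigenvectors of the circulant and Tao's bound on how many $p$-th roots of unity a sparse polynomial can vanish at.
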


Equipped with these results, we turn to analyze the spaces of truncated point modules over the full list of known finite-dimensional Nichols algebras arising from simple Yetter--Drinfeld modules over groups (see Table~\ref{tab:examples}).


\begin{thm} 
\label{thm:table}
For finite-dimensional Nichols algebras over the known simple Yetter-Drinfeld modules, we have truncated point spaces as given in Table \ref{tab:examples}. In particular, for all of them $\Gamma_3=\emptyset$.
\end{thm}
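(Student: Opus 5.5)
The plan is to go through Table~\ref{tab:examples} case by case, using the fact that $\Gamma_n(A)$ depends only on the relations of $A$ in degrees $\le n$. A point of $\Gamma_n(A)$ is a sequence of points $p_1,\dots,p_n\in\mathbb{P}(V^*)$, with $p_i=[\lambda_i]$ and $\lambda_i\in V^*$, such that every relation $r\in\ker\mathcal{S}_k$ with $k\le n$ satisfies $(\lambda_{j}\otimes\cdots\otimes\lambda_{j+k-1})(r)=0$ for all admissible windows $j$. This follows from the usual identification of truncated point modules with common zeros of multilinearized relations in products of projective spaces. So for each algebra in the table I only need $\ker\mathcal{S}_2$ and $\ker\mathcal{S}_3$, which are finite computations.

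\textbf{The cocycle $q\equiv -1$.} Most entries have constant cocycle $-1$ on a braided rack: the transpositions in $\mathbb{S}_3,\mathbb{S}_4,\mathbb{S}_5$, the $4$-cycles in $\mathbb{S}_4$, and the affine racks $\Aff(5,2)$, $\Aff(5,3)$, $\Aff(7,3)$, $\Aff(7,5)$. For these, the Proposition on braided racks and Theorem~\ref{thm:affine} give $\Gamma_2=\emptyset$, hence also $\Gamma_3=\emptyset$. The same applies to the non-constant cocycle $\chi$ on transpositions of $\mathbb{S}_n$, and to the $4$-cycles with a non-constant cocycle, provided the quadratic relations keep the same shape. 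In that case I would rerun the proof of the Proposition, using the relations $x_i^2=0$ and $x_ix_j+q\,x_{i\triangleright j}x_i+\cdots=0$ for each orbit. To apply the Proposition literally, I could first rescale the basis to make the cocycle cohomologous to $-1$.

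\textbf{The remaining cases.} These should be the cases with $\Gamma_2\neq\emptyset$, for instance the rack of size $3$ with cocycle related to $\zeta_3$ in characteristic $2$, or the size-$4$ tetrahedral rack with a $-1$ cocycle in characteristic $2$, over $\mathbb{A}_4$ and related groups. For each I would write out the quadratic relations explicitly. For a degree-$2$ relation $\sum a_{xy}x\otimes y$, the condition on a window reads $\sum a_{xy}\lambda_1(x)\lambda_2(y)=0$, a bilinear system. I would solve it by splitting on the support of $\lambda_1$: the relations $x^2=0$ and the mixed relations on each $\triangleright$-orbit force strong restrictions. This recovers $\Gamma_1=\mathbb{P}^{|X|-1}$, computes $\Gamma_2$ explicitly as the finite set or variety recorded in the table, and shows that each point of $\Gamma_2$ is a pair $(p_1,p_2)$ with $p_2$ determined by $p_1$, essentially supported on one or two basis vectors.

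The main obstacle is showing that no point of $\Gamma_2$ extends to $\Gamma_3$. I would proceed in two steps.
\begin{itemize}
\item First use that $(p_2,p_3)$ must itself lie in $\Gamma_2$. Together with the explicit description of $\Gamma_2$ as a map $p_1\mapsto p_2$, this forces $p_1\mapsto p_2\mapsto p_3$, with $p_2$ lying in the image set.
\item Then test the cubic relations. These are the kernel of $\mathcal{S}_3$ modulo $V\otimes\ker\mathcal{S}_2+\ker\mathcal{S}_2\otimes V$, which in these small cases includes relations like $(x_ix_j)^k$-type or $x_ix_jx_i+\cdots$. I would evaluate them on the forced triples $(\lambda_1,\lambda_2,\lambda_3)$ and show some evaluation is nonzero.
\end{itemize}
If a cubic relation is hard to isolate by hand, I would fall back on a computer calculation of $\ker\mathcal{S}_3$ for each rack, then evaluate on the finitely many candidate chains. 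Since the candidates are supported on few basis vectors, the evaluation reduces to a handful of scalar checks.
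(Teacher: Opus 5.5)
There is a genuine gap in the rows where $\Gamma_2\neq\emptyset$. These are exactly $\B((12)^{\mathbb{S}_3},\zeta_3)$ in characteristic $2$ and the $5184$-dimensional $\B(\Tau,\psi)$. You never treat the latter; instead you list $\B(\Tau,1)$ in characteristic $2$, which belongs elsewhere (see below). Your plan assumes that $\Gamma_2$ is a finite set of pairs supported on one or two basis vectors, so that only finitely many chains $(p_1,p_2,p_3)$ need testing. This is false. For $(12)^{\mathbb{S}_3}$ there are only two bilinear equations on $\PP^2\times\PP^2$, and $\Gamma_2$ is a rational surface whose generic point has all coordinates non-zero; $\pr_2$ is bijective away from three points, over which its fibres are copies of $\PP^1$. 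For $\B(\Tau,\psi)$, $\Gamma_2$ surjects onto the surface $\{\det M=0\}\subseteq\PP^3$ from the proof of Proposition \ref{prop:5184}. So the chains form a positive-dimensional family, and no handful of scalar checks exists.

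The missing idea is the paper's weight argument. Since $q_{xx}$ is a primitive cube root of unity, $(1+q_{xx})(1+q_{xx}+q_{xx}^2)=0$ puts the monomials $x^3$, $x\in X$, in $\ker\mathcal{S}_3$. Hence in the $3\times|X|$ array of coordinates of $(\lambda,\mu,\nu)$ every column contains a zero, so there are at most $2|X|$ non-zero entries. The explicit quadratic relations then bound the weights from below. For $|X|=3$: $\wt(\mu)=1$ produces a column with no zero, $\wt(\mu)=2$ forces $\wt(\lambda)=\wt(\nu)=3$, and $\wt(\mu)=3$ forces $\wt(\lambda),\wt(\nu)\geq 2$. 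For $|X|=4$: all three weights are at least $3$, using that $M$ is invertible when $\wt(\mu)=2$. Pigeonhole then contradicts the column condition.

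The sorting of the remaining cases also needs repair.
\begin{itemize}
\item $\Aff(5,2)$ and $\Aff(5,3)$ are not braided, since $\alpha^2-\alpha+1\neq 0$ in $\mathbb{F}_5$. Theorem \ref{thm:affine} covers them only in characteristic zero, but these rows carry no characteristic restriction. The paper instead verifies condition (1) of Proposition \ref{prop:braided} directly (Examples \ref{ex:5 2}, \ref{ex:5 3}), which works over any field.
\item For $\chi$ on $(12)^{\mathbb{S}_4}$ and $(12)^{\mathbb{S}_5}$ you cannot rescale to the constant cocycle $-1$: the paper notes that $\chi$ is not equivalent to $-1$ for $n>3$. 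No rescaling is needed, though. Proposition \ref{prop:braided} allows any cocycle with $q_{xx}=-1$ and $\prod_{(x,y)\in C}q_{xy}=(-1)^{|C|}$, and $\chi$ satisfies this on the orbits of size $2$ and $3$. Condition (1) holds for every braided rack, as in the proof of Corollary \ref{cor:braided}. The paper itself simply cites \cite{GMSW} for these Fomin--Kirillov algebras.
\item $\Tau$ is braided, so Corollary \ref{cor:braided} gives $\Gamma_2=\emptyset$ for $\B(\Tau,-1)$ in every characteristic, including characteristic $2$, where the cocycle is $1=-1$. This row belongs with your first group, not with the cases where $\Gamma_2\neq\emptyset$.
\end{itemize}
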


\begin{table}[H] 
    \centering
    \begin{tabular}{|l|l|l|l|l|l|}
        \hline 
        Rack & Rank & Dimension & $2$-cocycle & Characteristic & Point spaces \tabularnewline
        \hline $(12)^{\mathbb{S}_3}$ & 
        3 & $12$ & $-1$ & & $\Gamma_2=\emptyset$ \\
        \hline
        $(12)^{\mathbb{S}_3}$ & 3 & $432$ & $\zeta_3$ & $2$ & $\Gamma_2\neq \emptyset,\ \Gamma_3=\emptyset$ \\
        \hline
        $\mathcal{T}$ & 4 & $36$ & 1 & $2$ & $\Gamma_2 = \emptyset$ \\
        \hline
        $\mathcal{T}$ & 4 & $72$ & $-1$ & $\ne 2$ & $\Gamma_2 = \emptyset$ \\
        \hline
        $\mathcal{T}$ & 4 & $5184$ & $\psi$ & & $\Gamma_2\neq \emptyset,\ \Gamma_3=\emptyset$ \\
        \hline
         $\Aff(5,2)$ & 5 & $1280$  & $-1$ & & $\Gamma_2=\emptyset$ \\ \hline
        $\Aff(5,3)$ & 5 & $1280$  & $-1$ & & $\Gamma_2=\emptyset$ \\ \hline
        $(12)^{\mathbb{S}_4}$ & 6 & $576$  & $\chi$ & & $\Gamma_2=\emptyset$ \\ \hline
        $(12)^{\mathbb{S}_4}$ & 6 & $576$  & $-1$ & & $\Gamma_2=\emptyset$ \\ \hline
        $(1234)^{\mathbb{S}_4}$ & 6 & $576$  & $-1$ & & $\Gamma_2=\emptyset$ \\ \hline
        $\Aff(7,3)$ & 7 & $326592$ & $-1$ & & $\Gamma_2=\emptyset$ \\
        \hline
        $\Aff(7,5)$ & 7 & $326592$ & $-1$ &  & $\Gamma_2=\emptyset$ \\
        \hline
        $(12)^{S_5}$ & 10 & $8294400$  & $\chi$ & & $\Gamma_2=\emptyset$ \\ \hline
        $(12)^{S_5}$ & 10 & $8294400$  & $-1$ & & $\Gamma_2=\emptyset$ \\ \hline
    \end{tabular}
    \caption{Finite-dimensional Nichols algebras over all known simple Yetter--Drinfeld modules.} \label{tab:examples}
\end{table}

We then move to cocycles defined by higher roots of unity, a much harder situation in which even less is known even for the smallest non-trivial indecomposable rack $X=(12)^{\mathbb{S}_3}\cong \Aff(3,-1)$. For cocycle $q\equiv -1$, $\B(X,q)\cong \E_3$; with $q\equiv \zeta_3\in \mathbb{F}_4$, $\B(X,q)$ is covered by Proposition \ref{prop:cubic roots}, and in particular $\Gamma_2(\B(X,q))\neq \emptyset$ while $\Gamma_3(\B(X,q))=\emptyset$. If the cocycle is $q\equiv \zeta_3\in K$ and $\text{char}(K)=0$ then $\B(X,q)$ is infinite-dimensional (see \cite{zbMATH07837859}) and, in contrast with the case of $\text{char}(K)=2$, we get a much more complicated behavior of the truncated point spaces.

\begin{thm} \label{prop:zeta3}
Let $X=\Aff(3,-1)$ and $q\equiv \zeta_3\in K$ where $\text{char}(K)=0$. Then: $$\Gamma_n\left(\B(X,q)\right) = \begin{cases}
    \mathbb{P}^2\times \mathbb{P}^2 & \text{if} \ n=2, \\ 90\ \text{points} & \text{if} \ n=3, \\ 
    36\ \text{points} & \text{if} \ n=4, \\ 
    \emptyset & \text{if} \ n\geq 5.
\end{cases}$$
\end{thm}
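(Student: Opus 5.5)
We will compute the spaces $\Gamma_n$ by working directly with the low-degree relations of $\B(X,q)$. Write $X=\{x_0,x_1,x_2\}$ with $x_i\triangleright x_j = x_{2i-j}$ (indices mod $3$), and let the braiding be $c(x_i\otimes x_j)=\zeta_3\, x_{2i-j}\otimes x_i$. The first step is to determine the quadratic relations, i.e.\ $\ker(I+c)$ in $V^{\otimes 2}$. The braiding permutes the basis $x_ix_j$ up to scalars. On each fixed pair $x_ix_i$ we have $c(x_i\otimes x_i)=\zeta_3 x_i\otimes x_i$, and $1+\zeta_3\neq 0$, so there is no relation there. The remaining six basis vectors split into $c$-orbits of length $3$: $x_0x_1\mapsto x_2x_0\mapsto x_1x_2\mapsto x_0x_1$, and similarly for the other orientation. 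On such an orbit $c$ acts with $c^3=\zeta_3^3=1$, so $-1$ is an eigenvalue exactly when $c^3=-1$. Since it is not, there are \emph{no} quadratic relations. This is consistent with $\Gamma_2=\mathbb{P}^2\times\mathbb{P}^2$: a $2$-truncated point module is a pair of points $(a,b)\in\mathbb{P}(V^*)^2$ subject only to the degree-$2$ relations, and here there are none.

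Next we compute the cubic relations, i.e.\ $\ker\mathcal{S}_3\subset V^{\otimes 3}$, explicitly. We would do this by decomposing $V^{\otimes 3}$ into orbits under the braid group $\mathbb{B}_3$ acting on the $27$ basis words and computing $\ker \mathcal{S}_3$ on each orbit, which amounts to a linear algebra computation over $\mathbb{Q}(\zeta_3)$. Writing a point module via its sequence of points $p_0,p_1,p_2,\dots\in\mathbb{P}^2$ (so that $x_j$ acts on $e_k$ by $p_k(x_j)e_{k+1}$), a $3$-truncated point module is a triple $(p_0,p_1,p_2)$ with $f(p_0,p_1,p_2)=0$ for every cubic relation $f$, viewed as a multilinear form. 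The claimed answer of $90$ points means these trilinear equations cut out a finite reduced set in $(\mathbb{P}^2)^3$. We would solve the system by splitting into cases according to the supports of the $p_k$, i.e.\ which coordinates vanish. The torus-like symmetry (diagonal scalings compatible with the relations), together with the rack automorphism group $\mathbb{S}_3$, should reduce this to a few representative cases, which are then counted. In parallel we would verify the count with a Gröbner basis computation.

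For $n=4$ we also need degree-$4$ relations. These consist of the ideal generated by the cubic ones in degree $4$ together with any new generators in $\ker\mathcal{S}_4$, which we would again find by computer. For each of the $90$ triples we then determine the compatible $p_3$. Because the relations are multilinear, $(p_1,p_2,p_3)$ must itself be one of the $90$ triples. This gives a finite check, and the extra degree-$4$ relations presumably prune the result down to $36$. For $n\ge 5$ the same shift argument shows that every $(p_{k},\dots,p_{k+3})$ lies among the $36$ quadruples. We then show no $36$-point chain extends: either the overlap graph on triples has no path of the required length, or a new degree-$5$ relation kills the survivors.

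The main obstacle is obtaining the cubic and quartic relations explicitly and solving the multilinear systems exactly, especially proving the degree-$5$ emptiness. Since the algebra is infinite-dimensional, there may be new relations in degree $5$, and these must be found. Our first attempt would be the purely combinatorial shift argument on the $36$ quadruples. If that fails, we would compute $\ker\mathcal{S}_5$ restricted to the relevant monomial supports.
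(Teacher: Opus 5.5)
Your outline is correct and is essentially the paper's proof. The paper gets $\Gamma_2=\mathbb{P}^2\times\mathbb{P}^2$ from the absence of quadratic relations, obtains the cubic relations ($a^3,b^3,c^3$ plus three non-monomial ones) with Magma, and solves the trilinear system by hand via a case split on the weight of the first row using the $\mathbb{S}_3$ symmetry (90 points); it then imposes the single quartic relation on arrays whose first and last three rows lie in $\Gamma_3$ (36 points), and gets $\Gamma_5=\emptyset$ from the overlap check on $\Gamma_4$ alone, so your fallback of computing $\ker\mathcal{S}_5$ is never needed. As in the paper, all the real content lies in these explicit computations, which your proposal outlines but does not carry out.
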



Finally, we discuss truncated `thick' point modules over the Fomin--Kirillov algebra $\E_3$ and demonstrate that they form a richer space than truncated point modules. 
Chan \cite{Chan_fat} showed that thick point modules play an important role in noncommutative projective algebraic geometry, and that their parametrizing stacks can be used to construct canonical morphisms in a similar vein to the Rogalski--Zhang morphisms \cite{RZ}; see also \cite{Goetz2007}.
For a graded algebra $A$, let $\Gamma_{c,d}(A)$ be the moduli space of graded $A$-modules generated in degree zero with Hilbert polynomial $c+ct+\cdots+ct^d$ (truncated thick point modules of width $c$ and degree $d$). Thus $\Gamma_{1,d}=\Gamma_d$.

\begin{thm} 
\label{thm:22}
Let $K$ be a field and let $\E_3$ be the Fomin--Kirillov algebra on three vertices. We have:
$$\#\Gamma_{2,2}(\E_3) =  \begin{cases}
        4 & \text{if } \charac(K)=3\text{ or }\charac(K)\neq 3\ \text{and}\ \zeta_3\notin K,\\  
        6 & \text{if }\ \charac(K)\neq 3\ \text{and}\ \zeta_3\in K.
    \end{cases}$$
For $d>2$, we have $\Gamma_{2,d}(\E_3)=\emptyset$.
\end{thm}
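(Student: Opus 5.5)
We work directly with the linear-algebra description of truncated thick point modules. Write $\E_3$ with generators $x_{12},x_{13},x_{23}$ and its quadratic relations
\[
x_{ij}^2=0,\qquad x_{12}x_{23}-x_{23}x_{13}-x_{13}x_{12}=0,\qquad x_{23}x_{12}-x_{12}x_{13}-x_{13}x_{23}=0.
\]
The signs follow the convention $x_{ji}=-x_{ij}$; I would fix them once against the definition via the rack $(12)^{\mathbb{S}_3}$ with $q\equiv -1$. Since $\E_3$ is quadratic, a module in $\Gamma_{2,d}(\E_3)$ is the following data:
\begin{itemize}
\item vector spaces $M_0,\dots,M_d\cong K^2$;
\item for each $0\le i<d$, a triple of $2\times 2$ matrices $(A^{(i)}_{12},A^{(i)}_{13},A^{(i)}_{23})$ from $M_i$ to $M_{i+1}$, whose images jointly span $M_{i+1}$;
\item for each $i$, the condition that the composites $A^{(i+1)}A^{(i)}$ satisfy the five quadratic relations.
\end{itemize}
These data are taken modulo $\GL_2^{\,d+1}$, with $\GL_2$ acting at each degree. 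The group $\mathbb{S}_3$ acts on $\E_3$ by permuting indices, up to sign, and hence acts on each $\Gamma_{2,d}$. I will use this action throughout to cut down the casework.

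\textbf{Step 1: the degree-one stratum $\Gamma_{2,1}$.} The key invariant here is the rank profile $(\rank A_{12},\rank A_{13},\rank A_{23})$ of the first triple. The relation $x_{ij}^2=0$ forces $B_{ij}A_{ij}=0$ in the next degree. So if some $A_{ij}$ has rank $2$, then $B_{ij}=0$. Feeding $B_{ij}=0$ into the two mixed relations produces strong constraints on the remaining matrices.

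I would separate the configurations by these rank profiles:
\begin{itemize}
\item rank-one $A_{ij}$: write $A_{ij}=u_{ij}\lambda_{ij}^T$;
\item invertible $A_{ij}$: normalize it to the identity, using the $\GL_2$ freedom on $M_1$.
\end{itemize}
The aim is to parametrize, within each stratum, all triples that admit at least one compatible second triple $B$.

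\textbf{Step 2: solving the degree-two equations.} For each normal form from Step 1, the conditions on $B$ are linear: $B_{ij}A_{ij}=0$ together with the two mixed relations. The degree-two surjectivity condition on $B$ is then an open condition. For each stratum I would:
\begin{itemize}
\item compute the solution space of this linear system;
\item impose that it contains a jointly surjective triple $B$;
\item read off polynomial equations on the parameters of $A$.
\end{itemize}
The expectation is that all positive-dimensional families die, and only finitely many orbits survive. The case split in the statement suggests the following mechanism. In some stratum the surviving parameter should satisfy $t^2+t+1=0$, giving two Galois-conjugate modules defined over $K(\zeta_3)$. These two points would exist exactly when $\zeta_3\in K$ and $\charac K\neq 3$. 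In characteristic $3$ they would collapse into one of the $4$ other points, which are defined over the prime field. After this step I would confirm that the surviving orbits are pairwise non-isomorphic: $4$ always, plus $2$ more when $\zeta_3\in K$ with $\charac K\ne 3$.

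\textbf{Step 3: vanishing for $d>2$.} It suffices to show $\Gamma_{2,3}(\E_3)=\emptyset$, since truncation maps $\Gamma_{2,d}\to\Gamma_{2,3}$. For each of the (at most $6$) points of $\Gamma_{2,2}$, the third triple $C$ must satisfy linear equations $C\,B=(\text{relations})$. I would show that these force $\sum_a \operatorname{im}C_a\neq M_3$, so joint surjectivity fails.

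A cheap way to see this may be the following. The relations plus $x_{ij}^2=0$ imply that certain cubic monomials annihilate $M_0$, via the known degree-$3$ structure of $\E_3$, whose Hilbert series is $(1+t)^2(1+t+t^2)$. Then the $2$-dimensional $M_2$ cannot generate a $2$-dimensional $M_3$. Otherwise I would use a direct rank check on the $\mathbb{S}_3$-orbit representatives.

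The main obstacle is Step 2: organizing the case analysis so that no stratum is missed, especially strata where several $A_{ij}$ have rank one with coinciding kernels or images, and tracking where characteristic $2$ or $3$ changes the solution sets. I would first try to exploit the $\mathbb{S}_3$-symmetry together with the grading automorphisms $x_{ij}\mapsto\lambda x_{ij}$ so that only a handful of normal forms need explicit computation.
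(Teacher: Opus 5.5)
Your proposal outlines a brute-force computation but does not prove the statement. The decisive Step 2 is never carried out. The claims that all positive-dimensional families die, that exactly $4$ or $6$ isomorphism classes survive, and that the extra two come from a quadratic degenerating in characteristic $3$ are inferred from the statement rather than derived, and that is the whole content of the theorem.

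\textbf{The missing idea.} What you lack is the structural reduction that makes the case analysis collapse. Since $\Gamma_2(\E_3)=\emptyset$ (e.g.\ by Corollary~\ref{cor:braided}), for every $0\neq e\in M_0$ the cyclic submodule $\E_3 e$ has Hilbert polynomial $1$ or $1+2t+2t^2$. Indeed, $1+t+t^2$ is excluded outright; $1+t+2t^2$ is excluded by factoring out a line in degree $2$; $1+t$ is excluded by passing to $M/\E_3e$; and any submodule containing $M_1$ contains $M_2$.

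So some $e_1$ generates $(\E_3/(\xi))_{\le 2}$ with $\xi=\alpha a+\beta b+\gamma c$. The requirement $\dim(\E_3/(\xi))_2=2$ says that $a\xi,b\xi,c\xi$ span only a $2$-dimensional subspace of $(\E_3)_2=\Span_K\{ab,ac,ba,bc\}$. Solving this rank condition gives exactly $[1\colon0\colon0],[0\colon1\colon0],[0\colon0\colon1]$ and $[1\colon\theta\colon\theta^2]$ with $\theta^3=-1$. The case split in the theorem is then just the factorization $\theta^3+1=(\theta+1)(\theta^2-\theta+1)$, which becomes $(\theta+1)^3$ in characteristic $3$.

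Next, $M/\E_3e_1$ is the trivial module, so each $x\cdot e_2$ lies in $M_1=\Span_K\{a e_1,b e_1,c e_1\}$. Multiplying by $a,b,c$ and using the relations forces $x\cdot e_2=\lambda\,x\cdot e_1$ for all generators $x$, i.e.\ $M\cong(\E_3/(\xi))_{\le2}\oplus K$. In your language, all three $A_{ij}$ have rank at most $1$ and a common kernel. That is exactly the ``coinciding kernels'' stratum you single out as the main obstacle and leave unresolved.

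\textbf{Step 3.} The ``cheap way'' does not work. With $M_0$ two-dimensional, $\dim(\E_3)_3=3$ only gives $\dim M_3\le 6$, so the Hilbert series of $\E_3$ alone does not rule out $\dim M_3=2$. The real reason is the splitting above: $M$ is a quotient of $\E_3/(\xi)\oplus K$. Modulo $(\xi)$, with $\gamma\neq0$ after applying the automorphism $a\mapsto b\mapsto c\mapsto -a$ if necessary, both $abc$ and $bac$ lie in $K\cdot aba$, so $\dim M_3\le 1$. Your fallback of checking extensions of each point of $\Gamma_{2,2}$ is sound, but only after Step 2 has actually produced those points.
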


\medskip

\emph{Conventions.} All algebras are associative and unital, and graded algebras are assumed to be connected graded (that is, their degree zero components are the base field) and generated in degree one. 
We denote the complement of a subset $S\subseteq T$ by $S^c$ when the ambient set is clear.

\section*{Acknowledgments}

We are thankful to James J. Zhang for the original idea of studying Fomin--Kirillov algebras using point modules and thick point modules, and for inspiring related discussions. 
This work was partially supported by OZR3762
of Vrije Universiteit Brussel, FWO V507824N, 
and FWO Weave Senior Research Project G043326N.

\section{Racks and their Nichols algebras}





 A \emph{rack} is a set $X$ together with a binary operation
$\triangleright\colon X\times X\to X$ such that the maps $\varphi_x\colon
X \to X$, given by $y\mapsto x\triangleright y$, are bijective
for each $x\in X$, and $x\triangleright (y\triangleright z)=(x\triangleright
y)\triangleright (x\triangleright z)$ for all $x,y,z\in X$.

\begin{exa} \label{exa:conjugacy}
	Let $G$ be a group and $X$ be a union of conjugacy classes. Then $X$ with
	$x\triangleright y=xyx^{-1}$ for all $x,y\in X$ is a rack. 
\end{exa}

\begin{defn}
A rack $(X,\triangleright)$ is of \emph{conjugacy type} if it is isomorphic to a union of conjugacy classes of a group as in Example \ref{exa:conjugacy}.
\end{defn}
By convention, all the racks of this paper will be of conjugacy type.

\begin{rem} Let $(X,\triangleright)$ be a rack of conjugacy type. Then for all $x\in X$, $x\triangleright x=x$ (a rack satisfying this property is also known as a \emph{quandle}), and $x\triangleright y=y$ if and only if $y\triangleright x = x$ (indeed, in a group, both are equivalent to $x,y$ commuting with each other).
\end{rem}

The \emph{inner group} of $X$ is the group
$\mathrm{Inn}(X)=\langle\varphi_x:x\in X\rangle$. A rack is called
\emph{indecomposable} if $\mathrm{Inn}(X)$ acts transitively on $X$.  The
\emph{enveloping group} of $X$ is the group
\[
G_X=\langle x_i\,:x_ix_j=x_{i\triangleright j}x_i\text{ for all $i,j\in X$}\rangle.
\]

The enveloping group has the following \emph{universal property}: For any
group $G$ and any map $f:X\to G$ satisfying $f(x\triangleright
y)=f(x)f(y)f(x)^{-1}$ there exists a unique group homomorphism $g:G_X\to G$
such that $f=g\circ\partial$, where $\partial:X\to G_X$, $i\mapsto x_i$.

Let $X$ be a rack and $V=K X$ be the vector space (over the field $K$)
with basis $X$.  Let 
\[
q\colon X\times X\to K^{\times},\quad (x,y)\mapsto q_{x,y},
\]
be a map 
and consider the map $c\in\GL(V\otimes V)$ given by $c(x\otimes
y)=q_{x,y}(x\triangleright y)\otimes x$. Then $(V,c)$ is a braided vector
space if and only if $q$ is a \textbf{$2$-cocycle} of $X$, that is 
\begin{align*}
		q_{x,y\triangleright z}q_{y,z}=q_{x\triangleright y,x\triangleright z}q_{x,z}
	\end{align*}
for all $x,y,z\in X$. The braided vector space $(V,c)$ is said to be of
type $(X,q)$. 
These 2-cocycles arise in the context of rack cohomology. For further details, see~\cite{Andruskiewitsch2003}.

\begin{exa}	\label{exa:2.2}
	Let $\Tau=(123)^{\mathbb{A}_4}$ be the rack associated with the conjugacy class
	of $(123)$ in the alternating group $\mathbb{A}_4$. The map $\psi\colon \Tau\times
	\Tau\to K^\times$ given by
	\begin{equation}
		\label{eq:2cocycle}
		\begin{array}{c|cccc}
			& (243) & (123) & (134) & (142)\\
			\hline (243) & \zeta & \zeta & \zeta & \zeta\\
			(123) & \zeta & \zeta & -\zeta & -\zeta\\
			(134) & \zeta & -\zeta & \zeta & -\zeta\\
			(142) & \zeta & -\zeta & -\zeta & \zeta
		\end{array}
	\end{equation}
	where $\zeta\in K$ is a primitive cubic root of unity, is a $2$-cocycle of
	$\Tau$. There are three known finite-dimensional Nichols algebras
    over this rack, namely
    \begin{enumerate}
        \item for fields of characteristic different from two, $\B(\Tau,-1)$ of dimension 72; 
        \item for fields of characteristic two, $\B(\Tau,-1)$ of dimension 36; and 
        \item for all suitable fields, $\B(\Tau,\psi)$ is of dimension 5184, where $\zeta^2+\zeta+1=0$. 
    \end{enumerate}
\end{exa}

\begin{exa}
	\label{exa:FK}
	Let $n\geq3$ and $X_{n}=(12)^{S_{n}}$.  The map 
	$\chi\colon X_n\times X_n\to K^\times$ given by
	\[
		\chi(\sigma,\tau)=\begin{cases}
		1 & \text{if }\sigma(i)<\sigma(j),\\
		-1 & \text{otherwise,}
	\end{cases}
	\]
	where $\tau=(ij)$, $i<j$, is a $2$-cocycle of $X_{n}$. For $n = 3$, the 2-cocycle $\chi$ 
    is equivalent to the constant $2$-cocycle equal to $-1$.  This does not happen if 
    $n > 3$. There are finite-dimensional Nichols algebras in this setting, namely
    \begin{enumerate}
        \item $\B(X_3,-1)$ of dimension 12;
        \item $\B(X_4,-1)$ and $\B(X_4,\chi)$, both of dimension 576; and 
        \item $\B(X_5,-1)$ and $\B(X_5,\chi)$, both of dimension 8294400. 
    \end{enumerate}
    It is known that for all $n\geq3$, the Nichols algebras 
    $\B(X_n,-1)$ and $\B(X_n,\chi)$ are twist equivalent; see~\cite{zbMATH05896174,zbMATH06204244}. 
    It is still unknown if the two Nichols algebras associated to $X_n$ for $n>5$ 
    are finite-dimensional or not.
\end{exa}

There are two other finite-dimensional Nichols algebras  
associated with symmetric groups. 

\begin{exa}
    Let $K$ be a field of characteristic two containing a primitive third root of unity $\zeta$,
    and $X_3=(12)^{\mathbb{S}_3}$. 
    Then the Nichols algebra $\B(X_3,\zeta)$ has dimension 432.
\end{exa}

\begin{exa}
    Let $X=(1234)^{\mathbb{S}_4}$. Then $\B(X,-1)$ has dimension 576. 
\end{exa}

\begin{exa}
Let $q=p^f$ be a prime power and $\alpha\in \mathbb{F}_q^\times$, $\alpha\neq 1$. Let $(\Aff(q,\alpha),\triangleright)$ be a rack whose underlying set is $X=\{x_g|g\in \mathbb{F}_q\}$ and \[
x_g\triangleright x_h=x_{(1-\alpha)g+\alpha h}.
\]
This rack is called an \emph{affine rack} and 
can be interpreted as a conjugacy class in a group of affine transformations. 
When $q$ is a prime number, it follows from~\cite[Lemma 5.1]{zbMATH02100251} that $2$-cocycles 
are constant, that is $q_{xy}=\gamma\in K^\times$. There are four new finite-dimensional examples
in this setting, namely 
\begin{enumerate}
    \item $\B(\Aff(5,2),-1)$ and $\B(\Aff(5,3),-1)$, both of dimension 1280.
    \item $\B(\Aff(7,3),-1)$ and $\B(\Aff(7,5),-1)$, both of dimension 326592.
\end{enumerate}
Affine racks are $2$-transitive: for each $c\in \mathbb{F}_q^\times, r\in \mathbb{F}_q$, the bijective map $x\mapsto cx+r$ defines an automorphism of $\Aff(q,\alpha)$.
\end{exa}

We have an action of the braid group $$B_n=\langle \sigma_1,\dots,\sigma_{n-1}|\sigma_i\sigma_{i+1}\sigma_i=\sigma_{i+1}\sigma_i\sigma_{i+1},\ \sigma_i\sigma_j=\sigma_j\sigma_i\ \forall |i-j|>1\rangle$$ on $X^{n}$ by $\sigma_i(x_1,\dots,x_n)=(x_1,\dots,x_i\triangleright x_{i+1},x_i,\dots,x_n)$. Thus $c_i \in \GL(V^{\otimes n})$ is given by the action of $\sigma_i$ on pure tensors, twisted by the cocycle $q$. The orbits of $B_n\curvearrowright X^n$ are called \emph{Hurwitz orbits}.

\section{Point modules of Nichols algebras of braided racks}

\subsection{Braided racks}

Following~\cite{zbMATH06050325}, we say that a 
rack $X$ is \emph{braided} if it is a quandle ($x \triangleright x=x$ for all $x\in X$) and, for all $x, y\in X$, either $x\triangleright (y\triangleright x)=y$ or $x\triangleright y = y$. 
A conjugacy-invariant subset of a group forms a braided rack if and only if every two elements of it either commute or satisfy the braid relation $xyx=yxy$.

The following is brought here for the reader's convenience.
\begin{prop} \label{prop:3}
    Let $(X,\triangleright)$ be a braided rack of conjugacy type. Then all Hurwitz orbits of $X^2$ are of size at most $3$.
\end{prop}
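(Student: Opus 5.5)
The Hurwitz orbit of $(x,y)\in X^2$ is the orbit of $\sigma_1^{\pm1}$ acting on $X^2$, since $B_2=\langle\sigma_1\rangle\cong\mathbb{Z}$. Here
\[
\sigma_1(x,y)=(x\triangleright y,\,x).
\]
So the orbit is finite and equals $\{\sigma_1^k(x,y):k\geq 0\}$; it suffices to show that $\sigma_1^3(x,y)=(x,y)$ in all cases. We use that $X$ is a conjugacy-invariant subset of a group, with $x\triangleright y=xyx^{-1}$. The braided hypothesis says that any two elements $x,y$ of $X$ either commute or satisfy $xyx=yxy$. We split into these two cases.

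\emph{Commuting case.} If $xy=yx$, then $x\triangleright y=y$, so $\sigma_1(x,y)=(y,x)$. Also $y\triangleright x=x$, so $\sigma_1^2(x,y)=(x,y)$. The orbit is $\{(x,y),(y,x)\}$, of size at most $2$, and of size $1$ exactly when $x=y$.

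\emph{Braid case.} If $xyx=yxy$, compute directly:
\[
\sigma_1(x,y)=(xyx^{-1},x),\qquad
\sigma_1^2(x,y)=\bigl((xyx^{-1})x(xyx^{-1})^{-1},\,xyx^{-1}\bigr)=(xyxy^{-1}x^{-1},\,xyx^{-1}).
\]
The braid relation gives $xyx\,y^{-1}x^{-1}=yxy\,y^{-1}x^{-1}=y$, so $\sigma_1^2(x,y)=(y,xyx^{-1})$. Applying $\sigma_1$ once more gives first coordinate $y(xyx^{-1})y^{-1}=yxy\,x^{-1}y^{-1}=xyx\,x^{-1}y^{-1}=x$, again by the braid relation. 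The second coordinate is $y$. Hence $\sigma_1^3(x,y)=(x,y)$, and the orbit has at most $3$ elements.

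The only step with real content is this braid-case computation: it is exactly where the relation $xyx=yxy$ collapses the iterates, and it is used twice, in the forms $xyxy^{-1}x^{-1}=y$ and $yxyx^{-1}y^{-1}=x$. Everything else is bookkeeping. Equivalently, in rack language, $\sigma_1^2(x,y)=(y,x\triangleright y)$ uses $x\triangleright(y\triangleright x)=y$ together with self-distributivity.
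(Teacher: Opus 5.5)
Your proof is correct and follows the paper's argument. It uses the same case split (commuting pair versus braided pair) and the same three-step computation of $\sigma_1$-iterates, and your identities $xyxy^{-1}x^{-1}=y$ and $yxyx^{-1}y^{-1}=x$ are exactly the paper's $(x\triangleright y)\triangleright x=y$ and $y\triangleright(x\triangleright y)=x$, checked in the ambient group rather than via self-distributivity (your opening claim that the orbit is finite is unnecessary, since $\sigma_1^3(x,y)=(x,y)$ already forces the orbit to be $\{(x,y),\sigma_1(x,y),\sigma_1^2(x,y)\}$).
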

\begin{proof}
Fix $(x,y)\in X^2$. If $x\triangleright y=y$ then $y\triangleright x=x$. Thus $\sigma_1(x,y)=(x \triangleright y,x)=(y,x)$ and $\sigma_1(y,x)=(y\triangleright x,y)=(x,y)$, so the Hurwitz orbit of $(x,y)$ is of size at most two.

If $x\triangleright y\neq y$, then $x\triangleright(y\triangleright x)=y$. By \cite[Lemma~1.10]{zbMATH06050325}, $(x\triangleright y)\triangleright x=y$ and $y\triangleright(x\triangleright y)=x$. Indeed, $$(x \triangleright y) \triangleright x = (x \triangleright y) \triangleright (x \triangleright x)=x \triangleright (y \triangleright x)=y$$ and 
$$y \triangleright (x \triangleright y) = (y \triangleright x) \triangleright (y \triangleright y)=(y \triangleright x) \triangleright y = x.$$
Hence
$$(x,y)\xmapsto{\sigma_1} (x \triangleright y, x) \xmapsto{\sigma_1} ((x \triangleright y)\triangleright x, x \triangleright y)=(y,x \triangleright y)\xmapsto{\sigma_1} (y\triangleright (x \triangleright y),y)=(x,y)$$
and the Hurwitz orbit of $(x,y)$ is of size three.
\end{proof}

As mentioned in \cite{zbMATH06050325}, an affine rack $\Aff(q,\alpha)$ is braided if and only if $\alpha^2-\alpha+1=0$. For instance, $\Aff(7,3)$ and $\Aff(7,5)$ are braided, while $\Aff(5,2)$ and $\Aff(5,3)$ are not. 
The racks $\Tau=(123)^{\mathbb{A}_4},\ (12)^{\mathbb{S}_n},\ (1234)^{\mathbb{S}_4}$ are all braided. (In fact, $\Tau\cong \Aff(\mathbb{F}_4,\omega)$ where $\omega$ is a primitive element of $\mathbb{F}_4$.)


\begin{prop} \label{prop:conj S_n braided}
The rack $(12)^{\mathbb{S}_n}$ is braided. The rack $(1234)^{\mathbb{S}_4}$ is braided.
\end{prop}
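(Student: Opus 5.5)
Since both racks are conjugacy classes in a group, the operation is conjugation, and they are quandles. By the remark following Proposition \ref{prop:3}, a conjugacy-invariant subset is braided exactly when any two of its elements either commute or satisfy the braid relation $xyx=yxy$. Indeed, $x\triangleright(y\triangleright x)=y$ reads $xyxy^{-1}x^{-1}=y$, which is $xyx=yxy$. So I will check, for every pair $x,y$ in the class, that either $xy=yx$ or $xyx=yxy$.

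For $(12)^{\mathbb{S}_n}$ I take two transpositions $x,y$ and split according to the size of $\mathrm{supp}(x)\cap\mathrm{supp}(y)$. If the supports are disjoint, $x$ and $y$ commute. If they share two points, $x=y$ and they commute trivially. If they share exactly one point, I relabel so that $x=(ab)$ and $y=(bc)$. Then both $xyx$ and $yxy$ equal $(ac)$: the transposition $x$ conjugates $(bc)$ to $(ac)$, and $y$ conjugates $(ab)$ to $(ac)$. So the braid relation holds.

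For $(1234)^{\mathbb{S}_4}$ the class consists of the six $4$-cycles. Fix $x$. Besides $x$ itself, $x^{-1}$ commutes with $x$, which leaves four cycles $y$ with $y\neq x^{\pm1}$. For these I need $xyx=yxy$, equivalently $x\triangleright y=y^{-1}\triangleright x$. I plan to reduce the number of cases using the conjugation action of $\mathbb{S}_4$, which preserves the class. The centralizer of $x=(1234)$ is $\langle x\rangle$, and $(13)$ normalizes $\langle x\rangle$ by inverting $x$. Together these generate a dihedral group of order $8$ that fixes $x$ up to inversion. Since $xyx=yxy$ is equivalent to $x^{-1}y^{-1}x^{-1}=y^{-1}x^{-1}y^{-1}$, inverting both elements is harmless. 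So it should suffice to check one representative, for example $y=(1243)$.

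That check is a direct computation: verify $(1234)(1243)(1234)=(1243)(1234)(1243)$. Both sides should equal the transposition $(14)$ or a similar one; the thing to confirm is that the two products agree. The main risk, and the only real obstacle, is the orbit reduction: I must make sure the four non-commuting $4$-cycles form a single orbit under this dihedral symmetry, counting inversion. If that fails, I will simply check the remaining one or two representatives by hand, which is a finite computation.

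A cleaner alternative I would try first is to use the surjection $\mathbb{S}_4\to\mathbb{S}_3$ together with the fact that $4$-cycles are odd permutations. The product of two distinct non-inverse $4$-cycles lies in $\mathbb{A}_4$. I don't expect this to be shorter than direct verification, so I would settle on the explicit computation.
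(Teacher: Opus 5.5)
Your proposal is correct and is essentially the paper's argument. The paper also reduces to $\sigma=(1234)$ and notes that $\sigma^{-1}$ commutes with it. It then observes that $\langle\sigma\rangle$ alone already acts transitively on the remaining four $4$-cycles, so the orbit worry you flag does not arise, and checks only the single pair $\sigma,(1324)$, where both sides equal $(23)$; for transpositions it just cites the well-known fact that you prove by splitting on the supports. For your chosen representative the common value is $(34)$, not $(14)$: $(1234)(1243)(1234)=(34)=(1243)(1234)(1243)$.
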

\begin{proof}
That transpositions either commute or satisfy the braid relation is a well-known fact about the symmetric groups.
For $(1234)^{\mathbb{S}_4}$, notice that it splits into three pairs of $4$-cycles and their inverses. To verify the claim, we may assume, by transitivity of $\mathbb{S}_4\curvearrowright (1234)^{\mathbb{S}_4}$, that one of the permutations is $\sigma=(1234)$. It evidently commutes with $\sigma^{-1}$, and $\langle \sigma \rangle$ acts transitively on the four permutations $(1234)^{\mathbb{S}_4}\setminus \{\sigma,\sigma^{-1}\}$ (indeed, $(1324)^\sigma=(1243),(1324)^{\sigma^2}=(1423),(1324)^{\sigma^{3}}=(1342)$). Hence it suffices to verify that for $\pi=(1324)$ we have
$\sigma\pi\sigma=(23)=\pi\sigma\pi$.
\end{proof}

\subsection{Nichols algebras of braided racks}
We focus on the Nichols algebra $\B(X,q)$ of a braided rack $X$ with respect to a $2$-cocycle $q$, over an arbitrary base field $K$.

We start with the quadratic approximation of $\B(X,q)$, i.e.~the associative algebra covering it defined by quadratic relations solely. We use the formula from \cite{zbMATH05942621}, where it is stated and applied only for fields of characteristic zero. However, the formula also holds for arbitrary characteristics. For further information on the space of quadratic relations over arbitrary fields, we refer to \cite{zbMATH05918254}.


\begin{lem}[{\cite{zbMATH05942621,zbMATH05918254}}] 
\label{lem:S2}
    Let $(X,\triangleright)$ be a finite rack and let $q$ be a $2$-cocycle. Suppose that for each Hurwitz orbit $C\subseteq X^2$, we have $\prod_{(x,y)\in C} q_{xy}=(-1)^{|C|}$. Then there exists a linear combination of all $2$-fold tensors corresponding to the elements in $C$ with non-zero coefficients, lying in the kernel of the quantum symmetrizer $\mathcal{S}_2=I+c_1$.
    
    Moreover, if $q\equiv -1$ is a constant cocycle, then we can take the coefficients in that linear combination to be equal to one.
\end{lem}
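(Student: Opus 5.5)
The plan is to construct the element explicitly, orbit by orbit, using the fact that $c=c_1$ permutes pure tensors up to scalars exactly along the Hurwitz action. Fix a Hurwitz orbit $C\subseteq X^2$. Since $\sigma_1$ generates the braid group $B_2$, which acts on $X^2$ by a single bijection, $C$ is a single cycle. Write it as
\[
(x_0,y_0)\mapsto (x_1,y_1)\mapsto\cdots\mapsto (x_{m-1},y_{m-1})\mapsto (x_0,y_0),
\qquad (x_{i+1},y_{i+1})=(x_i\triangleright y_i,x_i),
\]
with $m=|C|$ and indices taken mod $m$. Set $q_i=q_{x_iy_i}$. Then $c(x_i\otimes y_i)=q_i\,x_{i+1}\otimes y_{i+1}$, so $W_C=\Span\{x_i\otimes y_i\}$ is $c$-stable. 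Since $V\otimes V=\bigoplus_C W_C$, the kernel of $\mathcal S_2$ splits accordingly, and it suffices to work inside each $W_C$.

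We look for $w=\sum_i a_i\,x_i\otimes y_i$ with $c(w)=-w$. Comparing coefficients of $x_{i+1}\otimes y_{i+1}$ gives the recursion $a_i q_i=-a_{i+1}$, that is $a_{i+1}=-q_ia_i$. Starting from $a_0=1$, we set $a_k=\prod_{i<k}(-q_i)$ for $0\le k\le m-1$. All of these are nonzero because $q$ takes values in $K^\times$.

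The only constraint left is the closing condition at the end of the cycle, $a_0=-q_{m-1}a_{m-1}$. This says $\prod_{i=0}^{m-1}(-q_i)=1$, which is equivalent to $\prod_{(x,y)\in C}q_{xy}=(-1)^{m}$, exactly the hypothesis. Hence $w\neq0$ has all coefficients nonzero and satisfies $(I+c_1)(w)=0$. The argument is characteristic-free: $w$ is a genuine eigenvector for $-1$, so no division by $2$ occurs.

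For the "moreover" part, take $q\equiv-1$. Then every factor $-q_i$ equals $1$, so $a_k=1$ for all $k$, and the hypothesis holds automatically. Concretely, $w=\sum_{(x,y)\in C}x\otimes y$, and $c(w)=-\sum x_{i+1}\otimes y_{i+1}=-w$ because $c$ permutes the orbit.

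The only step needing care is that the orbit really is a single cycle, so that the recursion closes up exactly once. This holds because $B_2\cong\mathbb Z$, and $\sigma_1$ acts bijectively on $X^2$ since each $\varphi_x$ is bijective. Beyond that, the argument is bookkeeping.
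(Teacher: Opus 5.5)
Your proposal is correct and follows essentially the same route as the paper: both treat the Hurwitz orbit as a single cycle, solve the recursion $a_{i+1}=-q_i a_i$ from $a_0=1$, and observe that the closing condition is exactly the hypothesis $\prod_{(x,y)\in C} q_{xy}=(-1)^{|C|}$. With $q\equiv -1$, every factor $-q_i$ equals $1$, so all coefficients equal $1$. Your added justification that the orbit is a single cycle, because $B_2$ acts through one bijection of the finite set $X^2$, is a detail the paper leaves implicit.
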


\begin{proof}
Let 
\[
C=\{(x_1,x_0),\dots,(x_m,x_{m-1})\},
\]
where $\sigma_1(x_{i},x_{i-1})=(x_{i+1},x_{i})$ for $1\leq i\leq m-1$ and $\sigma_1(x_m,x_{m-1})=(x_1,x_0)$. Let $\alpha_1=1$ and $\alpha_{i+1}=-\alpha_i q_{x_{i} x_{i-1}}$ for $1\leq i\leq m-1$. Thus \[
\alpha_m=(-1)^{m-1}q_{x_1 x_0}\cdots q_{x_{m-1} x_{m-2}}.
\]
Let $\xi=\sum_{i=0}^{m-1} \alpha_{i+1}(x_{i+1}\otimes x_i)$. Then
\begin{align*}
c(\xi) & = \sum_{i=0}^{m-1} \alpha_{i+1} c(x_{i+1}\otimes x_i) \\ & = \sum_{i=1}^{m-1} \alpha_{i} q_{x_{i} x_{i-1}} (x_{i+1}\otimes x_{i}) + \alpha_{m}q_{x_m x_{m-1}}(x_1\otimes x_0) \\ & = \sum_{i=1}^{m-1} -\alpha_{i+1} (x_{i+1}\otimes x_{i}) + (-1)^{m-1}q_{x_1 x_0}\cdots q_{x_{m-1} x_{m-2}}q_{x_m x_{m-1}}(x_1\otimes x_0) \\ & = \sum_{i=1}^{m-1} -\alpha_{i+1} (x_{i+1}\otimes x_{i}) - x_1 \otimes x_0 = -\xi
\end{align*}
as required. Notice that, if $q\equiv -1$ is a constant cocycle, then $\xi=\sum_{(x,y)\in C} x\otimes y$ as $\alpha_1=1$ and by induction $\alpha_{i+1}=-\alpha_iq_{x_i x_{i-1}}=-(-1)=1$.
\end{proof}

\begin{prop} 
\label{prop:braided}
Let $X$ be a finite rack and let $q$ be a $2$-cocycle satisfying $q_{xx}=-1$ for all $x\in X$. Suppose that for every $\emptyset \neq S\subseteq X$ and $t\in S^c = X\setminus S$ there exists some $s\in S$ such that, denoting the Hurwitz orbit of $(t,s)$ by $C$,
\begin{enumerate}
    \item $C \cap \left(S^c\times S\right)=\{(t,s)\}$
    \item $\prod_{(x,y)\in C} q_{xy} = (-1)^{|C|}$.
\end{enumerate}
Then $\Gamma_2(\B(X,q))=\emptyset$.
\end{prop}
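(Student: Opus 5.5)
Suppose, for contradiction, that $M$ is a $2$-truncated point module over $\B(X,q)$ with basis $m_0,m_1,m_2$ in degrees $0,1,2$. Write $x\cdot m_0=\lambda_x m_1$ and $x\cdot m_1=\mu_x m_2$ for $x\in X$. Then any quadratic relation $\sum a_{xy}\,x y=0$ holding in $\B(X,q)$, that is, any element $\sum a_{xy}\, x\otimes y$ of $\ker \mathcal{S}_2$, forces
\[
\sum_{x,y} a_{xy}\mu_x\lambda_y=0 .
\]
Cyclicity gives $\lambda\neq 0$ and $\mu\neq 0$, since $M_1=Xm_0$ and $M_2=XM_1$. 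The strategy is to choose relations cleverly and derive $\mu=0$.

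First, the hypothesis $q_{xx}=-1$ gives $c(x\otimes x)=-x\otimes x$. Thus $x\otimes x\in\ker\mathcal{S}_2$, so $x^2=0$ in $\B(X,q)$. Hence $\mu_x\lambda_x=0$ for every $x$.

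Next, set $S=\{y\in X:\lambda_y\neq 0\}$. This set is nonempty, and by the previous step $\mu_s=0$ for all $s\in S$. Take any $t\in S^c$. The hypothesis supplies $s\in S$ whose Hurwitz orbit $C$ of $(t,s)$ satisfies conditions (1) and (2). By condition (2) and Lemma~\ref{lem:S2}, there is an element
\[
\xi=\sum_{(x,y)\in C}\alpha_{xy}\,x\otimes y\in\ker\mathcal{S}_2
\]
with all $\alpha_{xy}\neq0$. Applying it to $m_0$ gives $\sum_{(x,y)\in C}\alpha_{xy}\mu_x\lambda_y=0$. We check which terms survive:
\begin{itemize}
\item if $y\in S^c$, then $\lambda_y=0$;
\item if $x\in S$, then $\mu_x=0$.
\end{itemize}
So only pairs in $C\cap(S^c\times S)$ contribute. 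By condition (1) this intersection is $\{(t,s)\}$, and the equation reduces to $\alpha_{ts}\mu_t\lambda_s=0$. Since $\alpha_{ts}\neq0$ and $\lambda_s\neq0$, we get $\mu_t=0$.

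Therefore $\mu\equiv0$ on $S$ and on $S^c$, so $XM_1=0$, contradicting $M_2\neq0$. Hence $\Gamma_2(\B(X,q))=\emptyset$.

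The only delicate point is the bookkeeping that $\ker\mathcal{S}_2$ is exactly the space of degree-two relations of $\B(X,q)$, so that these relations really act on $M$. This holds by the definition of the Nichols algebra. The main step is choosing $S$ as the support of $\lambda$, which makes condition (1) isolate a single term.
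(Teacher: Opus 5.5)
Your proof is correct and is essentially the paper's argument. You take $S$ to be the support of the degree-zero action, use $x^2=0$ to kill $\mu$ on $S$, and then use conditions (1)--(2) with Lemma~\ref{lem:S2} to isolate the single term $(t,s)$. The only difference is presentational: the paper fixes one $t$ with $t\cdot e_1\neq 0$ and derives a contradiction, whereas you show $\mu_t=0$ for every $t\in S^c$ directly.
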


\begin{proof}
Write $\B=\B(X,q)=\kk\left<X\right>/I$ and notice that $x^2\in I$ for all $x\in X$, since $$(I+c_1)(x\otimes x)=(1 + q_{x x}) (x\otimes x) = 0.$$
Suppose that $M=Ke_0\oplus Ke_1\oplus Ke_2$ is a truncated point module of degree two over $\B(X,q)$ and let $S=\{x\in X|x\cdot e_0\neq 0\}$. Since $M$ is generated in degree zero, we have that $S\neq \emptyset$. Notice that for every $s\in S$, we have that $s\cdot e_0=\lambda_s e_1$ for some $\lambda_s\neq 0$, so $s\cdot e_1=\lambda_s^{-1} s^2\cdot e_0=0$. Hence, again since $M$ is generated in degree zero, we have that $S\subsetneq X$ (for otherwise, $e_2\notin \B \cdot e_0$).
Fix $t\in X$ such that $t\cdot e_1\neq 0$ (so in particular, $t\notin S$). By assumption (1), there exists some $s\in S$ such that, writing the Hurwitz orbit of $(t,s)$ as
$$C = \{(t,s),(u_1,t),\dots,(u_{m-2},u_{m-3}),(s,u_{m-2})\}$$
we have that, except for $(t,s)$, all of the above pairs do not belong to $S^c \times S$. This means that $u_{i+1}u_i\cdot e_0=0$ for all $0\leq i\leq m-2$ (where $u_0=t,u_{m-1}=s$). Indeed, if $(u_{i+1},u_i)\notin S^c\times S$ then either $u_i\notin S$, so $u_i\cdot e_0=0$, or $u_i\in S$ but also $u_{i+1}\in S$, so $u_{i+1}e_1=0$ and so $u_{i+1}u_i\cdot e_0=\lambda_{u_i} u_{i+1} \cdot e_1=0$.
By assumption (2) and by Lemma \ref{lem:S2}, there exist coefficients $\alpha_0,\dots,\alpha_{m-1}\in K^\times$ such that:
$$ \alpha_0 ts + \alpha_1 u_1t + \cdots \alpha_{m-1} su_{m-2} = 0 $$
so, acting on $e_0$, we see that $\alpha_0 ts\cdot e_0=0$, contradicting that $s\cdot e_0\neq 0,\ t\cdot e_1\neq 0$.
\end{proof}

\begin{cor} \label{cor:braided}
Let $(X,\triangleright)$ be a braided rack and let $q\equiv -1$, a constant $2$-cocycle. Then $\Gamma_2(\B(X,q))=\emptyset$.
\end{cor}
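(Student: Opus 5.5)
We deduce the corollary from Proposition \ref{prop:braided}. Two hypotheses need checking: $q_{xx}=-1$ for all $x$, and the combinatorial condition (1)--(2) on Hurwitz orbits. The first is immediate since $q\equiv -1$.

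For condition (2), every Hurwitz orbit $C$ gives $\prod_{(x,y)\in C}q_{xy}=(-1)^{|C|}$ automatically. So (2) holds for every choice of $s$, and everything reduces to condition (1).

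Fix $\emptyset\neq S\subsetneq X$ and $t\in S^c$; we must find $s\in S$ with $C\cap(S^c\times S)=\{(t,s)\}$. By Proposition \ref{prop:3}, the orbit of $(t,s)$ is one of two kinds.

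If $t\triangleright s=s$ (so $t,s$ commute), then $C=\{(t,s),(s,t)\}$. Since $(s,t)\in S\times S^c$, it is not in $S^c\times S$, and (1) holds.

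If $t\triangleright s\neq s$, then by the proof of Proposition \ref{prop:3} we have
\[
C=\{(t,s),(t\triangleright s,t),(s,t\triangleright s)\}.
\]
The pair $(t\triangleright s,t)$ has second coordinate $t\notin S$, so it lies outside $S^c\times S$. The pair $(s,t\triangleright s)$ has first coordinate $s\in S$, so it also lies outside $S^c\times S$. Hence (1) holds again.

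In fact any $s\in S$ works, so no clever choice is required. Since $S\neq\emptyset$ such an $s$ exists, and Proposition \ref{prop:braided} gives $\Gamma_2(\B(X,q))=\emptyset$.

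The only points that need care are the explicit orbit descriptions and that commuting is symmetric. The orbits come from Proposition \ref{prop:3}, which is where the braided hypothesis and conjugacy type are used. The symmetry holds for racks of conjugacy type, by the Remark in Section 2.
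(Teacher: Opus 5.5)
Your proposal is correct and follows essentially the same argument as the paper. You verify condition (2) trivially from $q\equiv -1$, take an arbitrary $s\in S$, and use the two orbit shapes from Proposition~\ref{prop:3}, $\{(t,s),(s,t)\}$ or $\{(t,s),(t\triangleright s,t),(s,t\triangleright s)\}$, to see that $(t,s)$ is the only pair in $S^c\times S$. This is exactly the paper's check of Proposition~\ref{prop:braided}.
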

\begin{proof}
Let us verify the conditions of Proposition \ref{prop:braided}. Notice that condition (2) is immediate since $q\equiv -1$. Let $S\subseteq X,t\in S^c$ be given. Take $s\in S$ arbitrary.
Let $C$ denote the Hurwitz orbit of $(t,s)$. By Proposition \ref{prop:3}, $C$ either takes the form $\{(t,s),(s,t)\}$ or $\{(t,s),(u,t),(s,u)\}$. Since $s\in S,t\in S^c$, it follows that $(s,t),(u,t),(s,u)\notin S^c\times S$, so in either case, $C\cap (S^c \times S)=\{(t,s)\}$.
\end{proof}


The racks $\Aff(5,2)$ and $\Aff(5,3)$ are not braided, but 
they still satisfy the first condition of Proposition \ref{prop:braided}, namely, that for every $S\neq \emptyset$ and $t\notin S$ there exists $s\in S$ such that the Hurwitz orbit of $(t,s)$, denoted below $C$, intersects $S^c\times S$ only at $(t,s)$ itself.


\begin{exa}
\label{ex:5 2}
Let $X=\Aff(5,2)$. A direct calculation shows that 
the off-diagonal part of $X^2$ partitions to orbits as follows:
\begin{align*}
    \{(0,1),(2,0),(3,2),(1,3)\}, && 
    \{(0,2),(4,0),(1,4),(2,1)\}, \\
    \{(0,3),(1,0),(4,1),(3,4)\}, &&
    \{(0,4),(3,0),(2,3),(4,2)\}, \\
    \{(1,2),(3,1),(4,3),(2,4)\}.
\end{align*}
Let $S\subseteq \Aff(5,2),t\in S^c$. To verify the desired condition, notice that if $|S|=1$ (respectively, $|S|=4$) then the condition holds automatically for any $s\in S$, as then the right (resp., left) coordinate of $S^c\times S$ is fixed, and the projection of every orbit onto either its left or right coordinates is a bijection. If $|S|=2$ then by $2$-transitivity we may assume that $S=\{0,1\}$. For $t=2$, take $s=0$; we have $C\cap (S^c\times S)=\{(2,0)\}$. For $t=3$, take $s=0$; we have $C\cap (S^c\times S)=\{(3,0)\}$. For $t=4$, take $s=1$; we have $C\cap (S^c\times S)=\{(4,1)\}$. 
If $|S|=3$, again by $2$-transitivity, we may assume that $S^c=\{0,1\}$ and that $t=0$. Then for $s=3$ we get $C\cap \left(S^c \times S\right)=\{(0,3)\}$.

\end{exa}

\begin{exa}
\label{ex:5 3}
Let $X=\Aff(5,3)$. 
The off-diagonal part of $X^2$ partitions to orbits as follows:
\begin{align*}
    \{(0,1),(3,0),(4,3),(1,4)\}, &&
    \{(0,2),(1,0),(3,1),(2,3)\}, \\
    \{(0,3),(4,0),(2,4),(3,2)\}, &&
    \{(0,4),(2,0),(1,2),(4,1)\}, \\
    \{(1,3),(2,1),(4,2),(3,4)\}.
\end{align*}
We argue as in Example~\ref{ex:5 2}. Suppose that $S=\{0,1\}$. If $t=2$, we can take $s=1$; if $t=3$, take $s=0$; for $t=4$, take $s=0$. If $S^c=\{0,1\}$ and $t=0$ then $s=2$ works.
\end{exa}

Finally, we notice that the affine rack 
$\Aff(5,4)$ does not satisfy Condition (1) of Proposition~\ref{prop:braided}.

\begin{exa}
Let $X=\Aff(5,4)$. Consider the set $S=\{0,1\}\subseteq X$ and $t=3$. The orbit of $(3,0)$ is 
$$\{(3,0),(1,3),(4,1),(2,4),(0,2)\},$$ which intersects $S^c \times S$ at $\{(3,0),(4,1)\}$; and the orbit of $(3,1)$ is $$\{(3,1),(0,3),(2,0),(4,2),(1,4)\},$$ which intersects $S^c\times S$ at $\{(3,1),(2,0)\}$.
\end{exa}


\section{Finite-dimensional Nichols algebras}

We recall the following parametrization of the space of point modules of a given graded algebra. Let $A=K\left<x_1,\dots,x_d\right>/\left<f_1,f_2,\dots\right>$ be a graded algebra. Given an $n$-truncated point module $M$ over $A$, fix a homogeneous basis $M=K e_0\oplus \cdots \oplus K e_n$ and encode the action of $A$ on $M$ by parameters $\lambda_{i,j}\in K,\ 1\leq i\leq d,\ 0\leq j\leq n-1$ such that:
\[
x_i \cdot e_j = \lambda_{i,j} e_{j+1}.
\]
Thus $([\lambda_{1,0}\colon \cdots \colon \lambda_{d,0}],\dots,[\lambda_{1,n-1}\colon \cdots \colon \lambda_{d,n-1}])\in \mathbb{P}^{d-1}\times \cdots \times \mathbb{P}^{d-1}$. The points in this product of projective spaces corresponding to truncated point modules can be found as the zero locus of the following multilinear system of equations: each relation $f=\sum_I c_Ix_{i_1}\cdots x_{i_t}$ induces the multilinear equations $\sum_I c_I \lambda_{i_1,s+t-1}\cdots \lambda_{i_t,s}=0$ for each $0\leq s\leq n-t$. The set of points in $\left(\mathbb{P}^{d-1}\right)^{\times n}$ solving these multilinearized polynomial equations is in bijection with $\Gamma_n(A)$, the set of $n$-truncated point modules over $A$. 
In particular, the $n$-truncated point modules over $A$ coincide with the $n$-truncated point modules over the cover of $A$ given by relations of degree at most $n$.
See \cite[Proposition 3.9]{ATV} and \cite{Rogalski} for more details.

\begin{prop} \label{prop:cubic roots}
Let $K$ be a field of characteristic $2$, containing $\mathbb{F}_4$. Let $X=\Aff(3,-1)\cong (12)^{\mathbb{S}_3}$, $q\equiv \zeta\in K$ a primitive cubic root of unity. Then $\Gamma_2(\B(X,q))$ is a rational surface and $\Gamma_3(\B(X,q))=\emptyset$.    
\end{prop}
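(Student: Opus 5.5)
We will use the multilinear parametrization of truncated point modules recalled just above. This requires explicit relations of $\B(X,q)$ in degrees $2$ and $3$. Write the generators as $x_0,x_1,x_2$ with $x_i\triangleright x_j=x_{2i-j}$, indices mod $3$, and $c(x_i\otimes x_j)=\zeta\, x_{2i-j}\otimes x_i$.

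\textbf{Step 1: quadratic relations.} By Proposition~\ref{prop:3}, $X^2$ splits into three diagonal orbits $\{(i,i)\}$ and one orbit of size $3$, namely $\{(0,1),(2,0),(1,2)\}$ (and similarly for the other orientation, giving two off-diagonal orbits of size $3$).

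On a diagonal orbit, $(I+c)(x_i\otimes x_i)=(1+\zeta)x_i^2$. This is nonzero since $\zeta\neq -1$, so there is no relation there.

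On an orbit of size $3$, the product of the cocycle values is $\zeta^3=1$. In characteristic $2$ we have $1=(-1)^3$, so Lemma~\ref{lem:S2} applies and gives one relation per orbit, with coefficients $1,\zeta,\zeta^2$ in suitable order. The map $c$ acts on the $3$-dimensional span of such an orbit as $\zeta$ times a $3$-cycle. Its eigenvalues are $\zeta\cdot\{1,\zeta,\zeta^2\}=\{\zeta,\zeta^2,1\}$, and exactly one of them equals $-1=1$. Hence each orbit contributes exactly one relation, and $\dim\ker\mathcal{S}_2=2$. The two relations have the form
\[
r_1=x_1x_0+a\,x_0x_2+b\,x_2x_1,\qquad r_2=x_0x_1+a'\,x_2x_0+b'\,x_1x_2,
\]
with $a,b,a',b'$ powers of $\zeta$.

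\textbf{Step 2: $\Gamma_2$.} A point of $\Gamma_2$ is a pair $(\lambda,\mu)\in\PP^2\times\PP^2$, where $\lambda$ records the action on $e_0$ and $\mu$ the action on $e_1$. The pair must satisfy the two bilinear equations obtained from $r_1,r_2$. Thus $\Gamma_2$ is the intersection of two $(1,1)$-divisors in $\PP^2\times\PP^2$.

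For generic $\lambda$, the two conditions on $\mu$ are independent linear forms, so they determine a unique $\mu$. The projection $\Gamma_2\to\PP^2$ is therefore birational, and $\Gamma_2$ is a rational surface; it is $\PP^2$ blown up at the points where the two linear forms become dependent. To finish this step we check that the resulting $2\times3$ matrix of linear forms in $\lambda$ has generic rank $2$ and that no component of $\Gamma_2$ is a fibre over a point, so the intersection is an irreducible surface.

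\textbf{Step 3: cubic relations and $\Gamma_3=\emptyset$.} This is the main obstacle. Since $\dim\B=432$ and the quadratic algebra is larger, $\B$ must have genuinely new cubic relations. We must find enough of them, in particular cubic relations among the $x_i^3$ and mixed words, by computing $\ker\mathcal{S}_3$ on $V^{\otimes3}$ modulo the ideal generated by $r_1,r_2$.

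The first thing to try is the relation $x_i^3=0$. On $x_i^{\otimes3}$ the map $\mathcal{S}_3$ acts by $(1+\zeta)(1+\zeta+\zeta^2)=0$, using the quantum factorial $(3)_\zeta!$. So $x_i^3\in\ker\mathcal{S}_3$ for each $i$, which is plausible for a $432$-dimensional algebra.

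Now take a $3$-truncated point module with parameters $(\lambda,\mu,\nu)$, where $(\lambda,\mu)\in\Gamma_2$ and $(\mu,\nu)\in\Gamma_2$. The relation $x_i^3=0$ gives $\nu_i\mu_i\lambda_i=0$ for each $i$. We then do a case analysis on the supports of $\lambda,\mu,\nu$, using the explicit equations of Step 2 and any further cubic relations as needed. The argument mimics the proof of Proposition~\ref{prop:braided}, forcing some coordinate vector to vanish entirely.

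If the relations $x_i^3=0$ together with the quadratic relations do not suffice, we will compute the full $\ker\mathcal{S}_3$, which can be done by computer over $\mathbb{F}_4$. We then verify emptiness by eliminating variables, for instance by computing a Gröbner basis of the trilinear system in each affine chart.
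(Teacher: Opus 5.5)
\textbf{Steps 1 and 2.} These are sound. Step 1 is more self-contained than the paper, which takes the two quadratic relations and $a^3=b^3=c^3=0$ from a Magma computation. Your eigenvalue count ($c=\zeta P$ on the span of each off-diagonal orbit, exactly one eigenvalue equal to $1=-1$ there, and $1+\zeta\neq 0$ on the diagonal) gives $\dim\ker\mathcal{S}_2=2$. Your quantum-factorial check $\mathcal{S}_3(x_i^{\otimes 3})=(1+\zeta)(1+\zeta+\zeta^2)x_i^{\otimes 3}=0$ gives the cube relations. Together these derive exactly what is needed.

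Step 2 is the paper's argument with the other projection. The check you need is that the rank-one locus of your $2\times 3$ matrix is finite; it consists of three points. Once you know that, the dimension bound for an intersection of two divisors in $\PP^2\times\PP^2$ shows the $\PP^1$-fibres over those points cannot be components.

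\textbf{The gap is Step 3.} This step carries the entire content of $\Gamma_3=\emptyset$. You set up the trilinear system, then defer to an unspecified case analysis with a Gr\"obner-basis computation as fallback, so nothing you have written excludes a solution. Proposition~\ref{prop:braided} is also not the right template: its mechanism is $s^2=0$, which is unavailable here because $1+\zeta\neq 0$.

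\textbf{The missing idea} is a weight count that uses only $r_1,r_2$ and $x_i^3=0$, with no further cubic relations and no computer.
\begin{itemize}
\item Write $\lambda,\mu,\nu$ as the rows of a $3\times 3$ array. The relation $x_i^3=0$ puts a zero in every column, so at most two entries per column, and at most $6$ in total, are nonzero.
\item Each bilinear equation is a sum, with nonzero coefficients, of the terms $\mu_i\lambda_j$ over the pairs $(i,j)$ of one off-diagonal Hurwitz orbit. So each coordinate of $\lambda$ and of $\mu$ occurs exactly once in it. The two equations together contain every $\mu_i\lambda_j$ with $i\neq j$ exactly once, and the same holds for the equations in $(\mu,\nu)$.
\end{itemize}
This gives three cases.
\begin{itemize}
\item If $\wt(\mu)=1$ with support $\{j\}$, each equation collapses to a single term. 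This forces $\lambda_i=\nu_i=0$ for all $i\neq j$, so column $j$ contains no zero.
\item If $\wt(\mu)=2$, each equation keeps two terms linking two coordinates of $\lambda$, and the two equations link different pairs. Hence the nonzero vector $\lambda$ has full support, so $\wt(\lambda)=3$, and likewise $\wt(\nu)=3$. That gives $8$ nonzero entries.
\item If $\wt(\mu)=3$, a single equation shows $\lambda$ cannot have two vanishing coordinates. So $\wt(\lambda),\wt(\nu)\geq 2$, giving at least $7$ nonzero entries.
\end{itemize}
Every case violates the column condition, and this is precisely how the paper concludes.
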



\begin{proof}
Label the generators of $\B(X,q)$ as $a,b,c$. We computed, using Magma~\cite{zbMATH01077111}, that the quadratic defining relations of $\B(X,q)$ are
$$
ca + \zeta bc + \zeta^2ab,\  
 cb + \zeta^2ba + \zeta ac
$$
and there are further higher-degree relations, including $a^3=b^3=c^3=0$ in degree three.
Recall that $\Gamma_2(\B(X,q))$ is determined by the quadratic approximation of $\B(X,q)$:
$$K\left<a,b,c\right>/\left<ca + \zeta bc + \zeta^2ab,\  
 cb + \zeta^2ba + \zeta ac \right>$$
We can thus identify $\Gamma_2(\B(X,q))$ with
$$\left\{([\lambda_a\colon \lambda_b\colon \lambda_c],[\mu_a\colon \mu_b\colon \mu_c])\in \PP^2\times \PP^2\ \Big|\ \substack{\mu_c \lambda_a + \zeta \mu_b\lambda_c + \zeta^2 \mu_a\lambda_b = \\ 
 \mu_c\lambda_b + \zeta^2\mu_b\lambda_a + \zeta \mu_a\lambda_c = 0}\right\}.$$
 The fiber of the projection $\pr_2\colon \Gamma_2(\B(X,q))\rightarrow \PP^2$ is a singleton whenever the vectors $(\mu_c\ \zeta^2\mu_a\ \zeta \mu_b),(\zeta^2\mu_b\ \mu_c\ \zeta\mu_a)$ are linearly independent, since then $[\lambda_a\colon \lambda_b \colon \lambda_c]$ is given as the ($1$-dimensional) nullspace of the matrix
 $$
 \left(
\begin{matrix}
\mu_c & \zeta^2\mu_a & \zeta \mu_b \\
\zeta^2\mu_b & \mu_c & \zeta\mu_a
\end{matrix}
 \right).
 $$
Namely, the fiber of $\pr_2$ is a singleton away from the subset
 $$ \big\{ [\mu_a\colon \mu_b\colon \mu_c]\in \PP^2\ |\  \mu_a^2=\zeta \mu_b\mu_c,\ \mu_b^2=\zeta \mu_a\mu_c,\ \mu_c^2=\zeta \mu_a\mu_b \big\}. $$
 It can be easily seen that for any point in this set, $\mu_a,\mu_b,\mu_c\neq 0$ and hence, by the first equation, such a point must take the form $[\alpha\colon 1\colon \zeta^2 \alpha^2]$. By the second equation $\alpha^3=1$, and the third equation follows from the first two. Conversely, any point $[\alpha\colon 1\colon \zeta^2 \alpha^2],\ \alpha^3=1$ is in the above set. Hence $\pr_2^{-1}(p)$ is a singleton except for $p=[1\colon 1\colon \zeta^2],[\zeta\colon 1\colon \zeta],[\zeta^2\colon 1\colon 1]$, for each of which $\pr_2^{-1}$ is a copy of $\PP^1$. Hence $\pr_2$ restricted to $\Gamma_2(\B(X,q))$ minus a union of three lines induces an isomorphism onto $\PP^2$ minus three points, and $\Gamma_2(\B(X,q))$ is birational to $\mathbb{P}^2$.


We now move to $\Gamma_3(\B(X,q))$, which is contained in the set of points
$$ ([\lambda_a\colon \lambda_b\colon \lambda_c],
     [\mu_a\colon \mu_b\colon \mu_c],
     [\nu_a\colon \nu_b\colon \nu_c]) \in \PP^2\times \PP^2\times \PP^2
$$
satisfying
\begin{gather}
    \mu_c \lambda_a + \zeta \mu_b\lambda_c + \zeta^2 \mu_a\lambda_b = 0 \label{1} \\ 
    \mu_c\lambda_b + \zeta^2\mu_b\lambda_a + \zeta \mu_a\lambda_c = 0 \label{2} \\
    \nu_c \mu_a + \zeta \nu_b\mu_c + \zeta^2 \nu_a\mu_b = 0 \label{3} \\
    \nu_c\mu_b + \zeta^2\nu_b\mu_a + \zeta \nu_a\mu_c = 0 \label{4} \\ 
    \nu_a \mu_a \lambda_a = \nu_b \mu_b \lambda_b = \nu_c \mu_c \lambda_c = 0 \label{5}
\end{gather}
We arrange the coordinates of any such point in a table
\begin{center}
    \begin{tabular}{|l|l|l|}
    \hline
    $\lambda_a$ & $\lambda_b$ & $\lambda_c$ \\
    \hline
    $\mu_a$ & $\mu_b$ & $\mu_c$ \\
    \hline
    $\nu_a$ & $\nu_b$ & $\nu_c$ \\
    \hline
    \end{tabular}
\end{center}
By \eqref{5}, every column must have at least one zero entry. Every row has at least one non-zero entry (representing a point in $\PP^2$). 
Denote the number of non-zero coordinates of $\lambda,\mu,\nu$ by $\wt(\lambda),\wt(\mu),\wt(\nu)$, respectively.

Assume that $\wt(\mu) = 1$. Suppose $\mu_a \neq 0$ (respectively, $\mu_b \neq 0$ or $\mu_c \neq 0$). By constraints \eqref{1} and \eqref{2}, $\lambda_b = \lambda_c = 0$ (resp. $\lambda_a = \lambda_c = 0$ or $\lambda_a = \lambda_b = 0$) and by constraints \eqref{3} and \eqref{4}, $\nu_b = \nu_c = 0$ (resp. $\nu_a = \nu_c = 0$ or $\nu_a = \nu_b = 0$). But then both $\lambda_a$ and $\nu_a$ are non-zero (resp. $\lambda_b,\nu_b\neq 0$ or $\lambda_c,\nu_c\neq 0$), leading to a column of non-zero entries in the table, a contradiction. Hence $\wt(\mu)\geq 2$.

If $\wt(\mu) = 2$ then one of the pairs $(\mu_a, \mu_b)$, $(\mu_a, \mu_c)$, and $(\mu_b, \mu_c)$ has both components non-zero; suppose this is $(\mu_a, \mu_b)$ (the other cases are analogous). Constraints \eqref{1},\eqref{2} yield
\begin{gather}
    \zeta \mu_b\lambda_c + \zeta^2 \mu_a\lambda_b = 0 \\ 
    \zeta^2\mu_b\lambda_a + \zeta \mu_a\lambda_c = 0.
\end{gather}
If $\wt(\lambda) = 1$ then regardless of which one of $\lambda_a, \lambda_b, \lambda_c$ is non-zero, $\zeta^2\mu_b\lambda_a = 0$, $\zeta^2 \mu_a\lambda_b = 0$, or $\zeta \mu_a\lambda_c = 0$, respectively, so $\lambda_a=\lambda_b=\lambda_c=0$, a contradiction. Hence $\wt(\lambda) \geq 2$. If one of $\lambda_a, \lambda_b, \lambda_c$ is zero then $\zeta \mu_a\lambda_c = 0$, $\zeta \mu_b\lambda_c = 0$, or $\zeta^2 \mu_a\lambda_b = 0$, respectively, so at least one additional coordinate of $\lambda$ is zero, contradicting that $\wt(\lambda)\geq 2$. Thus, in fact, $\wt(\lambda) = 3$. 

Applying analogous arguments `forward' instead of `backwards,' namely, using \eqref{3},\eqref{4} instead of \eqref{1},\eqref{2}, we see that $\wt(\nu) = 3$ as well. Now there are $3 + 2 + 3$ non-zero values in the table, so by the pigeonhole principle, at least one of the columns must contain three non-zero entries, a contradiction. 

If $\wt(\mu) = 3$, we argue that none of $\wt(\lambda)$ and $\wt(\nu)$ can be equal to $1$. If two coordinates in $\lambda$ are zero, equation \eqref{1} forces the third one to be zero as well (since all $\mu_a,\mu_b,\mu_c\neq 0$); and if two coordinates in $\nu$ are zero, equation \eqref{3} forces the third one to be zero as well. 
Hence $\wt(\lambda),\wt(\nu)\geq 2$, so $\wt(\lambda)+\wt(\mu)+\wt(\nu)\geq 2+3+2=7$ and again by the pigeonhole principle, at least one of the three columns in the table has no zero entry, a contradiction. 

Since $\wt(\mu)$ cannot be 1, 2, or 3, it follows that $\Gamma_3(\B(X,q))=\emptyset$.
\end{proof}

The next proposition considers the rack $\mathcal{T}=(123)^{\mathbb{A}_4}$. The conjugation action $\mathbb{A}_4 \curvearrowright \Tau$ is faithful and isomorphic to the standard action of $\mathbb{A}_4$ on four items. 
There is also an affine description of this rack: $\Tau \cong \Aff(\mathbb{F}_4,\omega)$ where $\omega\in \mathbb{F}_4$ is a primitive element.


\begin{prop} \label{prop:5184}
Let $K$ be an arbitrary field containing a primitive cubic root of unity $\zeta$. Let $\Tau$ be equipped with a cocycle $(q_{xy})_{x,y}$ taking values $\pm \zeta$, such that the corresponding Nichols algebra $\B(\Tau,q)$ is $5184$-dimensional. Then $\Gamma_2(\B(\Tau,q))\neq \emptyset$ but $\Gamma_3(\B(\Tau,q))=\emptyset$.
\end{prop}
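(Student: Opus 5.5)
The proof follows the template of Proposition~\ref{prop:cubic roots}. First we determine the quadratic relations of $\B(\Tau,q)$, and then the relations of degree three that we actually need. Label the elements of $\Tau$ as $a,b,c,d$, corresponding to $(243),(123),(134),(142)$. We have $q_{xx}=\zeta$ for all $x$. Since $\zeta\ne -1$, the element $x\otimes x$ is not killed by $\mathcal{S}_2$, so $x^2\neq 0$; instead, $\mathcal{S}_3(x\otimes x\otimes x)=(1+\zeta+\zeta^2)(\ldots)=0$, which gives $x^3=0$ for every $x\in\Tau$.

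Off the diagonal, Proposition~\ref{prop:3} shows that every Hurwitz orbit in $\Tau^2$ has size $3$, because no two distinct $3$-cycles among these commute. There are $12/3=4$ such orbits. For each orbit $C$ we compute $\prod_{(x,y)\in C}q_{xy}$ from table~\eqref{eq:2cocycle}. It should equal $\zeta^3\cdot(\pm1)=\pm 1$, and the orbits whose product equals $(-1)^{3}=-1$ contribute a quadratic relation by Lemma~\ref{lem:S2}. I expect the $5184$-dimensional algebra to have exactly these relations in degree two, say two or four of them, each a three-term relation $\alpha\, xy+\beta\, zx+\gamma\, yz$ of the same shape as in Proposition~\ref{prop:cubic roots}. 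We can cross-check this against the known Hilbert series, or against a Magma computation.

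For $\Gamma_2\neq\emptyset$ it is enough to produce one explicit point $(\lambda,\mu)\in\PP^3\times\PP^3$ satisfying the bilinearized quadratic relations. Since there are few quadratic relations, a dimension count already gives a nonempty variety over the algebraic closure. To get a point defined over $K$, I would write one down concretely, for instance by choosing $\mu=e_a$ and solving the resulting linear system for $\lambda$. Each relation is a three-term relation, so most of its terms vanish when $\mu$ has a single nonzero coordinate, and a solution should be easy to find.

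For $\Gamma_3=\emptyset$ we again use a table with rows $\lambda,\mu,\nu\in\PP^3$. The relations $x^3=0$ force every column to contain a zero. The bilinear equations coming from the quadratic relations, applied to the pairs $(\lambda,\mu)$ and $(\mu,\nu)$, constrain the supports. The argument proceeds by case analysis on $\wt(\mu)\in\{1,2,3,4\}$, using the $\mathbb{A}_4$-symmetry, which preserves the cocycle up to equivalence, to reduce the number of cases. The aim in each case is to show that $\wt(\lambda)$ and $\wt(\nu)$ are forced to be large, so that by pigeonhole some column with four entries available has no zero, or else that the supports are forced to coincide. The main obstacle is that there are now four columns instead of three, so pigeonhole needs total weight at least $9$, and this may fail. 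If it does, we also bring in the remaining cubic relations of $\B(\Tau,q)$: the kernel of $\mathcal{S}_3$ on Hurwitz orbits of $\Tau^3$, computed in Magma. Their trilinearizations should rule out the leftover configurations, which are expected to be a finite list of sparse supports that can be checked by hand.
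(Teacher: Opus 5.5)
Your setup and the $\Gamma_2$ part are fine and agree with the paper. All four off-diagonal Hurwitz orbits have cocycle product $-\zeta^3=-1$: the three orbits containing $(243)$ have exactly one entry $-\zeta$, and the one avoiding it has three. So there are exactly four three-term quadratic relations, together with $x^3=0$. Also, $\lambda=\mu=e_x$ is a $K$-rational point of $\Gamma_2$, because no quadratic relation contains a square.

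\textbf{The gap is in $\Gamma_3=\emptyset$, which is the real content of the statement.} You describe a case analysis on $\wt(\mu)$, but you never establish the weight bounds that drive it. You also leave open whether the pigeonhole argument suffices, and fall back on unspecified Magma cubic relations that ``should'' rule out leftover configurations. That is a plan, not a proof.

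What is missing is the structure of the $4\times 4$ matrix $M(\mu)$ with $M(\mu)\vec\lambda=0$ encoding the four bilinearized relations, and likewise $N(\nu)\vec\mu=0$. Column $i$ of $M$ contains exactly the $\mu_j$, $j\neq i$, with nonzero coefficients in three distinct rows. Symmetrically, $\mu_i$ occurs in three distinct rows of the other three columns. From this you get three bounds.
\begin{itemize}
\item If $\mu=e_i$, then $\mu_iC_i(N)=0$ forces $\nu=e_i$. The columns $C_j(M)$, $j\neq i$, become independent while $C_i(M)=0$, which forces $\lambda=e_i$. This contradicts $\lambda_i\mu_i\nu_i=0$.
\item If $\wt(\mu)=2$, then $M(\mu)$ is invertible for each of the six possible supports (a direct check), so $\lambda=0$. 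Hence $\wt(\mu)\geq 3$. The analogous arguments with $N(\nu)$ give $\wt(\nu)\geq 3$: $\nu=e_i$ would force $\mu=e_i$, and $\wt(\nu)=2$ would force $\mu=0$.
\item If $\wt(\lambda)=1$, then $C_i(M)=0$ forces $\wt(\mu)=1$. If $\wt(\lambda)=2$ with support $\{i,j\}$, look at the row where $C_i(M)$ vanishes and the row where $C_j(M)$ vanishes. These force the two complementary coordinates $\mu_k,\mu_l$ to be zero. Hence $\wt(\lambda)\geq 3$.
\end{itemize}

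Then at least $9$ of the $12$ table entries are nonzero, leaving at most $3$ zeros for four columns that each need one. So the pigeonhole argument closes exactly at your threshold $9$ using only $x^3=0$, and no further cubic relations are needed. But these three weight bounds are the proof, and your proposal neither states nor proves them.
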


\begin{proof}
Label the generators of $\B(\Tau,q)$ as $a_0,a_1,a_\omega,a_{\omega^2}$, in accordance with the realization of $\Tau$ as $\Aff(\mathbb{F}_4,\omega)$. We computed, using Magma~\cite{zbMATH01077111}, that the quadratic defining relations of  $\B(\Tau,q)$ are 
\begin{align}
& -\zeta^2 a_0 a_1 -\zeta a_1 a_\omega + a_\omega a_0, \label{4.8} \\
& -\zeta^2 a_0 a_\omega -\zeta a_\omega a_{\omega^2} + a_{\omega^2} a_0, \label{4.9} \\ 
& \zeta a_0 a_{\omega^2} -\zeta^2 a_1 a_0 + a_{\omega^2} a_1, \label{4.10} \\
& \zeta a_1 a_{\omega^2} + \zeta^2 a_\omega a_1 + a_{\omega^2} a_\omega, \label{4.11}
\end{align}
and all generators are cubed zero (degree three relations).

View $\Gamma_2(\B(\Tau,q))\subseteq \mathbb{P}^3 \times \mathbb{P}^3$ with coordinates $$([\lambda_0\colon\lambda_1\colon \lambda_{\omega}\colon \lambda_{\omega^2}],[\mu_0\colon\mu_1\colon \mu_{\omega}\colon \mu_{\omega^2}]),$$ defined by the system of equations:
$$
\underbrace{\left(\begin{matrix}
-\zeta^2\mu_1 & \mu_{\omega^2} & 0 & \zeta \mu_0 \\ 0 & \zeta^2 \mu_{\omega} & \mu_{\omega^2} & \zeta \mu_1 \\ -\mu_{\omega} & \zeta^2 \mu_0 & \zeta \mu_1 & 0 \\ -\mu_{\omega^2} & 0 & \zeta^2 \mu_0 & \zeta \mu_{\omega}
\end{matrix}\right)}_{=: M} \left(\begin{matrix}
    \lambda_0 \\ \lambda_1 \\ \lambda_{\omega} \\ \lambda_{\omega^2}
\end{matrix}\right)=\vec{0}$$
(following from relations \eqref{4.10},\eqref{4.11},\eqref{4.8},\eqref{4.9}, respectively.)
Denote by $\pr_1,\pr_2$ the natural projections $\Gamma_2(\B(\Tau,q)) \subseteq \mathbb{P}^3 \times \mathbb{P}^3 \rightarrow \PP^3$. Observe that $$\pr_2(\Gamma_2(\B(\Tau,q))) = \{[\mu_0\colon\mu_1\colon \mu_{\omega}\colon \mu_{\omega^2}]\in \PP^3\ |\ \det(M)=0\}=:\mathcal{Z} \neq \emptyset$$
so we have a surjective map $\pr_2\colon \Gamma_2(\B(\Tau,q)) \twoheadrightarrow \mathcal{Z}$.

Let $\Gamma_3(\B(\Tau,q))$ have coordinates
$$([\lambda_0\colon\lambda_1\colon \lambda_{\omega}\colon \lambda_{\omega^2}],[\mu_0\colon\mu_1\colon \mu_{\omega}\colon \mu_{\omega^2}],[\nu_0\colon\nu_1\colon \nu_{\omega}\colon \nu_{\omega^2}]).$$
As before, we have the following matrix equations:
\begin{align*}   
\underbrace{\left(\begin{matrix}
-\zeta^2\mu_1 & \mu_{\omega^2} & 0 & \zeta \mu_0 \\ 0 & \zeta^2 \mu_{\omega} & \mu_{\omega^2} & \zeta \mu_1 \\ -\mu_{\omega} & \zeta^2 \mu_0 & \zeta \mu_1 & 0 \\ -\mu_{\omega^2} & 0 & \zeta^2 \mu_0 & \zeta \mu_{\omega}
\end{matrix}\right)}_{=:M} \underbrace{\left(\begin{matrix}
    \lambda_0 \\ \lambda_1 \\ \lambda_{\omega} \\ \lambda_{\omega^2}
\end{matrix}\right)}_{=:\vec{\lambda}} &= \underbrace{\left(\begin{matrix}
-\zeta^2\nu_1 & \nu_{\omega^2} & 0 & \zeta \nu_0 \\ 0 & \zeta^2 \nu_{\omega} & \nu_{\omega^2} & \zeta \nu_1 \\ -\nu_{\omega} & \zeta^2 \nu_0 & \zeta \nu_1 & 0 \\ -\nu_{\omega^2} & 0 & \zeta^2 \nu_0 & \zeta \nu_{\omega}
\end{matrix}\right)}_{=:N}
\underbrace{\left(\begin{matrix}
    \mu_0 \\ \mu_1 \\ \mu_{\omega} \\ \mu_{\omega^2}
\end{matrix}\right)}_{=:\vec{\mu}} \\ & = \vec{0}.
\end{align*}
We denote $\lambda=[\lambda_0\colon\lambda_1\colon \lambda_{\omega}\colon \lambda_{\omega^2}]$ etc.; recall that $\wt$ denotes the number of non-zero coordinates in a given projective point. For the rest of the proof, it will be convenient to switch indexing and enumerate $\lambda_2 = \lambda_\omega$ and $\lambda_3 = \lambda_{\omega^2}$ (likewise for $\mu,\nu$). We denote the $i$-th column of a matrix $A$ by $C_i(A)$ and index its columns $0,1,\dots$.

Suppose that $\wt(\mu) = 1$ with $\mu_i \neq 0$ ($0\leq i\leq 3$). It follows that $\vec{0} = N \vec{\mu} = \mu_i C_i(N)$ and so $C_i(N) = \vec{0}$. Observe that no matter what $i$ is, the column $C_i(N)$ contains entries that are equal (up to non-zero coefficients) to $\nu_j$ for all $j \neq i$; thus $\nu_j = 0$ for all $j \neq i$ and $\nu_i$ must be non-zero. 
Now, since $C_i(M)$ does not involve $\mu_i$, $C_i(M) = \vec{0}$. Notice that $\mu_i$ appears in a different row in each one of the columns $C_j(M),\ j\neq i$, and since the entries involving $\mu_i$ are the only non-zero ones in $M$, it follows that $C_j(M),\ j\neq i$ are linearly independent. Since $M\vec{\lambda}=\vec{0}$, we have that $\sum_{j\neq i} \lambda_j C_j(M)=\vec{0}$ and it follows that $\lambda_j = 0$ for all $j \neq i$, and therefore necessarily $\lambda_i \neq 0$. We proved that $\lambda_i,\mu_i,\nu_i$ are all non-zero. But since all generators of $\B(\Tau,q)$ are cubed zero, we have $\lambda_i\mu_i\nu_i=0$ for all $0\leq i\leq 3$, a contradiction. It follows that $\wt(\mu)\geq 2$.

We further claim that $\wt(\mu)\geq 3$; suppose that $\wt(\mu) = 2$. Letting $*$ denote an arbitrary non-zero scalar, below are the six possible forms that $M$ can take, depending on which two coordinates of $\mu$ are non-zero:
\begin{gather*}
    \begin{pmatrix}
        * & 0 & 0 & * \\ 0 & 0 & 0 & * \\ 0 & * & * & 0 \\ 0 & 0 & * & 0
    \end{pmatrix},
    \begin{pmatrix}
        0 & 0 & 0 & * \\ 0 & * & 0 & 0 \\ * & * & 0 & 0 \\ 0 & 0 & * & *
    \end{pmatrix},
    \begin{pmatrix}
        0 & * & 0 & * \\ 0 & 0 & * & 0 \\ 0 & * & 0 & 0 \\ * & 0 & * & 0
    \end{pmatrix}, \\
    \begin{pmatrix}
        * & 0 & 0 & 0 \\ 0 & * & 0 & * \\ * & 0 & * & 0 \\ 0 & 0 & 0 & *
    \end{pmatrix},
    \begin{pmatrix}
        * & * & 0 & 0 \\ 0 & 0 & * & * \\ 0 & 0 & * & 0 \\ * & 0 & 0 & 0
    \end{pmatrix},
    \begin{pmatrix}
        0 & * & 0 & 0 \\ 0 & * & * & 0 \\ * & 0 & 0 & 0 \\ * & 0 & 0 & *
    \end{pmatrix},
\end{gather*}
The matrices are arranged in the following order of non-zero components: $(0, 1)$, $(0, 2)$, $(0, 3)$, $(1, 2)$, $(1, 3)$, $(2, 3)$. It is straightforward to verify that all of the above forms represent invertible matrices.
Since $M \vec{\lambda} = \vec{0}$, it follows that $\vec{\lambda}=\vec{0}$, which is impossible as $\lambda$ represents a point in $\mathbb{P}^3$. Thus $\wt(\mu) \neq 2$.

Now suppose that $\wt(\nu) = 1$ with $\nu_i \neq 0$ ($0\leq i\leq 3$). Since $\nu_i$ appears in all columns $C_j(N),\ j\neq i$ and furthermore it appears in different rows in each of these columns, it follows that these column vectors are linearly independent. Moreover, since $C_i(N)$ involves only $\nu_j,\ j\neq i$, which all vanish, $C_i(N)=\vec{0}$. 
Hence $\vec{0} = N\vec{\mu} = \sum_{j\neq i} \mu_j C_j(N)$, and it follows that  $\mu_j = 0$ for all $j \neq i$ and therefore necessarily $\mu_i \neq 0$. This contradicts that $\wt(\mu) \geq 2$ as established above. It follows that $\wt(\nu)\geq 2$. The argument given above for why $\wt(\mu)\neq 2$ shows mutatis mutandis that $\wt(\nu)\neq 2$ (argued for the matrix $N$ instead of $M$), so $\wt(\nu)\geq 3$ as well.

We now consider $\lambda$. First, if $\wt(\lambda)=1$, say, $\lambda_i\neq 0$ ($0\leq i\leq 3$) and $\lambda_j=0$ for all $j\neq i$, then $\vec{0} = M \vec{\lambda} = \lambda_i C_i(M)$ so $C_i(M)=\vec{0}$. Since $C_i(M)$ contains entries that are equal (up to non-zero coefficients) to $\mu_j$ for all $j \neq i$, it follows that $\mu_j = 0$ for all $j \neq i$ and $\wt(\mu)=1$, contradicting the above. Hence $\wt(\lambda)\geq 2$. 

Suppose that $\wt(\lambda) = 2$. For any two indices $0\leq i < j\leq 3$, the $4\times 2$ submatrix of $M$ consisting of $C_i(M),C_j(M)$ has one row (say, $p$) where the first entry is zero and another row (say, $q$) where the second entry is zero. Notice that the other entries in these rows are, up to non-zero coefficients from $K$,  $\mu_k, \mu_l$ where $\{i, j, k , l\}=\{0,1,2,3\}$. 
Now suppose that $\lambda_i$ and $\lambda_j$ are the non-zero coordinates of $\lambda$. Since $\vec{0}=M\vec{\lambda}=\lambda_i C_i(M)+\lambda_jC_j(M)$, considering the $p$ and $q$ entries in this linear combination, it follows that $\mu_k$ and $\mu_l$ must both be zero. Therefore $\wt(\mu)\leq 2$, contradicting the above arguments. Hence $\wt(\lambda)\geq 3$ as well.

Finally, we showed that $\wt(\lambda),\wt(\mu), \wt(\nu) \geq 3$. It follows that within the table  
\begin{center}
    \begin{tabular}{|l|l|l|l|}
    \hline
    $\lambda_0$ & $\lambda_1$ & $\lambda_2$ & $\lambda_3$ \\
    \hline
    $\mu_0$ & $\mu_1$ & $\mu_2$ & $\mu_3$ \\
    \hline
    $\nu_0$ & $\nu_1$ & $\nu_2$ & $\nu_3$ \\
    \hline
    \end{tabular}
\end{center}
there are at most $3$ zero entries. However, since $\B(\Tau,q)$ satisfies the relations $a_0^3=a_1^3=a_\omega^3=a_{\omega^2}^3=0$, it follows that $\lambda_0\mu_0\nu_0=\cdots=\lambda_{\omega^2}\mu_{\omega^2}\nu_{\omega^2}=0$, so each one of the columns in this table must have at least one zero entry, a contradiction. Therefore, $\Gamma_3(\B(\Tau,q)) = \emptyset$.
\end{proof}

We are now ready to prove Theorem \ref{thm:table}.

\begin{proof}[{Proof of Theorem \ref{thm:table}}]
We go over the racks and cocycles listed in Table \ref{tab:examples} case by case.

\medskip

\noindent \emph{The rack $(12)^{\mathbb{S}_3}$ equipped with the constant cocycle $-1$} yields a Nichols algebra isomorphic to the Fomin--Kirillov algebra $\E_3$, whose $\Gamma_2$ is empty by \cite{GMSW}. Equipped with the constant cocycle $\zeta_3$ over a field of characteristic $2$, the rack $(12)^{\mathbb{S}_3}$ gives a Nichols algebra for which $\Gamma_2\neq \emptyset$ but $\Gamma_3=\emptyset$ as shown in Proposition \ref{prop:cubic roots}.

\medskip

\noindent \emph{The rack $\mathcal{T}$} is a braided rack and thus, equipped with the constant cocycle $-1$, gives a Nichols algebra with $\Gamma_2=\emptyset$ by Corollary \ref{cor:braided}. This applies also to the case where the characteristic of the base field is $2$ and $q\equiv -1=1$. Equipped with the cocycle $\psi$ (taking values $\pm \zeta_3$), we get a (very) different Nichols algebra of dimension $5184$, which is shown to have $\Gamma_2\neq \emptyset$ but $\Gamma_3=\emptyset$ in Proposition \ref{prop:5184}.

\medskip

\noindent \emph{The racks $\Aff(5,2),\Aff(5,3)$}, as analyzed in Examples \ref{ex:5 2} and \ref{ex:5 3}, satisfy Condition (1) of Proposition \ref{prop:braided}. Hence their Nichols algebras with the constant cocycle $-1$ have $\Gamma_2 = \emptyset$ by that proposition.

\medskip

\noindent \emph{The rack $(12)^{\mathbb{S}_4}$} equipped with the cocycle $\chi$ gives the Fomin--Kirillov algebra $\E_4$ for which $\Gamma_2=\emptyset$ (see \cite{GMSW}). Since this rack is braided, it follows from Corollary \ref{cor:braided} that, when equipped with the constant cocycle $-1$, the resulting Nichols algebra has $\Gamma_2=\emptyset$. 

\medskip

\noindent \emph{The rack $(1234)^{\mathbb{S}_4}$} is braided by Proposition \ref{prop:conj S_n braided} and therefore its Nichols algebra with respect to the constant cocycle $-1$ has $\Gamma_2=\emptyset$ by Corollary \ref{cor:braided}.

\medskip

\noindent \emph{The racks $\Aff(7,3)$ and $\Aff(7,5)$} are both braided, so by Corollary \ref{cor:braided}, the Nichols algebras associated with them with respect to the constant cocycle $-1$ have $\Gamma_2=\emptyset$.

\medskip

\noindent Finally, \emph{the rack $(12)^{S_5}$} is braided; with cocycle $\chi$ it yields a Fomin--Kirillov algebra $\E_5$, whose $\Gamma_2$ is empty by \cite{GMSW}; and with constant cocycle $-1$, we obtain the same result by Corollary \ref{cor:braided}.
\end{proof}

\section{Affine racks at the cocycle -1} \label{sec:affine racks}


Let $K$ be a field of characteristic zero. 
Let $q=p^f$ and let $\alpha\in \mathbb{F}_q^\times$, $\alpha\neq 1$. Recall that $(\Aff(q,\alpha),\triangleright)$ is the rack whose underlying set is $X=\{x_g|g\in \mathbb{F}_q\}$ and $x_g\triangleright x_h=x_{(1-\alpha)g+\alpha h}$. When $q$ is prime, it is known that the second cohomology groups of such racks are trivial~\cite[Lemma 5.1]{zbMATH02100251}, so all $2$-cocycles are constant. 

\begin{lem} \label{lem:lin alg} Over a field of characteristic zero, suppose that
\begin{align}
    \underbrace{\left( \begin{matrix}
        \mu_{\pi(0)} & \mu_{\pi(1)} & \cdots & \mu_{\pi(p-1)} \\
        \mu_{\pi(p-1)} & \mu_{\pi(0)} & \cdots & \mu_{\pi(p-2)} \\
        \vdots & \vdots & \ddots & \vdots \\
        \mu_{\pi(1)} & \mu_{\pi(2)} & \cdots & \mu_{\pi(0)} \\
        \mu_{0} & 0 & \cdots & 0 \\
        0 & \mu_{1} & \cdots & 0 \\
        \vdots & \vdots & \ddots & \vdots \\
        0 & 0 & \cdots & \mu_{p-1}
    \end{matrix} \right)}_{M}
    \underbrace{\left( \begin{matrix}
        u_{0} \\ u_{1} \\ \vdots \\ u_{p-1}
    \end{matrix} \right)}_{\vec{u}} = \left( \begin{matrix}
        0 \\ 0 \\ \vdots \\ 0
    \end{matrix} \right)
\end{align}
for some permutation $\pi$ on $\mathbb{F}_p$. Then either $M=0$ or $\vec{u}=0$.
\end{lem}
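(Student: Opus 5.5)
The idea is to rewrite the system as a convolution identity in the group algebra of $\mathbb{Z}/p$, and then derive a contradiction from the support constraint imposed by the diagonal block together with the uncertainty principle for cyclic groups of prime order.

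\emph{Reformulation.} Put $w_k=\mu_{\pi(k)}$ for $k\in\mathbb{F}_p$. The top $p$ rows of $M$ form the circulant matrix of $(w_0,\dots,w_{p-1})$. Row $i$ reads
\[
\sum_{j} w_{j-i}\,u_j=0 \quad\text{for all } i\in\mathbb{F}_p .
\]
Define in $K[\mathbb{Z}/p]=K[x]/(x^p-1)$ the elements
\[
W=\sum_k w_k x^{-k},\qquad U=\sum_j u_j x^{j}.
\]
The coefficient of $x^i$ in $WU$ is $\sum_j w_{j-i}u_j$, so the circulant block says exactly $WU=0$. The bottom $p$ rows say $\mu_iu_i=0$ for every $i$. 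Hence the supports $S=\{i:\mu_i\neq 0\}$ and $T=\{j:u_j\neq0\}$ are disjoint, which gives $|S|+|T|\le p$. Since $\pi$ is a permutation, $W$ has exactly $|S|$ non-zero coefficients.

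\emph{Fourier side.} Extend scalars to $L=K(\zeta_p)$; this changes neither the hypotheses nor the conclusion. Since $\charac K=0$, the algebra $L[\mathbb{Z}/p]$ is semisimple and the characters $x\mapsto\zeta_p^m$, $m\in\mathbb{F}_p$, separate it. From $WU=0$ we get $\widehat W(m)\widehat U(m)=0$ for every $m$. Thus $\operatorname{supp}\widehat W$ and $\operatorname{supp}\widehat U$ are disjoint, and
\[
|\operatorname{supp}\widehat W|+|\operatorname{supp}\widehat U|\le p .
\]

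\emph{Uncertainty principle.} Now invoke the uncertainty principle for $\mathbb{Z}/p$ (Tao's form of Chebotarev's lemma): for a non-zero $f$ on $\mathbb{Z}/p$ with values in a field of characteristic zero containing $\zeta_p$,
\[
|\operatorname{supp} f|+|\operatorname{supp}\widehat f|\ge p+1 .
\]
Suppose both $W\neq0$ and $U\neq0$. Adding the two instances of this inequality and using the Fourier-side bound gives
\[
|S|+|T|\ge 2(p+1)-p=p+2>p,
\]
which contradicts the support bound from the reformulation. Hence $W=0$, which means $M=0$, or $U=0$, which means $\vec u=0$.

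\emph{Main obstacle.} The key step is justifying the uncertainty principle over an arbitrary characteristic-zero field rather than over $\mathbb{C}$. Its proof rests on Chebotarev's lemma: every square minor of the DFT matrix $(\zeta_p^{ij})$ is non-zero. These minors are elements of $\mathbb{Z}[\zeta_p]$, and Chebotarev's lemma says they are non-zero as algebraic numbers. Since $\mathbb{Q}(\zeta_p)$ embeds into $L$ when $\charac K=0$, they remain non-zero in $L$, and Tao's counting argument then goes through verbatim. In characteristic $p$ the lemma genuinely fails, which is why the hypothesis is needed. I would cite Chebotarev/Tao for this step rather than reprove it.
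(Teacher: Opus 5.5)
Your proof is correct and is essentially the paper's argument. The paper also diagonalizes the circulant block by the vectors $(\zeta_p^{kj})_j$ and observes that $\vec u$ lies in the span of the eigenvectors whose eigenvalues $\nu_k=\sum_i\mu_{\pi(i)}\zeta_p^{ki}$ vanish; this is your disjointness of $\operatorname{supp}\widehat W$ and $\operatorname{supp}\widehat U$. It then applies Tao's uncertainty principle twice, once to $\vec u$ and once to $g(X)=\sum_i\mu_{\pi(i)}X^i$, together with the disjointness of the supports of $\mu$ and $\vec u$ forced by the diagonal block; your $WU=0$ packaging simply runs these two applications symmetrically and sums them.

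One correction to your closing aside: in characteristic $p$ it is only Chebotarev's lemma (the Fourier route) that breaks down, not the statement itself. There $x^p-1=(x-1)^p$, and a non-zero polynomial with $s$ terms of distinct degrees $<p$ vanishes to order at most $s-1$ at $x=1$. Hence $WU\neq 0$ whenever $W,U\neq 0$ and $|S|+|T|\le p$.
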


\begin{proof}
By extending scalars, we may assume we are working over $\mathbb{C}$. Notice that the first $p$ rows of $M$ form a circulant $p\times p$ matrix $N$, and therefore its eigenvalues and eigenvectors are: 
\begin{align*}
\left\{\nu_k := \sum_{i=0}^{p-1} \mu_{\pi(i)}\zeta_p^{ki}\right\}_{k=0,\dots,p-1},\ \ \ \left\{v_k := \left( \begin{matrix}
        1 & \zeta_p^k & \cdots & \zeta_p^{(p-1)k}
    \end{matrix} \right)^{T}\right\}_{k=0,\dots,p-1}
\end{align*}
where $\zeta = \zeta_p$ is a primitive $p$-th root of unity. 
Notice that $v_0,\dots,v_{p-1}$ are linearly independent (they form a Vandermonde matrix). 
We index the entries of all vectors and matrices starting from $0$. 
Indeed, for any $0\leq k\leq p-1$ and $0\leq l\leq p-1$,
\begin{align*}
    \left(N\cdot v_k\right)_l & = \sum_{r=0}^{p-1} \mu_{\pi(\underbrace{-l+r}_{=: i})} \zeta^{kr} \\
    & = \sum_{i=0}^{p-1} \mu_{\pi(i)} \zeta^{k(i+l)} = \nu_k \cdot \zeta^{kl} = \nu_k (v_k)_l.
\end{align*}
Let $T=\{t_1,\dots,t_k\}\subseteq \{0,1,\dots,p-1\}$ be the set of indices for which $\nu_{t_1}=\cdots=\nu_{t_k}=0$, equivalently, $v_{t_1},\dots,v_{t_k}\in \ker(N)$. If $T=\emptyset$ then $N$ has $p$ linearly independent eigenvectors with non-zero eigenvalues, so $N$ -- and hence $M$ -- are of rank $p$; therefore $\ker(M)=\{\vec{0}\}$ and necessarily $\vec{u}=\vec{0}$, and we are done. We henceforth assume $T\neq \emptyset$. 
Thus:
\begin{align} \label{Vandermonde}
\left( \begin{matrix}
1 & \zeta^{t_1} & \cdots & \zeta^{t_1(p-1)} \\
\vdots & \vdots & \ddots & \vdots \\
1 & \zeta^{t_k} & \cdots & \zeta^{t_k(p-1)}
\end{matrix} \right) \left( \begin{matrix}
    \mu_{\pi(0)} \\ \vdots \\ \mu_{\pi(p-1)}
\end{matrix} \right) = \left( \begin{matrix}
    \nu_{t_1} \\ \vdots \\ \nu_{t_k}
\end{matrix} \right) = \vec{0}.
\end{align}
Furthermore, since $\vec{u}\in \ker(N)$, there is a linear combination $\vec{u} = \alpha_1 v_{t_1}+\cdots+\alpha_kv_{t_k}$. Let $S=\{s_1,\dots,s_l\}\subseteq \{0,1,\dots,p-1\}$ be the set of indices for which $u_{s_i}=0$. Since the $s$-th entries of $v_{t_1},\dots,v_{t_k}$ are, respectively, $(\zeta^s)^{t_1},\dots,(\zeta^s)^{t_k}$, we have that $\alpha_1 (\zeta^{s_i})^{t_1}+\cdots+\alpha_k (\zeta^{s_i})^{t_k}=0$ for all $1\leq i\leq l$. Hence we have $l$ distinct $p$-th roots of unity as zeros of the polynomial $f(X) = \alpha_1 X^{t_1}+\cdots+\alpha_k X^{t_k}$, whose degree is at most $p-1$. By \cite[Observation on pages 125--126]{Tao}, it follows that $f(X)$ is either the zero polynomial (in which case $\alpha_1=\cdots=\alpha_k=0$ and hence $\vec{u}=0$, and we are done), or else, $f(X)$ has at least $l+1$ non-zero monomials; therefore $k\geq l+1$, equivalently, $l\leq k-1$.

Notice that for all $r\notin S$, we have $u_r\neq 0$ and, considering the $(p+r)$-th row of $M$, we obtain that $\mu_r=0$. By the above assertion, there are at least $p-l\geq p-k+1$ such $\mu_r$'s. Hence the number of non-zero monomials in the polynomial
\[
g(X) = \mu_{\pi(0)}+\mu_{\pi(1)}X+\cdots+\mu_{\pi(p-1)}X^{p-1}
\]
is at most $k-1$.
Using \eqref{Vandermonde}, we obtain that the polynomial $g(X)$ 
has $k$ distinct $p$-th roots of unity as zeros, namely, $\zeta^{t_1},\dots,\zeta^{t_k}$. Again by \cite{Tao}, it follows that $g(X)=0$ so $\mu_{\pi(0)}=\cdots=\mu_{\pi(p-1)}=0$ and hence $M=0$, as required. 
\end{proof}

\begin{proof}[{Proof of Theorem \ref{thm:affine}}]
Fix $\alpha\in \mathbb{F}_p^\times$. Let $X=\Aff(\mathbb{F}_p,\alpha)$ and let $\B=\B(X,q)$ be the Nichols algebra associated with $X$ with respect to the constant $2$-cocycle $q\equiv -1$.
The action of $B_2=\left<\sigma\right>\curvearrowright X^2
$  is given by the matrix
$$
c = \left( \begin{matrix}
1-\alpha & \alpha \\ 1 & 0
\end{matrix} \right).
$$
For each $d\in \mathbb{F}_p$, consider the subset $$C_{d} = \{(-\alpha g + d, g)\ |\ g\in \mathbb{F}_p\}\subseteq X^2.$$
We claim that $C_{d}$ is $B_2$-invariant. Indeed,
$$\left( \begin{matrix}
1-\alpha & \alpha \\ 1 & 0
\end{matrix} \right) \left( \begin{matrix}
    -\alpha g + d \\ g
\end{matrix} \right) = \left( \begin{matrix}
    \alpha^2 g + (1-\alpha) d \\ -\alpha g + d
\end{matrix} \right) = \left( \begin{matrix}
    -\alpha(-\alpha g + d) + d \\ -\alpha g + d
\end{matrix} \right).
$$
It follows that $C_{d}$ is a disjoint union of Hurwitz orbits. Since the cocycle is $q_{xy}\equiv -1$, we have by Lemma \ref{lem:S2} that the sum over every orbit in $X^2$ gives a quadratic relation in $\B$. Adding up the relations corresponding to the orbits participating in $C_d$, we obtain the following relation in $\B$:
\begin{align} \label{quadratic relations}
\sum_{g \in \mathbb{F}_p} x_{-\alpha g + d}x_{g} = 0.
\end{align}
Suppose that $M=Ke_0\oplus Ke_1\oplus Ke_2$ is a graded $\B$-module and let $$x_g\cdot e_i=\lambda_{g,i}\cdot e_{i+1},\ \ i=0,1.$$ By \eqref{quadratic relations},
\begin{align} \label{relations lambdas}
\sum_{g\in \mathbb{F}_p} \lambda_{-\alpha g+d,1}\lambda_{g,0}=0.
\end{align} Organizing this system of quadratic equations in a matrix form, we obtain:

\begin{align} \label{matrix}
\underbrace{\left(
\begin{matrix}
     \lambda_{0,1} & \lambda_{-\alpha,1} & \cdots & \lambda_{-\alpha(p-1),1} \\
     \lambda_{1,1} & \lambda_{-\alpha+1,1} & \cdots & \lambda_{-\alpha(p-1)+1,1} \\ \vdots & \vdots & \ddots & \vdots \\
     \lambda_{p-1,1} & \lambda_{-\alpha+(p-1),1} & \cdots & \lambda_{-\alpha(p-1)+p-1,1}
\end{matrix}
\right)}_{=:\Lambda} \left(\begin{matrix}
    \lambda_{0,0} \\ \vdots \\ \lambda_{p-1,0}
\end{matrix}\right) = \vec{0}
\end{align}
where $\Lambda_{i,j}=\lambda_{-\alpha j+i,1}$, indexing the rows and columns of $\Lambda$ by $0,1,\dots,p-1$.
Notice that
$$\Lambda_{i,j-1}=\lambda_{-\alpha (j-1)+i,1}=\lambda_{-\alpha j + i + \alpha,1} = \Lambda_{i + \alpha,j}$$
so $R_{\alpha e}$ is the $e$-th cyclic shift to the right of the first row. Re-ordering the rows of $\Lambda$ (namely, $R_0,R_{\alpha},\dots,R_{(p-1)\alpha}$), we obtain a circulant matrix. Furthermore, for each $g\in \mathbb{F}_p$ we have that $x_g^2=0$ holds in $\B$: indeed, $c_1(x_g\otimes x_g)=-x_g\otimes x_g$, so $x_g\otimes x_g$ is in the kernel of $I+c_1$; therefore, $\lambda_{g,1}\lambda_{g,0}=0$. Appending these equations to \eqref{matrix}:
\begin{align}
    \left( \begin{matrix}
        \lambda_{0,1} & \lambda_{-\alpha,1} & \cdots & \lambda_{-\alpha(p-1),1} \\
        \lambda_{-\alpha(p-1),1} & \lambda_{0,1} & \cdots & \lambda_{-\alpha(p-2),1} \\
        \vdots & \vdots & \ddots & \vdots \\
        \lambda_{-\alpha,1} & \lambda_{-2\alpha,1} & \cdots & \lambda_{0,1}  \\ \lambda_{0,1} & 0 & \cdots & 0 \\
        0 & \lambda_{1,1} & \cdots & 0 \\
        \vdots & \vdots & \ddots & \vdots \\
        0 & 0 & \cdots & \lambda_{p-1,1}
    \end{matrix} \right)
    \left( \begin{matrix}
        \lambda_{0,0} \\ \lambda_{1,0} \\ \vdots \\ \lambda_{p-1,0}
    \end{matrix} \right) = \left( \begin{matrix}
        0 \\ 0 \\ \vdots \\ 0
    \end{matrix} \right).
\end{align}
This fits into the framework of Lemma \ref{lem:lin alg} with $\mu_i=\lambda_{i,1}$ and $\pi(i)=-\alpha i \mod p$; therefore, by Lemma \ref{lem:lin alg}, either $\lambda_{g,0}=0$ for all $g\in \mathbb{F}_p$, or $\lambda_{g,1}=0$ for all $g\in \mathbb{F}_p$. In either case, $M$ cannot be generated by its degree-$0$ part, a contradiction. It follows that $\Gamma_2(\B)=\emptyset$.
\end{proof}


\section{Higher roots of unity} \label{sec:higher roots}

\begin{prop}
Suppose $X$ is a rack and $q_{xy}\equiv q$ is a constant $2$-cocycle over a base field $K$. Then $p(\B(X,q))\geq \ord_{K^\times}(q)-1$.
\end{prop}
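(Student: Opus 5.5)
Set $N=\ord_{K^\times}(q)$; if $N=\infty$, read the claim as $p(\B(X,q))=\infty$. The plan is to exhibit an explicit $(N-1)$-truncated point module, or an honest point module when $N=\infty$. The natural candidate is the module on which one fixed generator $x\in X$ acts nontrivially and every other generator acts by zero. We take $M=Ke_0\oplus\cdots\oplus Ke_{N-1}$ with $x\cdot e_j=e_{j+1}$ for $j<N-1$, $x\cdot e_{N-1}=0$, and $y\cdot e_j=0$ for all $y\neq x$. This $M$ is graded, cyclic, generated by $e_0$, and has Hilbert series $1+t+\cdots+t^{N-1}$. By the parametrization recalled at the start of this section (from \cite{ATV}), it is a module over $\B(X,q)$ exactly when every homogeneous relation of degree at most $N-1$ is killed. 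Equivalently, $M$ is the quotient of the subalgebra $K[x]\subseteq\B(X,q)$, so we need $x^n\neq 0$ for $n\le N-1$ and that relations do not force nonzero coefficients on $x^n$.

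The key step is to show that for each $n\le N-1$ every element of $\ker\mathcal{S}_n\subseteq V^{\otimes n}$ has zero coefficient on the pure tensor $x^{\otimes n}$. Acting on $e_0$, a relation $f=\sum c_I x_{i_1}\cdots x_{i_n}$ gives $c_{(x,\dots,x)}e_n$, since every other word contains some $y\neq x$ and annihilates $e_0$. So it suffices to prove this coefficient vanishing.

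For this we use the grading by $B_n$-orbits. The braiding maps span$(O)$ to itself for each Hurwitz orbit $O\subseteq X^n$, so $\mathcal{S}_n$ preserves the decomposition $V^{\otimes n}=\bigoplus_O \Span(O)$ and $\ker\mathcal{S}_n$ decomposes accordingly. Since $X$ is a quandle (conjugacy type), $\sigma_i(x,\dots,x)=(x\triangleright x,x,\dots)=(x,\dots,x)$, so $\{(x,\dots,x)\}$ is a singleton orbit. On it each $c_i$ acts by the scalar $q$, and $\mathcal{S}_n$ acts by the quantum factorial
\[
(n)_q!=\prod_{k=1}^{n}(1+q+\cdots+q^{k-1}).
\]
This is the standard computation for a one-dimensional braided space, using the recursive formula $\mathcal{S}_{k+1}=(I\otimes\mathcal{S}_k)(I+c_1+\cdots+c_1\cdots c_k)$. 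Because $q$ has order $N$, for $2\le k\le N-1$ we get $1+q+\cdots+q^{k-1}=(q^k-1)/(q-1)\neq0$; for $q=1$ (so $N=1$) the statement is vacuous. Hence $(n)_q!\neq 0$ for $n\le N-1$, so $x^{\otimes n}\notin\ker\mathcal{S}_n$, and the projection of any kernel element onto the $x^{\otimes n}$ component is zero. When $N=\infty$ the same argument applies for every $n$, giving a genuine point module.

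The main obstacle is the orbit-decomposition step: we must be sure that the kernel splits along Hurwitz orbits. This holds because the diagonal cocycle twist keeps each $c_i$ mapping the span of an orbit into itself. A secondary obstacle is the quantum factorial identity, which is a routine induction. With both in place, $M$ lies in $\Gamma_{N-1}(\B(X,q))$, and therefore $p(\B(X,q))\ge N-1$.
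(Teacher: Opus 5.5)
Your proof is correct, but it takes a different route from the paper.

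\textbf{The paper's route.} It uses the functional $f_n\colon V^{\otimes n}\to K$ that sends every pure tensor to $1$. It proves $f_n\circ\mathcal{S}_n=q(n)f_n$ by induction through the recursive formula for $\mathcal{S}_{n+1}$, using an auxiliary map $p_n$ that erases the first tensor factor. Here $q(n)$ is exactly your $(n)_q!$. This yields a surjection $\B(X,q)\twoheadrightarrow K[t]/(t^{N})$ sending \emph{every} generator to $t$.

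\textbf{Your route.} You use the coordinate functional of $x^{\otimes n}$. It satisfies the same eigen-identity because $\mathcal{S}_n$ preserves the Hurwitz-orbit decomposition and $\{(x,\dots,x)\}$ is a singleton orbit. On that orbit everything reduces to the classical quantum factorial of a one-dimensional braided space. Your module corresponds to the map $x\mapsto t$ and $y\mapsto 0$ for $y\neq x$.

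\textbf{What each approach buys.}
\begin{itemize}
\item The paper needs $q$ to be constant on all of $X\times X$, so that each $c_i$ rescales every pure tensor by the same scalar. It never uses $x\triangleright x=x$, so it applies to arbitrary racks.
\item You need the quandle property, which is available by the paper's standing convention that racks are of conjugacy type. However, you only use the single value $q_{xx}$. So your argument actually proves $p(\B(X,q))\geq \ord_{K^\times}(q_{xx})-1$ for an arbitrary $2$-cocycle and any $x\in X$.
\item You also obtain $|X|$ distinct truncated point modules, one per generator, rather than the single ``diagonal'' one.
\end{itemize}

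\textbf{A wording issue.} $M$ is not ``a quotient of the subalgebra $K[x]$.'' Also, $x^n\neq 0$ in $\B(X,q)$ would not by itself suffice. The right criterion is the vanishing of the $x^{\otimes n}$-coefficient of every element of $\ker\mathcal{S}_n$, which is what you then prove. That criterion kills the action on every $e_j$, not only on $e_0$, since any word containing some $y\neq x$ annihilates every basis vector.
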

\begin{proof}
It suffices to prove the claim in case that $q\neq 1$ is a root of unity.
Let $V=\Span_K X$ and consider the tensor algebra $T(V)=\bigoplus_{n\geq 0} V^{\otimes n}$ (where $V^{\otimes 0} := K$). 
Notice that, since the cocycle is constant, for each pure tensor $u\in X^{\otimes n}$ and $1\leq i \leq n-1$ we have $c_i(u)=qu'$ where $u'$ is another pure tensor. 

Let $f_n\colon V^{\otimes n}\rightarrow K$ be the linear functional mapping all tensors to $1$ and let $I_n=\ker(f_n)$; observe that $T(V)/\bigoplus_{n\geq 1} I_n\cong K[t]$. Let $p_n\colon V^{\otimes n+1}\rightarrow V^{\otimes n}$ be the linear map erasing the first component of any $(n+1)$-fold tensor. Notice that 
\begin{equation} \label{eq:otimes}
f_{n+1}\circ (I\otimes T)=f_n\circ T \circ p_n
\end{equation}
for any $T\colon V^{\otimes n} \rightarrow V^{\otimes n}$. Indeed, if $u=u_1\otimes\cdots\otimes u_{n+1}$ is a pure $(n+1)$-tensor and $T(u_2\otimes\cdots\otimes u_{n+1})=\sum \alpha_i v_i$ where $v_i$ are pure $n$-tensors, then $f_n(T(p_n(u)))=\sum_i \alpha_i$, and also $f_{n+1}\left((I\otimes T)(u)\right)=f_{n+1}(\sum_i \alpha_i u_1\otimes v_i)=\sum_i \alpha_i$.


We claim that 
\begin{equation} \label{eq:fnsn} f_n \circ \mathcal{S}_n=\underbrace{\prod_{i=1}^{n-1}(1+q+\cdots+q^i)}_{=:q(n)} f_n. 
\end{equation}
This can be seen by induction on $n$. It suffices to verify this for pure tensors. Indeed, if $u=u_1\otimes\cdots\otimes u_{n+1}$ is a pure $(n+1)$-fold tensor then
\begin{eqnarray*}
\mathcal{S}_{n+1}(u) & = & (I \otimes \mathcal{S}_n)(I+c_1+c_1c_2+\dots+c_1\cdots c_n)(u) \\
& = & (I\otimes \mathcal{S}_n)(u^{(0)}+qu^{(1)}+\dots+q^nu^{(n)}) \\ & = & \sum_{i=0}^{n} q^i (I\otimes \mathcal{S}_n)(u^{(i)})
\end{eqnarray*}
where $u^{(i)}$ are all pure $(n+1)$-tensors. 
Hence by \eqref{eq:otimes} $$\left(f_{n+1} \circ \mathcal{S}_{n+1}\right)(u)=\sum_{i=0}^{n} q^i f_{n+1}\left(\left(I\otimes \mathcal{S}_n\right)(u^{(i)})\right)=\sum_{i=0}^n q^i \left(f_n \circ \mathcal{S}_n\right)(p_n(u^{(i)}))$$
and by the induction hypothesis, this is equal to
$$\sum_{i=0}^n q^i \prod_{j=1}^{n-1} (1+q+\cdots+q^j) = (1+q+\cdots+q^n) \cdot \prod_{j=1}^{n-1} (1+q+\cdots+q^j) = \prod_{j=1}^{n} (1+q+\cdots+q^j),$$
as required.

Suppose that $n$ is such that $q(n)\neq 0$. By \eqref{eq:fnsn}, for every $\xi\in \ker \mathcal{S}_n$, we have that $q(n)f_n(\xi) = f_n\left(\mathcal{S}_n(\xi)\right) = 0$ and thus $f_n(\xi)=0$. It follows that for all $n$ for which $q(n)\neq 0$, we have $\ker\mathcal{S}_n \subseteq \ker f_n = I_n$.

Notice that for all $n\leq \ord_{K^\times}(q) - 1$, we have $$q(n)=\frac{\prod_{i=1}^{n-1} (q^{i+1} - 1)}{(q-1)^{n-1}} \neq 0$$
and therefore, $\ker \mathcal{S}_n \subseteq I_n$ for all $n\leq \ord_{K^\times}(q) - 1$, so there is a surjection
$\B(X,q)\twoheadrightarrow K[t]/(t^{\ord_{K^\times}(q)})$ given by mapping each $d$-fold pure tensor to $t^d$. Subsequently, $\B(X,q)$ admits a truncated point module of degree $\ord_{K^\times}(q)-1$, by inflation.
\end{proof}

We now focus on the simplest indecomposable case: the rack $X=\Aff(3,-1)=(12)^{\mathbb{S}_3}$. We saw in Theorem \ref{thm:table} that, equipped with the cocycle $q\equiv -1$, the resulting Nichols algebra has $\Gamma_2=\emptyset$, and with $q\equiv \zeta_3$ a primitive cubic root of unity over a field of characteristic $2$, then $\Gamma_2\neq \emptyset$ but $\Gamma_3=\emptyset$. We now demonstrate how the situation becomes much more complicated when $q\equiv \zeta_3$ for a field of characteristic zero.

The following relations of $\B=\B(X,q)$ have been computed using Magma~\cite{zbMATH01077111}. We fix a presentation of $\B$ with generators $a,b,c$.

\medskip

\noindent \emph{Quadratic relations:} None.

\medskip

\noindent \emph{Cubic relations:}
\begin{gather*}
  a^3,\quad 
  b^3,\quad 
  c^3,\quad a^2c + aba + acb + bab + b^2c + bca + ca^2 + cb^2,\\
  ab^2 + ac^2 + bac + b^2a + bcb + cab + cbc + c^2a, \\
 a^2b + abc + aca + ba^2 + bc^2 + cac + cba + c^2b
\end{gather*}

\medskip

\noindent \emph{Quartic relation:}
\begin{multline*}
   (3\zeta_3+4\zeta_3^2)a^2b^2 + 2\zeta_3 a^2c^2 + (3\zeta_3+\zeta_3^2)abac +
2\zeta_3 ab^2a - 3abcb\\
+ (2\zeta_3+3\zeta_3^2)acab
+ (2\zeta_3+\zeta_3^2)acbc 
-2\zeta_3 ba^2b + babc + 3 baca \\
+ (-3\zeta_3-4\zeta_3^2)b^2a^2 -\zeta_3^2 bcac +
(-2\zeta_3-3\zeta_3^2) bcba 
\\
+ 2\zeta_3 ca^2c -\zeta_3^2 caba - cacb + \zeta_3^2 cbab + cbca.
\end{multline*}
 
Notice that the non-monomial cubic relations take the form: \begin{align*} x_1 \cdot \sum \{\text{Hurwitz orbit}\} & + x_2 \cdot \sum \{\text{Hurwitz orbit}\} \\ & + x_3 \sum \{\text{squares, i.e. singleton Hurwitz orbits}\} \end{align*}
for $\{x_1,x_2,x_3\}=\{a,b,c\}$.
 
 \begin{proof}[{Proof of Theorem \ref{prop:zeta3}}]
We have $\Gamma_2=\PP^2 \times \PP^2$ since $\B$ has no quadratic defining relations. We turn to compute $\Gamma_3\left(\B\right) \subseteq \mathbb{P}^2\times \mathbb{P}^2\times \mathbb{P}^2$. 
We represent points in $\Gamma_3\left(\B\right)$ as arrays
\begin{gather}
    \begin{tabular}{|l|l|l|}
    \hline
    $a_0$ & $b_0$ & $c_0$ \\ \hline 
    $a_1$ & $b_1$ & $c_1$ \\ \hline
    $a_2$ & $b_2$ & $c_2$ \\ \hline
    \end{tabular}
\end{gather}
with each row projectively defined. As usual, we let the weight of a point in a projective space denote the number of non-zero coordinates in it. The cubic relations 
\[
a^3=b^3=c^3=0
\]
imply $a_2a_1a_0=b_2b_1b_0=c_2c_1c_0=0$, namely, each column of the array contains at least one zero coordinate.

\bigskip

\noindent \emph{Case I: $\wt([a_0\colon b_0\colon c_0])=1$.} Notice that under the permutation $a\mapsto b,b\mapsto c,c\mapsto a$, the three monomial cubic relations are cyclically permuted, and the three non-monomial cubic relations are cyclically permuted (in fact, the action $\mathbb{S}_3\curvearrowright X$ extends to an action on the cubic relations). We may thus first focus on the case that $a_0=1,b_0=c_0=0$ and then duplicate the resulting points through this action. 

\medskip

\emph{Sub-case 1: $a_1=0$.}
Then $b_1\neq 0$, for otherwise $[a_1\colon b_1\colon c_1]=[0\colon 0\colon 1]$ and the first non-monomial cubic relation implies that $b_2=0$, the second non-monomial cubic relation implies that $c_2=0$, and the third non-monomial cubic relation implies that $a_2=0$; therefore $a_2=b_2=c_2=0$, a contradiction. 

Thus assume $[a_1\colon b_1\colon c_1]=[0\colon 1\colon c_1]$. The non-monomial cubic relations become $$a_2+b_2c_1=b_2+c_2c_1=a_2c_1+c_2=0.$$ It follows that $a_2\neq 0$ (otherwise $a_2=b_2=c_2=0$), so we may assume $a_2=1$. Thus we obtain the equations $1+b_2c_1=b_2+c_2c_1=c_1+c_2=0$ whose solutions are $c_2=1,\zeta_3,\zeta_3^2$ and $c_1=-c_2,b_2=c_2^2$. Permuting the columns (recall that we made an assumption that $a_0=1$), we get the arrays
$$
\begin{tabular}{|l|l|l|}
    \hline
    $1$ & $0$ & $0$ \\ \hline 
    $0$ & $1$ & $-x$ \\ \hline
    $1$ & $x^2$ & $x$ \\ \hline
    \end{tabular} \ \ \ 
    \begin{tabular}{|l|l|l|}
    \hline
    $0$ & $1$ & $0$ \\ \hline 
    $-x$ & $0$ & $1$ \\ \hline
    $x$ & $1$ & $x^2$ \\ \hline
    \end{tabular} \ \ \ 
    \begin{tabular}{|l|l|l|}
    \hline
    $0$ & $0$ & $1$ \\ \hline 
    $1$ & $-x$ & $0$ \\ \hline
    $x^2$ & $x$ & $1$ \\ \hline
    \end{tabular}
$$
for $x=1,\zeta_3,\zeta_3^2$, all of which form valid points in $\Gamma_3(\B)$. (Notice that permuting only the first two columns in the left-most array gives points equivalent to the second array under $x\mapsto x^2$, etc.)

\medskip

\emph{Sub-case 2: $a_1\neq 0$.}
We may assume that $a_1=1$, and so $a_2=0$ (recall that each column must contain a zero coordinate). Similarly to the previous case, if in addition $b_2=0$ then $c_2=1$, and the first non-monomial cubic relation leads to a contradiction ($1=0$); thus assume $b_2=1$. Now the non-monomial cubic relations become $$c_1+c_2=b_1+c_2c_1=1+c_2b_1=0,$$ whose solutions are $c_1=1,\zeta_3,\zeta_3^2$ and $c_2=-c_1,b_1=c_1^2$. We obtain a family of points similar to the one in Sub-case 1, with the second and third rows switched
$$
\begin{tabular}{|l|l|l|}
    \hline
    $1$ & $0$ & $0$ \\ \hline 
    $1$ & $x^2$ & $x$ \\ \hline
    $0$ & $1$ & $-x$ \\ \hline
    \end{tabular} \ \ \ 
    \begin{tabular}{|l|l|l|}
    \hline
    $0$ & $1$ & $0$ \\ \hline 
    $x$ & $1$ & $x^2$ \\ \hline
    $-x$ & $0$ & $1$ \\ \hline
    \end{tabular} \ \ \ 
    \begin{tabular}{|l|l|l|}
    \hline
    $0$ & $0$ & $1$ \\ \hline 
    $x^2$ & $x$ & $1$ \\ \hline
    $1$ & $-x$ & $0$ \\ \hline
    \end{tabular}
$$
for $x=1,\zeta_3,\zeta_3^2$.

\bigskip

\noindent \emph{Case II: $\wt([a_0\colon b_0\colon c_0])=2$.}
Let us assume that $[a_0\colon b_0\colon c_0]=[1\colon b_0\colon 0]$ with $b_0\neq 0$ (and apply symmetries on the columns afterwards, as in Case I).
We now have four sub-cases to consider, corresponding to the conjunction $(a_1=0 \ \vee \ a_2=0) \wedge (b_1=0 \ \vee \ b_2=0)$.

\medskip

\emph{Sub-case 1: $a_1=b_1=0$.} We may assume that $c_1=1$ and so the non-monomial cubic relations become $$ a_2b_0+b_2 = b_2b_0+c_2 = a_2+c_2b_0 = 0. $$ It follows that $a_2\neq 0$ (for otherwise $b_2=c_2=0$ too), so we may set $a_2 = 1$, so the above equations become $$ b_0+b_2 = b_2b_0+c_2 = 1+c_2b_0 = 0 $$ 
and $b_2=-b_0,c_2=b_0^2,b_0^3=-1$, 
whose solutions are $b_0=\zeta_6,\zeta_6^3,\zeta_6^5$. Permuting the columns (recall that we made an initial assumption that $c_0=0$):

$$
\begin{tabular}{|l|l|l|}
    \hline
    $1$ & $x$ & $0$ \\ \hline 
    $0$ & $0$ & $1$ \\ \hline
    $1$ & $-x$ & $x^2$ \\ \hline
    \end{tabular} \ \ \ 
    \begin{tabular}{|l|l|l|}
    \hline
    $0$ & $1$ & $x$ \\ \hline 
    $1$ & $0$ & $0$ \\ \hline
    $x^2$ & $1$ & $-x$ \\ \hline
    \end{tabular} \ \ \ 
    \begin{tabular}{|l|l|l|}
    \hline
    $x$ & $0$ & $1$ \\ \hline 
    $0$ & $1$ & $0$ \\ \hline
    $-x$ & $x^2$ & $1$ \\ \hline
    \end{tabular}
$$
for $x=\zeta_6,\zeta_6^3,\zeta_6^5$. (Notice that permuting only the first two columns in the first array, we get a point equivalent to the first array under $x\mapsto \frac{1}{x}$; permuting the last two columns in the first array, we get a point equivalent to the third array under $x\mapsto \frac{1}{x}$, etc.)

\medskip

\emph{Sub-case 2: $a_1=b_2=0$.} We may further assume that $b_1\neq 0$ (or we are back in the previous sub-case) and thus normalize it to $b_1=1$. The non-monomial cubic relations become $$a_2+a_2c_1b_0+c_2b_0 = a_2b_0 + c_2c_1 = a_2 c_1 + c_2 + c_2 c_1 b_0 = 0.$$ If $a_2=0$ then we may assume that $c_2=1$; from the first equation above, $b_0=0$; from the second, $c_1=0$, and the third one leads to a contradiction ($1=0$). Suppose that $a_2=1$. The above equations become $$1+(c_1+c_2)b_0 = b_0 + c_2c_1 = c_1 + c_2 + c_2 c_1 b_0 = 0.$$ Putting $\alpha = c_1+c_2,\beta = c_1c_2$, we obtain $b_0=-\beta$ and $\alpha \beta = 1, \ \alpha = \beta^2$, so $\beta=1,\zeta_3,\zeta_3^2$. For each such choice of $\beta$ we obtain a quadratic equation (namely, $\lambda^2-\alpha \lambda + \beta = 0$) whose (two distinct) roots are $c_1,c_2$. For $\beta=\zeta_3^i$, the associated quadratic equation is $\lambda^2 - \zeta_3^{2i} \lambda + \zeta_3^i = 0$, whose roots are $\zeta_6,\zeta_6^{-1}$ for $i=0$; $-1,\zeta_6^{-1}$ for $i=1$; and $-1,\zeta_6$ for $i=2$. Recall that these roots are the values taken by $c_1,c_2$. Hence, we obtain $6$ arrays, and by column symmetry,
$$
\begin{tabular}{|l|l|l|}
    \hline
    $1$ & $-x$ & $0$ \\ \hline 
    $0$ & $1$ & $x^{-1}\zeta_6^{\pm 1}$ \\ \hline
    $1$ & $0$ & $x^{-1} \zeta_6^{\mp 1}$ \\ \hline
    \end{tabular} \ \ \ 
\begin{tabular}{|l|l|l|}
    \hline
    $0$ & $1$ & $-x$ \\ \hline 
    $x^{-1}\zeta_6^{\pm 1}$ & $0$ & $1$ \\ \hline
    $x^{-1}\zeta_6^{\mp 1}$ & $1$ & $0$ \\ \hline
    \end{tabular} \ \ \   
\begin{tabular}{|l|l|l|}
    \hline
    $-x$ & $0$ & $1$ \\ \hline 
    $1$ & $x^{-1}\zeta_6^{\pm 1}$ & $0$ \\ \hline
    $0$ & $x^{-1}\zeta_6^{\mp 1}$ & $1$ \\ \hline
    \end{tabular}
$$
for $x=1,\zeta_3,\zeta_3^2$.

\medskip

For the rest of Case II, we may assume that $a_1\neq 0$ and further normalize $a_1=1$. 

\medskip

\emph{Sub-case 3: $a_2=0,b_1=0$.} If $b_2=0$ then we may assume that $c_2=1$, and the first non-monomial cubic relation gives a contradiction ($1=0$). We henceforth assume that further $b_2=1$. Now the non-monomial cubic relations become
\[
b_0+c_1+c_2=c_1 b_0 + c_2 b_0 + c_2 c_1 = 1 + c_2 c_1 b_0 = 0.
\]
Let $\alpha = c_1 + c_2, \beta = c_1 c_2$, so $b_0 = -\alpha$ and $\beta = \alpha^2,\alpha\beta=1$, so $\alpha=1,\zeta_3,\zeta_3^2$ and $\beta = \alpha^{-1}$. As in the previous sub-case, given $\alpha = \zeta_3^i$, we obtain $c_1,c_2$ as the roots of $\lambda^2 - \zeta_3^i \lambda +\zeta_3^{-i}=0$. Similarly to the previous sub-case, the roots are $\zeta_6,\zeta_6^{-1}$ for $i=0$; $-1,\zeta_6$ for $i=1$; and $-1,\zeta_6^{-1}$ for $i=2$. We obtain
$$
\begin{tabular}{|l|l|l|}
    \hline
    $1$ & $-x$ & $0$ \\ \hline 
    $1$ & $0$ & $x \zeta_6^{\pm 1}$ \\ \hline
    $0$ & $1$ & $x \zeta_6^{\mp 1}$ \\ \hline
    \end{tabular} \ \ \ 
\begin{tabular}{|l|l|l|}
    \hline
    $0$ & $1$ & $-x$ \\ \hline 
    $x \zeta_6^{\pm 1}$ & $1$ & $0$ \\ \hline
    $x \zeta_6^{\mp 1}$ & $0$ & $1$ \\ \hline
    \end{tabular} \ \ \   
\begin{tabular}{|l|l|l|}
    \hline
    $-x$ & $0$ & $1$ \\ \hline 
    $0$ & $x \zeta_6^{\pm 1}$ & $1$ \\ \hline
    $1$ & $x \zeta_6^{\mp 1}$ & $0$ \\ \hline
    \end{tabular}
$$
for $x=1,\zeta_3,\zeta_3^2$.

\medskip

\emph{Sub-case 4: $a_2=0,b_2=0$.} We may assume that $c_2=1$. The non-monomial cubic relations become $$ 1+b_1 b_0 = b_0 + c_1 = b_1 + c_1 b_0=0. $$ It follows that $c_1 = -b_0,b_1 = b_0^2,b_0^3+1=0$. The solutions are $b_0=\zeta_6,\zeta_6^3,\zeta_6^5$. We obtain the following arrays:
$$
\begin{tabular}{|l|l|l|}
    \hline
    $1$ & $x$ & $0$ \\ \hline 
    $1$ & $x^2$ & $-x$ \\ \hline
    $0$ & $0$ & $1$ \\ \hline
    \end{tabular} \ \ \ 
\begin{tabular}{|l|l|l|}
    \hline
    $0$ & $1$ & $x$ \\ \hline 
    $-x$ & $1$ & $x^2$ \\ \hline
    $1$ & $0$ & $0$ \\ \hline
    \end{tabular} \ \ \   
\begin{tabular}{|l|l|l|}
    \hline
    $x$ & $0$ & $1$ \\ \hline 
    $x^2$ & $-x$ & $1$ \\ \hline
    $0$ & $1$ & $0$ \\ \hline
    \end{tabular}
$$
for $x=\zeta_6,\zeta_6^3,\zeta_6^5$.

\bigskip

\noindent \emph{Case III: $\wt([a_0\colon b_0\colon c_0])=3$.} We may assume that $a_0=1$.
Since $a_0,b_0,c_0\neq 0$, we have
$(a_1=0 \vee a_2=0)\wedge (b_1=0 \vee b_2=0) \wedge (c_1=0 \vee c_2=0)$. Hence at least one of the second and third rows has weight $1$. Suppose first that the weight of the second row is $1$; up to permuting the columns, we may first assume that $a_1=1,b_1=c_1=0$. Now $a_2=0$, since every column must contain a zero. The non-monomial cubic equations become 
$$ b_2 b_0 + c_2 = b_2 c_0 + c_2 b_0 = b_2 + c_2 c_0 = 0. $$ 
It follows that $b_2\neq 0$ (or else, $c_2=0$ too and the third row is zero), so assume $b_2=1$. Now $c_2=-b_0$ so $c_0=b_0^2$ and $b_0^3=1$, so $b_0=1,\zeta_3,\zeta_3^2$. We obtain the arrays
$$
\begin{tabular}{|l|l|l|}
    \hline
    $1$ & $x$ & $x^2$ \\ \hline 
    $1$ & $0$ & $0$ \\ \hline
    $0$ & $1$ & $-x$ \\ \hline
    \end{tabular} \ \ \ 
\begin{tabular}{|l|l|l|}
    \hline
    $x^2$ & $1$ & $x$ \\ \hline 
    $0$ & $1$ & $0$ \\ \hline
    $-x$ & $0$ & $1$ \\ \hline
    \end{tabular} \ \ \   
\begin{tabular}{|l|l|l|}
    \hline
    $x$ & $x^2$ & $1$ \\ \hline 
    $0$ & $0$ & $1$ \\ \hline
    $1$ & $-x$ & $0$ \\ \hline
    \end{tabular}
$$
for $x=1,\zeta_3,\zeta_3^2$.

We now assume that the weight of the third row is $1$. As above, we may assume that $a_2=1,b_2=0,c_2=0$ and $a_1=0$. The non-monomial cubic relations become $$ b_1 + c_1b_0=b_1b_0+c_1c_0=b_1c_0+c_1=0. $$ If $b_1=0$ then $c_1=0$, so the second row is all zeros, a contradiction. Assume $b_1 = 1$; thus $c_1=-c_0,b_0=c_0^2$ and $c_0^3 = 1$, so $c_0=1,\zeta_3,\zeta_3^2$. We obtain the arrays
$$
\begin{tabular}{|l|l|l|}
    \hline
    $1$ & $x^2$ & $x$ \\ \hline 
    $0$ & $1$ & $-x$ \\ \hline
    $1$ & $0$ & $0$ \\ \hline
    \end{tabular} \ \ \ 
\begin{tabular}{|l|l|l|}
    \hline
    $x$ & $1$ & $x^2$ \\ \hline 
    $-x$ & $0$ & $1$ \\ \hline
    $0$ & $1$ & $0$ \\ \hline
    \end{tabular} \ \ \   
\begin{tabular}{|l|l|l|}
    \hline
    $x^2$ & $x$ & $1$ \\ \hline 
    $1$ & $-x$ & $0$ \\ \hline
    $0$ & $0$ & $1$ \\ \hline
    \end{tabular}
$$
for $x=1,\zeta_3,\zeta_3^2$.

\bigskip

Altogether, we found $90$ points in $\Gamma_3(\B(X,q))$.

\bigskip

Once $\Gamma_3(\B(X,q))$ has been computed, it is easy to compute $\Gamma_4(\B(X,q))$ by brute force (and verify using a straightforward Python code): a point in $\Gamma_4(\B(X,q))$ is represented by an array whose first and last three rows represent points in $\Gamma_3(\B(X,q))$, and such that the whole array satisfies the quartic relation we have computed.

We found $36$ points, partitioned into $7$ orbits of $\mathbb{S}_3$ (through its action on the rack $X$); representatives of these orbits are brought below (notice that the second and last arrays have stabilizers of order $2$).

\[
\begin{array}{|c|c|c|}
\hline
1 & \zeta_6 & 0\\
\hline
0 & 0 & 1\\
\hline
1 & \zeta_3^2 & \zeta_3\\
\hline
1 & \zeta_6^{-1} & 0\\
\hline
\end{array}
\qquad
\begin{array}{|c|c|c|}
\hline
1 & -1 & 0\\
\hline
0 & 0 & 1\\
\hline
1 & 1 & 1\\
\hline
1 & -1 & 0\\
\hline
\end{array}
\qquad
\begin{array}{|c|c|c|}
\hline
1 & -1 & 0\\
\hline
0 & 1 & \zeta_6\\
\hline
1 & 0 & \zeta_6^{-1}\\
\hline
-1 & 1 & 0\\
\hline
\end{array}
\qquad
\begin{array}{|c|c|c|}
\hline
1 & \zeta_6^{-1} & 0\\
\hline
0 & 1 & \zeta_6^{-1}\\
\hline
1 & 0 & -1\\
\hline
\zeta_6^{-1} & 1 & 0\\
\hline
\end{array}
\]

\[
\begin{array}{|c|c|c|}
\hline
1 & \zeta_6 & 0\\
\hline
0 & 1 & -1\\
\hline
1 & 0 & \zeta_6\\
\hline
\zeta_6 & 1 & 0\\
\hline
\end{array}
\qquad
\begin{array}{|c|c|c|}
\hline
1 & \zeta_6 & 0\\
\hline
1 & \zeta_3 & \zeta_3^2\\
\hline
0 & 0 & 1\\
\hline
1 & \zeta_6^{-1} & 0\\
\hline
\end{array}
\qquad
\begin{array}{|c|c|c|}
\hline
1 & -1 & 0\\
\hline
1 & 1 & 1\\
\hline
0 & 0 & 1\\
\hline
1 & -1 & 0\\
\hline
\end{array}
\]


Finally, it is again straightforward to check which points lie in $\Gamma_5(\B(X,q))$: a $5$-row array representing a point in $\Gamma_5(\B(X,q))$ must have its first and last four rows representing points in $\Gamma_4(\B(X,q))$. This can be done by brute force, leading to the finding that $\Gamma_5(\B(X,q))=\emptyset$.
\end{proof}

This leads to the following problem, specifying Problem \ref{prob:main} to the simplest possible underlying rack:

\begin{prob}
Let $X=\Aff(3,-1)$, the unique indecomposable rack of cardinality $3$. Consider $q\equiv \zeta_m$. Describe $\Gamma_d(\B(X,q))$.
\end{prob}

\section{Thick points over \texorpdfstring{$\E_3$}{E3}} 
\label{sec:thick}

As demonstrated in the current paper, (truncated) point modules over Nichols algebras tend to be very restricted. Subsequently, it might be hard to utilize point spaces to prove largeness properties (such as infinite-dimensionality) of Nichols algebras. Indeed, for Fomin--Kirillov algebras, there are no truncated point modules of degree greater than one \cite{GMSW}.

In this section, we propose that instead of studying (truncated) point modules of Nichols algebras, one might want to consider (truncated) \emph{thick} point modules\footnote{We are grateful to James J. Zhang for proposing to us this idea.}. These can reflect a richer portion of the geometric data of the underlying algebras, as we demonstrate here for the smallest Nichols algebra over a non-abelian group: the Fomin--Kirillov algebra $\E_3$.

A thick (truncated) point module of width $c$ and of degree $d$ is a graded module $M=M_0\oplus\cdots\oplus M_d$ generated in degree zero with Hilbert polynomial $H_M(t)=c+ct+\cdots+ct^d$. We denote the parametrizing space of such modules by $\Gamma_{c+ct+\cdots+ct^d}$ or simply by $\Gamma_{c,d}$. Thus $\Gamma_{1,d}$ is the space of truncated $d$-point modules.


Fix a presentation $$\E_3=K\left<a,b,c\right>/\left<a^2,b^2,c^2,ca-ab-bc,cb-ac+ba\right>$$ (under $a=x_{12},b=x_{13},c=x_{23}$ with the usual generators of Fomin--Kirillov algebras). We compute a homogeneous basis for $\E_3$:
\begin{align} \label{Basis}
1,a,b,c,ab,ac,ba,bc,aba,abc,bac,abac.  
\end{align}


\begin{lem} \label{lem:Possible Hilbert Polynomials}
Let $M$ be a thick point module of width $2$ and degree $2$ over $\E_3$. Then the only possible Hilbert polynomials of proper submodules of $M$ that are generated in degree zero are $1$ and $1+2t+2t^2$.
\end{lem}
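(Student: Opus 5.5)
Let $N\subsetneq M$ be a submodule generated in degree zero, so $N=\E_3\cdot N_0$ with $N_0\subseteq M_0$. Since $\dim M_0=2$, the proper case forces $\dim N_0\le 1$. If $N_0=0$ then $N=0$, so we may take $N_0=Kv$ with $v\neq 0$. Then $N=\E_3 v$ is cyclic, $N_1=\E_3{}_1 v\subseteq M_1$, and $N_2\subseteq M_2$. Hence $H_N(t)=1+d_1t+d_2t^2$ with $d_1,d_2\le 2$. The plan is to rule out all possibilities except $(d_1,d_2)=(0,0)$ and $(2,2)$.

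\emph{Step 1: if $d_1=0$ then $d_2=0$.} This holds because $N_2=\E_3{}_1 N_1$. So $H_N=1$ is one allowed case.

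\emph{Step 2: exclude $d_1=1$.} Suppose $d_1=1$. Then $N_{\leq 1}$ is a $1$-truncated point module, and $N_2=\E_3{}_1N_1$. If $d_2=1$, then $N$ would be a $2$-truncated point module over $\E_3$, contradicting $\Gamma_2(\E_3)=\emptyset$ (\cite{GMSW}, or Corollary~\ref{cor:braided}, since $(12)^{\mathbb{S}_3}$ is braided with $q\equiv-1$). If $d_2=2$, write $N_1=Kw$. Then $x\cdot v=\lambda_x w$ and $N_2=\Span\{a w,b w,c w\}$ is two-dimensional. Since the $x^2$ vanish, we get $\lambda_x\,x w=0$, so $x w=0$ whenever $\lambda_x\neq0$. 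Feeding the quadratic relations $ca-ab-bc=0$ and $cb-ac+ba=0$ into $v$ gives two linear relations among $aw,bw,cw$ with coefficients $\lambda$:
\[
\lambda_a\, cw-\lambda_b\, aw-\lambda_c\, bw=0,\qquad \lambda_b\, cw-\lambda_a\, aw+\lambda_a\, bw=0 .
\]
The key check is that in every case on the support of $\lambda$ these relations, together with $xw=0$ for $x$ in the support, cut $\Span\{aw,bw,cw\}$ down to dimension at most $1$.
\begin{itemize}
\item If $\lambda$ has a single nonzero coordinate, say $\lambda_a$, then $aw=0$ and the relations give $cw=0$ and $bw=0$. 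The other single-support cases are similar.
\item If two or three coordinates are nonzero, the vanishing $xw$ for those $x$ leaves at most one unknown, so the dimension is at most $1$.
\end{itemize}
Hence $d_2\le1$, which contradicts $d_2=2$. Thus $d_1\neq1$.

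\emph{Step 3: the case $d_1=2$.} Then $N_1=M_1$, so $N_2=\E_3{}_1N_1=\E_3{}_1M_1=M_2$, since $M$ is generated in degree zero and hence $M_2=\E_3{}_1M_1$. So $H_N=1+2t+2t^2$, and no other values survive.

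I expect Step~2 to be the main obstacle: it requires a small case analysis on the support of $\lambda$ using the explicit quadratic relations. That analysis is essentially the same computation that shows $\Gamma_2(\E_3)=\emptyset$, refined to bound $\dim N_2\le 1$ instead of forcing $N_2=0$.
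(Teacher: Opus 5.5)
There is a genuine gap in Step 2. You rule out $d_2=1$ (via $\Gamma_2(\E_3)=\emptyset$) and $d_2=2$ (via the bound $\dim N_2\le 1$), but you never treat $d_2=0$. So the Hilbert polynomial $1+t$ is not excluded, and ``Thus $d_1\neq 1$'' does not follow.

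This is not a cosmetic omission. Your own computation, done with the correct relations, shows that every cyclic module with one-dimensional degree-one part has $N_2=0$. So $1+t$ is exactly the case that survives. It also cannot be excluded by looking at $N$ alone, because $\E_3$ does have cyclic graded modules with Hilbert polynomial $1+t$. For instance, take $a\cdot v=w$, $b\cdot v=c\cdot v=0$ and $(\E_3)_1\cdot w=0$. Some input from the ambient module $M$ is needed.

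The missing step is to pass to the quotient, which is exactly how the paper handles $1+t$. If $H_N=1+t$, then $M/N$ is generated in degree zero by the one-dimensional space $M_0/N_0$. Hence $M/N$ is cyclic with Hilbert polynomial $(2+2t+2t^2)-(1+t)=1+t+2t^2$. Your bound $\dim N_2\le 1$ uses only that $N$ is cyclic with one-dimensional degree-one part. It therefore applies verbatim to $M/N$ and gives a contradiction.

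With this added, the proof is complete. It then differs from the paper's only in how $1+t+2t^2$ is ruled out: you do it by direct computation with the quadratic relations, whereas the paper quotients out a degree-two element to reach $1+t+t^2$ and cites $\Gamma_2(\E_3)=\emptyset$.

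A minor correction: in your second displayed relation the coefficient of $aw$ should be $-\lambda_c$, since $ac\cdot v=\lambda_c\,aw$, not $-\lambda_a$. This does not affect the bound $\dim N_2\le 1$.
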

\begin{proof}
A priori, the possible Hilbert polynomials are:
$$ 1,\ 1+t,\ 1+t+t^2,\ 1+t+2t^2,\ 1+2t+t^2,\ 1+2t+2t^2. $$
Notice that a proper graded submodule of $M$ generated in degree zero must be cyclic, hence the constant term in its Hilbert polynomial must be $1$; and that if a submodule contains $M_1$ then it must contain $M_2$, as $M$ is generated in degree zero, hence $1+2t+t^2$ is impossible.

By \cite{GMSW}, the Hilbert polynomial $1+t+t^2$ is eliminated. If $\E_3$ had a module generated in degree zero with Hilbert polynomial $1+t+2t^2$, it would further admit a quotient (dividing out by an arbitrary degree $2$ element) with Hilbert polynomial $1+t+t^2$, a contradiction.
If $N\leq M$ were a module with Hilbert polynomial $1+t$ then $M/N$ would have Hilbert polynomial $1+t+2t^2$ (still generated in degree zero).
We remain with the Hilbert polynomials $1$ and $1+2t+2t^2$.
\end{proof}

In what follows, we denote the left ideal generated by an element $a$ in an algebra $A$ by $(a)$.

\begin{lem}
\label{lem:cases}
    If $M$ is a graded module generated in degree zero 
    with Hilbert polynomial $1+2t+2t^2$ then $M \cong \left(\E_3/(\alpha a+\beta b+\gamma c)\right)_{\leq 2}$ 
    where: $$[\alpha\colon\beta\colon\gamma]\in \{[1\colon 0\colon 0],[0\colon 1\colon 0],[0\colon 0\colon 1]\} \cup \{[1\colon \theta\colon \theta^2]\ |\ \theta^3=-1\}.$$
\end{lem}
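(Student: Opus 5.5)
The plan is to identify $M$ with a truncation of $\E_3/L$ for a graded left ideal $L$, and then reduce everything to a rank computation in degree two.

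\emph{Setup.} Since $M$ is generated in degree zero with $\dim M_0=1$, it is cyclic: $M\cong \E_3/L$ in degrees $\leq 2$, for a graded left ideal $L$ with $L_0=0$.
\begin{itemize}
\item Because $\dim (\E_3)_1=3$ and $\dim M_1=2$, the component $L_1$ is spanned by a single nonzero element $\ell=\alpha a+\beta b+\gamma c$.
\item Hence $L_2\supseteq (\E_3)_1\ell$, so $M$ is a quotient of $\left(\E_3/(\ell)\right)_{\leq 2}$.
\item By the basis \eqref{Basis}, $(\E_3)_2$ has basis $ab,ac,ba,bc$, so $\dim(\E_3)_2=4$. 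Thus $\dim\left(\E_3/(\ell)\right)_2=4-\rank\,(\E_3)_1\ell$.
\end{itemize}
Since $\dim M_2=2$, the statement follows once we prove two things. First, $\rank\,(\E_3)_1\ell\geq 2$ for every nonzero $\ell$. Then $\dim(\E_3/(\ell))_2\leq 2$, which forces $L_2=(\E_3)_1\ell$, so $M\cong (\E_3/(\ell))_{\leq 2}$. Second, the rank equals exactly $2$ precisely for the listed points $[\alpha\colon\beta\colon\gamma]$.

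\emph{Degree-two computation.} Using the relations $a^2=b^2=c^2=0$, $ca=ab+bc$ and $cb=ac-ba$, we express $a\ell,b\ell,c\ell$ in the basis $ab,ac,ba,bc$:
\begin{align*}
a\ell&=\beta\, ab+\gamma\, ac,\\
b\ell&=\alpha\, ba+\gamma\, bc,\\
c\ell&=\alpha\, ab+\beta\, ac-\beta\, ba+\alpha\, bc.
\end{align*}
So we need the rank of the $3\times 4$ matrix with rows $(\beta,\gamma,0,0)$, $(0,0,\alpha,\gamma)$ and $(\alpha,\beta,-\beta,\alpha)$.

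\emph{Rank at least two.} The first two rows have disjoint supports. If both are nonzero, they are already independent. If, say, $a\ell=0$, then $\beta=\gamma=0$ and $\ell=a$; in that case $b\ell=ba$ and $c\ell=ab+bc$ are independent. The case $b\ell=0$ is handled the same way. This also shows that $\ell\in\{a,b,c\}$ gives rank exactly $2$, because some row vanishes or, for $\ell=c$, the rows $(0,1,0,0)$, $(0,0,0,1)$, $(0,0,0,0)$ span a plane. Hence the three coordinate points lie on the list.

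\emph{Rank exactly two, remaining case.} Now suppose $a\ell$ and $b\ell$ are both nonzero, hence independent. Rank $2$ then means $c\ell=x\,a\ell+y\,b\ell$ for some scalars $x,y$. Comparing coordinates gives
\[
\alpha=x\beta,\quad \beta=x\gamma,\quad -\beta=y\alpha,\quad \alpha=y\gamma.
\]
I expect that any vanishing coordinate forces a second one to vanish, so that $\ell$ is a multiple of one of $a,b,c$, a case already treated. With $\alpha\beta\gamma\neq 0$, normalize $\alpha=1$ and eliminate $x,y$. This should give $\gamma=\beta^2$ and $\beta^3=-1$, that is, $[1\colon\theta\colon\theta^2]$ with $\theta^3=-1$. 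We must also verify the converse, namely that these $\ell$ indeed give rank $2$.

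The main obstacle I expect is the bookkeeping in this elimination. One has to make sure no solutions are lost when some coordinates vanish but the corresponding row is still nonzero, for instance $\beta=0$ with $\gamma\neq 0$. I would handle those boundary cases by hand before dividing by any coordinate.
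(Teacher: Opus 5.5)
Your proposal is correct and follows the paper's route. Both reduce to $M\cong(\E_3/(\xi))_{\leq 2}$ via the one-dimensional $L_1$, write $a\xi,b\xi,c\xi$ in the basis $ab,ac,ba,bc$, and determine exactly when this family has rank $2$. The paper splits on $\gamma\neq 0$ versus $\gamma=0$ rather than on whether $a\ell$ or $b\ell$ vanishes, but the computations coincide. Your hedged eliminations do go through: any vanishing coordinate forces $\alpha=\beta=0$, and otherwise $x=\beta^{-1}$, $y=-\beta$, $\gamma=\beta^2$, $\beta^3=-1$. The paper also adds that $(\E_3/(\xi))_{\leq 2}\cong(\E_3/(\xi'))_{\leq 2}$ forces $[\xi]=[\xi']$. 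That is not part of this statement, but it is used later for the count in Theorem~\ref{thm:22}.
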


\begin{proof}
Since $M$ is cyclic, it must be isomorphic to a quotient of $\E_3$ by a homogeneous left ideal $L\leq \E_3$. The degree one part of $L$ must be one-dimensional, since $\dim_K(\E_3)_1 - \dim_K(L)_1 = \dim_K M_1=2$, so $M$ is a quotient of $\E_3/(\xi)$ for some $\xi = \alpha a+\beta b+\gamma c$, and not all $\alpha,\beta,\gamma$ are zero. 

We compute a basis for $\left(\E_3/(\xi)\right)_2$ as a $K$-vector space. 
Notice that $(\xi)_2=\Span_K\{a\xi,b\xi,c\xi\}$, where:
\begin{eqnarray*}
    a\xi & = & \beta ab + \gamma ac \\
    b\xi & = & \alpha ba + \gamma bc \\
    c\xi & = & \alpha ca + \beta cb = \alpha (ab+bc) + \beta (ac-ba)
\end{eqnarray*}
and with respect to the basis \eqref{Basis}, ordered $ab,ac,ba,bc$, $(\xi)_2$ is spanned by the columns of the matrix
\[
\Lambda_{\xi} =  \begin{pmatrix} 
    \beta & 0 & \alpha \\
    \gamma & 0 & \beta \\
    0 & \alpha & -\beta \\
    0 & \gamma & \alpha
\end{pmatrix}.
\]
Therefore, $\dim_K (\xi)_2 \in \{2,3\}$, and so $\dim_K \left(\E_3/(\xi)\right)_2 \in \{2,1\}$ (respectively). Since $\E_3/(\xi)\twoheadrightarrow M$ and $M_2$ is two-dimensional, it follows that $M\cong \left(\E_3/(\xi)\right)_{\leq 2}$. 

The possible linear forms $\xi$ for which $\left(\E_3/(\xi)\right)_{\leq 2}$ has Hilbert series $1+2t+2t^2$ (rather than $1+2t+t^2$) are those for which $\rank(\Lambda_\xi)=2$. Assume first that $\gamma\neq 0$. Then the first and second columns are linearly independent, so we must check when the third is a linear combination of them. This happens if and only if
\[
\left( \begin{matrix} \alpha \\ \beta \\ -\beta \\ \alpha \end{matrix} \right) = \left( \begin{matrix} \lambda \beta \\ \lambda \gamma \\ \mu \alpha \\ \mu \gamma \end{matrix} \right)
\]
for some $\lambda,\mu \in K$; namely, either $\alpha=\beta=0$, or else, $\alpha,\beta\neq 0$ and $\frac{\alpha}{\beta} = \lambda = \frac{\beta}{\gamma}$ and $-\frac{\beta}{\alpha}=\mu=\frac{\alpha}{\gamma}$. The latter case is equivalent to  $\beta^2=\alpha\gamma,\ \alpha^2=-\beta\gamma$ (while $\alpha,\beta,\gamma\neq 0$). This implies that $\beta^3=-\alpha^3$ (and $\gamma$ is uniquely determined by $\alpha,\beta$ in this case, namely, $\gamma = \frac{\beta^2}{\alpha} = -\frac{\alpha^2}{\beta}$). Therefore, when written in projective coordinates in $\mathbb{P}^2$, $\xi$ takes the form $[0\colon 0\colon 1]$ or $[1\colon \theta\colon \theta^2]$ where $\theta^3=-1$, namely, $\theta\in \{\zeta_6^{\pm 1},-1\}$. 

If $\gamma=0$ then the three columns of $\Lambda_\xi$ are linearly independent unless either $\alpha=0$ or $\beta=0$. These correspond to $\xi$ taking the form $[1\colon 0\colon 0]$ or $[0\colon 1\colon 0]$. Therefore, $M\cong \left(\E_3/(\xi)\right)_{\leq 2}$ for $\xi$ as in the assertion.

Finally, notice that in $\left(\E_3/(\xi)\right)_{\leq 2}$, the degree-one part of the annihilator of any non-zero degree zero element is $(\xi)_1$, so 
$\left(\E_3/(\xi)\right)_{\leq 2}\cong \left(\E_3/(\xi')\right)_{\leq 2}$ implies that $(\xi)_1=(\xi')_1$, which implies that $\xi=\lambda \xi'$ for some $\lambda\in K^{\times}$, so $\xi,\xi'$ represent the same point in $\mathbb{P}^2$.
\end{proof}

\begin{proof}[Proof of Theorem \ref{thm:22}]
Let $M=M_0\oplus M_1\oplus M_2$ be a module over $\E_3$, generated by $M_0$, with Hilbert polynomial $2+2t+2t^2$. We claim that $$M\cong \left(\E_3/(\xi)\right)_{\leq 2} \oplus K$$ where $K$ is the trivial module (in degree zero) and $\xi=\alpha a+\beta b+\gamma c$ where $$[\alpha\colon\beta\colon\gamma]\in \{[1\colon 0\colon 0],[0\colon 1\colon 0],[0\colon 0\colon 1]\} \cup \{[1\colon \theta\colon \theta^2]\ |\ \theta^3=-1\}.$$
Fix a basis $M_0=Ke_1 \oplus Ke_2$. 
Consider the graded cyclic submodules $\E_3\cdot e_1,\E_3\cdot e_2$. By Lemma \ref{lem:Possible Hilbert Polynomials}, their Hilbert polynomials are either $1$ or $1+2t+2t^2$. Since $M=\E_3\cdot e_1 + \E_3 \cdot e_2$, it is impossible that both have Hilbert polynomial $1$, so without loss of generality, let us assume that $H_{\E_3\cdot e_1}(t) = 1+2t+2t^2$.

By Lemma \ref{lem:cases}, it follows that $\E_3\cdot e_1\cong \left(\E_3/(\xi)\right)_{\leq 2}$ where $$[\alpha\colon\beta\colon\gamma]\in \{[1\colon 0\colon 0],[0\colon 1\colon 0],[0\colon 0\colon 1]\} \cup \{[1\colon \theta\colon \theta^2]\ |\ \theta^3=-1\}.$$


Suppose $\gamma\neq 0$. Then $ab,ba$ are linearly independent modulo $(\xi)$, since then any linear combination of the columns of $\Lambda_\xi$ from Lemma \ref{lem:cases} takes the form
\[
\left( \begin{matrix} \lambda \beta \\ \lambda \gamma \\ \mu \alpha \\ \mu \gamma \end{matrix} \right)
\]
and cannot have only the first and the third entries (these correspond to $ab,ba$) non-zero. Therefore, the following is a graded basis of $M$:
\[
M_0=\Span_K\{e_1,e_2\},\ M_1=\Span_K\{a\cdot e_1,b\cdot e_1\},\ M_2=\Span_K\{ab\cdot e_1,ba\cdot e_1\}.
\]
The Hilbert polynomial of the quotient module $M/\left(\E_3\cdot e_1\right)$ is $2+2t+2t^2-(1+2t+2t^2)=1$, so it is isomorphic to the trivial module $K$ in degree zero; hence $a\cdot e_2,b\cdot e_2,c\cdot e_2\in \Span_K\{a\cdot e_1,b\cdot e_1\}$. Write:
\begin{align}
\label{eq:1}a\cdot e_2 & = (\lambda_1 a + \mu_1 b)\cdot e_1, \\
\label{eq:2}b\cdot e_2 & = (\lambda_2 a + \mu_2 b)\cdot e_1, \\
\label{eq:3}c\cdot e_2 & = (\lambda_3 a + \mu_3 b)\cdot e_1
\end{align}
for some scalars $\lambda_i,\mu_i,1\leq i\leq 3$. 
Multiplying \eqref{eq:1} by $a$ and \eqref{eq:2} 
by $b$, we get that $\mu_1=0$ and $\lambda_2=0$, respectively.
Multiplying \eqref{eq:1} by $c$ we get $ca\cdot e_2=\lambda_1ca\cdot e_1$. Hence: 
\begin{equation} 
\label{eq:ca} 
\begin{aligned} 
ca\cdot e_2 &= \lambda_1ca\cdot e_1 = \lambda_1(ab+bc)\cdot e_1 \\ 
& = \lambda_1 ab\cdot e_1 + \lambda_1 b\left(-\frac{\alpha}{\gamma} a - \frac{\beta}{\gamma} b\right) \cdot e_1 = \left(\lambda_1 ab - \lambda_1\frac{\alpha}{\gamma}ba\right)\cdot e_1.
\end{aligned} 
\end{equation}
Multiplying \eqref{eq:1} by $b$ we get:
$$ ba \cdot e_2 = \lambda_1 ba\cdot e_1. $$
Multiplying \eqref{eq:2} by $c$, we get 
\begin{equation}
\label{eq:cb}
\begin{aligned}
    cb\cdot e_2 &= \mu_2 cb\cdot e_1 = \mu_2 (ac-ba)\cdot e_1 \\ 
    &= \mu_2 a\left(-\frac{\alpha}{\gamma} a-\frac{\beta}{\gamma} b\right)\cdot e_1 - \mu_2 ba \cdot e_1 = \left(-\mu_2\frac{\beta}{\gamma} ab -\mu_2 ba\right)\cdot e_1.
\end{aligned}
\end{equation}
Similarly, multiplying \eqref{eq:2} by $a$, we get:
$$ ab\cdot e_2 = \mu_2ab\cdot e_1. $$ 
Now multiplying \eqref{eq:3} by $a$ and $b$ we get
$$ ac\cdot e_2 = \mu_3 ab\cdot e_1,\ \ bc\cdot e_2 = \lambda_3 ba\cdot e_1,$$
respectively. 
Thus by \eqref{eq:ca}:
$$ \lambda_1 ab\cdot e_1  
- \lambda_1\frac{\alpha}{\gamma}ba\cdot e_1 = ca\cdot e_2 = (ab+bc)\cdot e_2 = \mu_2 ab\cdot e_1 + \lambda_3 ba\cdot e_1  $$
so $\lambda_1 = \mu_2$ and $\lambda_3 = -\lambda_1\frac{\alpha}{\gamma}$. 
By \eqref{eq:cb}:
$$ -\mu_2\frac{\beta}{\gamma} ab\cdot e_1 - \mu_2 ba\cdot e_1 = cb\cdot e_2 = (ac-ba)\cdot e_2 = \mu_3ab\cdot e_1 - \lambda_1 ba \cdot e_1 $$
so $-\mu_2\frac{\beta}{\gamma} = \mu_3$.
Substituting into \eqref{eq:1},\eqref{eq:2},\eqref{eq:3} we obtain:
\begin{eqnarray*} 
 a\cdot e_2 & = &  \lambda_1 a\cdot e_1,\\ b\cdot e_2 & = & \mu_2 b\cdot e_1=\lambda_1 b\cdot e_1, \\  c\cdot e_2 
 & = & \lambda_3 a\cdot e_1 + \mu_3 b\cdot e_1 = -\lambda_1\frac{\alpha}{\gamma}a\cdot e_1 - \mu_2\frac{\beta}{\gamma}b\cdot e_1 \\ & = & -\lambda_1\frac{\alpha}{\gamma}a\cdot e_1 - \lambda_1 \frac{\beta}{\gamma}b\cdot e_1 = \lambda_1c\cdot e_1
\end{eqnarray*}
so $$ a\cdot (e_2-\lambda_1 e_1)=b\cdot (e_2-\lambda_1 e_1)=c\cdot(e_2-\lambda_1 e_1)=0. $$ It follows that $\E_3\cdot (e_2-\lambda_1 e_1) \cong K\oplus 0 \oplus 0$ and thus $$ M=\E_3\cdot e_1 \oplus \E_3\cdot (e_2-\lambda_1 e_1)\cong \left(\E_3/(\xi)\right)_{\leq 2} \oplus K, $$ as claimed.

Recall that we made the assumption that $\gamma\neq 0$. We have an automorphism $\sigma\colon \E_3\rightarrow \E_3$ given by $\sigma(a)=b,\sigma(b)=c,\sigma(c)=-a$. 

Suppose that $M=M_0\oplus M_1\oplus M_2$ is an $\E_3$-module generated by $M_0$ with Hilbert polynomial $2+2t+2t^2$. Recall that we established that $M$ admits a degree zero element, say $e_1$, such that $\E_3\cdot e_1 \cong \left(\E_3/(\xi)\right)_{\leq 2}$ with $\xi=\alpha a+\beta b+\gamma c$ ($\xi\neq 0$). Suppose that $\gamma=0$ but $\alpha\neq 0$ or $\beta\neq 0$. Given a graded automorphism $\tau$ of $\E_3$, the module $\tau^*M$ has underlying vector space $M_0\oplus M_1\oplus M_2$ with twisted action $x\bullet m=\tau(x)\cdot m$. 
Now for $\tau=\sigma^{\pm 1}$ we see that in $\tau^*M$, 
\[
\E_3\bullet e_1\cong \left(\E_3/(\alpha' a+\beta' b+\gamma' c)\right)_{\leq 2}
\]
with $\gamma'\neq 0$. Hence the rest of the argument given above applies to $\tau^*M$, showing that it is isomorphic to $N \oplus K$ where $N$ is generated in degree $0$ and has Hilbert polynomial $1+2t+2t^2$, and $K$ is a trivial module in degree zero. Hence $$ M\cong {\tau^{-1}}^* \tau^* M\cong {\tau^{-1}}^* N \oplus {\tau^{-1}}^* K, $$ and since twisting preserves Hilbert polynomials, the result follows in any case.

Finally, notice that if $$ \left(\E_3/(\xi)\right)_{\leq 2} \oplus K \cong \left(\E_3/(\xi')\right)_{\leq 2} \oplus K $$ then any isomorphism between them must map the (unique) trivial degree zero submodule on the left hand side to that of the right hand side. Therefore $$ \left(\E_3/(\xi)\right)_{\leq 2} \cong \left(\E_3/(\xi')\right)_{\leq 2} $$
and, as mentioned before (see the end of the proof of Lemma \ref{lem:cases}), it follows that $\xi=\lambda \xi'$ for some $\lambda\in K^\times$. 
It follows that there is a bijective correspondence between $\Gamma_{2,2}(\E_3)$ and the set
$$\{[1\colon 0\colon 0],[0\colon 1\colon 0],[0\colon 0\colon 1]\} \cup \{[1\colon \theta\colon \theta^2]\ |\ \theta^3=-1\}.$$ 
If $\charac(K)=3$ then we only have four points: $[1\colon 0\colon 0],[0\colon 1\colon 0],[0\colon 0\colon 1],[1\colon -1\colon 1]$. If $\charac(K)\neq 3$, we have only four points if $K$ does not contain a primitive cubic root of unity, and six points otherwise (as $\theta$ ranges over $\{-1,\zeta_6,\zeta_6^{-1}\}$).


Finally, we conclude that there are no thick point modules over $\E_3$ of width $2$ of degree $>2$.  
Suppose that $M=M_0\oplus\cdots\oplus M_d$ ($d\geq 3$) is an $\E_3$-module generated in degree zero and with $\dim_K M_i=2$ for $0\leq i\leq 2$. Let $M_{\leq 2}=M_0\oplus M_1\oplus M_2$ be its degree-$2$ truncation. As we proved above, $M_{\leq 2}\cong \left(\E_3/(\xi)\right)_{\leq 2}\oplus K$. Fix generators $e_1,e_2\in M_0$ such that $\xi\cdot e_1=0$ and $a\cdot e_2,b\cdot e_2,c\cdot e_2=0$. It follows that $M$ is a quotient module of $\E_3/(\xi)\oplus K$ (here $K$ is a trivial module in degree zero). Recall from \eqref{Basis} that $(\E_3)_3$ is spanned by $aba,abc,bac$. As noted before, we may assume that $\gamma\neq 0$, so $abc,bac$ belong to $\Span_K\{aba\}$ modulo $(\xi)$. It follows that $\dim_K M_3 \leq \dim_K(\E_3/(\xi))_3\leq 1$, so $H_M(t)\neq 2+2t+2t^2+2t^3$.
\end{proof}

Theorem \ref{thm:22} suggests the following broader problem:

\begin{prob}
Describe the truncated thick point modules over Fomin--Kirillov algebras.
\end{prob}

 \bibliographystyle{abbrv}
 \bibliography{refs}

\begin{thebibliography}{10}

\bibitem{An2004}
N.~Andruskiewitsch.
\newblock Some remarks on {Nichols} algebras.
\newblock In {\em Hopf algebras. Proceedings from the international conference, DePaul University, Chicago, IL, USA held during the 2001--2002 academic year}, pages 35--45. New York, NY: Marcel Dekker, 2004.

\bibitem{zbMATH06797653}
N.~Andruskiewitsch.
\newblock On finite-dimensional {Hopf} algebras.
\newblock In {\em Proceedings of the international congress of mathematicians (ICM 2014), Seoul, Korea, August 13--21, 2014. Vol. II: Invited lectures}, pages 117--141. Seoul: KM Kyung Moon Sa, 2014.

\bibitem{ABFF2023}
N.~Andruskiewitsch, D.~Bagio, S.~D. Flora, and D.~Flores.
\newblock On the {Laistrygonian} {Nichols} algebras that are domains.
\newblock {\em J. Algebr. Comb.}, 58(2):549--568, 2023.

\bibitem{zbMATH07796386}
N.~Andruskiewitsch, G.~Carnovale, and G.~A. Garc{\'{\i}}a.
\newblock Finite-dimensional pointed {Hopf} algebras over finite simple groups of {Lie} type {VII}. {Semisimple} classes in {{\(\mathbf{PSL}_n (q)\)}} and {{\(\mathbf{PSp}_{2n}(q)\)}}.
\newblock {\em J. Algebra}, 639:354--397, 2024.

\bibitem{zbMATH05896174}
N.~Andruskiewitsch, F.~Fantino, G.~A. Garc{\'{\i}}a, and L.~Vendramin.
\newblock On {Nichols} algebras associated to simple racks.
\newblock In {\em Groups, algebras and applications. XVIII Latin American algebra colloquium, S\~ao Pedro, Brazil, August 3--8, 2009. Proceedings.}, pages 31--56. Providence, RI: American Mathematical Society (AMS), 2011.

\bibitem{zbMATH05903856}
N.~Andruskiewitsch, F.~Fantino, M.~Gra{\~n}a, and L.~Vendramin.
\newblock Finite-dimensional pointed {Hopf} algebras with alternating groups are trivial.
\newblock {\em Ann. Mat. Pura Appl. (4)}, 190(2):225--245, 2011.

\bibitem{zbMATH05869018}
N.~Andruskiewitsch, F.~Fantino, M.~Gra{\~n}a, and L.~Vendramin.
\newblock Pointed {Hopf} algebras over the sporadic simple groups.
\newblock {\em J. Algebra}, 325(1):305--320, 2011.

\bibitem{Andruskiewitsch2003}
N.~Andruskiewitsch and M.~Gra{\~n}a.
\newblock From racks to pointed {Hopf} algebras.
\newblock {\em Adv. Math.}, 178(2):177--243, 2003.

\bibitem{arXiv:2411.02304}
N.~Andruskiewitsch, I.~Heckenberger, and L.~Vendramin.
\newblock Pointed {Hopf} algebras of odd dimension and {Nichols} algebras over solvable groups.
\newblock Preprint, {arXiv}:2411.02304 [math.{QA}] (2024), 2024.

\bibitem{zbMATH01247621}
N.~Andruskiewitsch and H.-J. Schneider.
\newblock Lifting of quantum linear spaces and pointed {Hopf} algebras of order {{\(p^3\)}}.
\newblock {\em J. Algebra}, 209(2):658--691, 1998.

\bibitem{zbMATH06224213}
I.~Angiono.
\newblock On {Nichols} algebras of diagonal type.
\newblock {\em J. Reine Angew. Math.}, 683:189--251, 2013.

\bibitem{ATV}
M.~Artin, J.~Tate, and M.~Van~den Bergh.
\newblock Some algebras associated to automorphisms of elliptic curves.
\newblock The {Grothendieck} {Festschrift}, {Collect}. {Artic}. in {Honor} of the 60th {Birthday} of {A}. {Grothendieck}. {Vol}. {I}, {Prog}. {Math}. 86, 33-85 (1990)., 1990.

\bibitem{ATV2}
M.~Artin, J.~Tate, and M.~Van~den Bergh.
\newblock Modules over regular algebras of dimension 3.
\newblock {\em Invent. Math.}, 106(2):335--388, 1991.

\bibitem{AZ2}
M.~Artin and J.~J. Zhang.
\newblock Noncommutative projective schemes.
\newblock {\em Adv. Math.}, 109(2):228--287, 1994.

\bibitem{AZ}
M.~Artin and J.~J. Zhang.
\newblock Abstract {Hilbert} schemes. {I}.
\newblock {\em Algebr. Represent. Theory}, 4(4):305--394, 2001.

\bibitem{BG}
J.~P. Bell and B.~Greenfeld.
\newblock Noncommutative point spaces of symbolic dynamical systems.
\newblock {\em Adv. Math.}, 469:50, 2025.
\newblock Id/No 110211.

\bibitem{zbMATH01077111}
W.~Bosma, J.~Cannon, and C.~Playoust.
\newblock The {Magma} algebra system. {I}: {The} user language.
\newblock {\em J. Symb. Comput.}, 24(3-4):235--265, 1997.

\bibitem{Chan_fat}
D.~Chan.
\newblock Twisted rings and moduli stacks of ``fat'' point modules in non-commutative projective geometry.
\newblock {\em Adv. Math.}, 229(4):2184--2209, 2012.

\bibitem{zbMATH05942621}
G.~A. Garc{\'{\i}}a and A.~Garc{\'{\i}}a~Iglesias.
\newblock Finite-dimensional pointed {Hopf} algebras over {{\(\mathbb S_4\)}}.
\newblock {\em Isr. J. Math.}, 183:417--444, 2011.

\bibitem{Goetz2007}
P.~Goetz.
\newblock Fat point modules over generalized {Laurent} polynomial rings.
\newblock {\em J. Algebra}, 313(2):657--671, 2007.

\bibitem{zbMATH02100251}
M.~Gra{\~n}a.
\newblock Indecomposable racks of order {{\(p^2\)}}.
\newblock {\em Beitr. Algebra Geom.}, 45(2):665--676, 2004.

\bibitem{zbMATH05918254}
M.~Gra{\~n}a, I.~Heckenberger, and L.~Vendramin.
\newblock Nichols algebras of group type with many quadratic relations.
\newblock {\em Adv. Math.}, 227(5):1956--1989, 2011.

\bibitem{GMSW}
B.~Greenfeld, S.~Mathison, A.~Saini, and S.~Wynn.
\newblock Modules over {Fomin}-{Kirillov} algebras and their subalgebras.
\newblock Preprint, {arXiv}:2410.16578 [math.{RA}] (2024), 2024.

\bibitem{zbMATH05027328}
I.~Heckenberger.
\newblock The {Weyl} groupoid of a {Nichols} algebra of diagonal type.
\newblock {\em Invent. Math.}, 164(1):175--188, 2006.

\bibitem{zbMATH05376870}
I.~Heckenberger.
\newblock Classification of arithmetic root systems.
\newblock {\em Adv. Math.}, 220(1):59--124, 2009.

\bibitem{zbMATH06050325}
I.~Heckenberger, A.~Lochmann, and L.~Vendramin.
\newblock Braided racks, {Hurwitz} actions and {Nichols} algebras with many cubic relations.
\newblock {\em Transform. Groups}, 17(1):157--194, 2012.

\bibitem{zbMATH07837859}
I.~Heckenberger, E.~Meir, and L.~Vendramin.
\newblock Finite-dimensional {Nichols} algebras of simple {Yetter}-{Drinfeld} modules (over groups) of prime dimension.
\newblock {\em Adv. Math.}, 444:30, 2024.
\newblock Id/No 109637.

\bibitem{zbMATH06694062}
I.~Heckenberger and L.~Vendramin.
\newblock A classification of {Nichols} algebras of semisimple {Yetter}-{Drinfeld} modules over non-abelian groups.
\newblock {\em J. Eur. Math. Soc. (JEMS)}, 19(2):299--356, 2017.

\bibitem{zbMATH06736624}
I.~Heckenberger and L.~Vendramin.
\newblock The classification of {Nichols} algebras over groups with finite root system of rank two.
\newblock {\em J. Eur. Math. Soc. (JEMS)}, 19(7):1977--2017, 2017.

\bibitem{RRZ}
Z.~Reichstein, D.~Rogalski, and J.~J. Zhang.
\newblock Projectively simple rings.
\newblock {\em Adv. Math.}, 203(2):365--407, 2006.

\bibitem{Rogalski}
D.~Rogalski.
\newblock Noncommutative projective geometry.
\newblock In {\em Noncommutative algebraic geometry. Lecture notes based on courses given at the Summer Graduate School at the Mathematical Sciences Research Institute (MSRI), Berkeley, CA, USA, June 2012}, pages 13--70. Cambridge: Cambridge University Press, 2016.

\bibitem{RZ}
D.~Rogalski and J.~J. Zhang.
\newblock Canonical maps to twisted rings.
\newblock {\em Math. Z.}, 259(2):433--455, 2008.

\bibitem{SheltonVancliff2002a}
B.~Shelton and M.~Vancliff.
\newblock Schemes of line modules. {I}.
\newblock {\em J. Lond. Math. Soc. (2)}, 65(3):575--590, 2002.

\bibitem{SheltonVancliff2002b}
B.~Shelton and M.~Vancliff.
\newblock Schemes of line modules. {II}.
\newblock {\em Comm. Algebra}, 30(5):2535--2552, 2002.

\bibitem{SW}
S.~J. Sierra and C.~Walton.
\newblock The universal enveloping algebra of the {Witt} algebra is not {Noetherian}.
\newblock {\em Adv. Math.}, 262:239--260, 2014.

\bibitem{Smith}
S.~P. Smith.
\newblock The space of {Penrose} tilings and the noncommutative curve with homogeneous coordinate ring {{\(k{{\langle}} x,y {{\mathrm{ran}gle}}/(y^2)\)}}.
\newblock {\em J. Noncommut. Geom.}, 8(2):541--586, 2014.

\bibitem{Tao}
T.~Tao.
\newblock An uncertainty principle for cyclic groups of prime order.
\newblock {\em Math. Res. Lett.}, 12(1):121--127, 2005.

\bibitem{Vancliff2024}
M.~Vancliff.
\newblock An example of a quadratic {AS}-regular algebra without any point modules.
\newblock In {\em Recent advances in noncommutative algebra and geometry. Conference on recent advances and new directions in the interplay of noncommutative algebra and geometry in honor of S. Paul Smith on the occasion of his 65th birthday, University of Washington, Seattle, WA, June 20--24, 2022}, pages 255--259. Providence, RI: American Mathematical Society (AMS), 2024.

\bibitem{zbMATH06204244}
L.~Vendramin.
\newblock Nichols algebras associated to the transpositions of the symmetric group are twist-equivalent.
\newblock {\em Proc. Am. Math. Soc.}, 140(11):3715--3723, 2012.

\end{thebibliography}

\end{document}